\theoremstyle{plain}
\newtheorem{algorithm}{Algorithm}[section]
\newtheorem{thm}{Thm}
\newtheorem{examplelet}[thm]{Example}
\newtheorem{lemma}[algorithm]{Lemma}
\newtheorem{theorem} [algorithm] {Theorem}
\newtheorem{theoremlet}[thm]{Theorem}
\newtheorem{definitionlet}[thm]{Definition}
\newtheorem*{remarknonum}{Remark}
\newtheorem{proposition}[algorithm]{Proposition}
\numberwithin{equation}{algorithm}
\begin{document}
\title{The Sub-Index of Critical Points of Distance Functions}
\author{Barbara Herzog }
\address{Department of Mathematics, Doane College}
\email{barbara.herzog@doane.edu}
\urladdr{http://www.doane.edu/barbara-herzog}
\author{Frederick Wilhelm}
\address{Department of Mathematics, Univ.of Calf., Riverside}
\email{fred@math.ucr.edu }
\urladdr{https://sites.google.com/site/frederickhwilhelmjr/}
\date{August 28, 2014}

\begin{abstract}
We define a new notion---the sub-index of a critical point of a distance
function. We show how sub-index affects the homotopy type of sublevel sets
of distance functions.
\end{abstract}

\subjclass{53C20}
\keywords{Distance functions, Critical points, Index}
\maketitle

Morse Theory is based on the idea that a smooth function on a manifold
yields data about its topology. Specifically, Morse's Isotopy Lemma tells us
that two sublevels are diffeomorphic provided there are no critical points
between their corresponding levels. Further, the index of the Hessian of the
function constrains the change in homotopy type caused by a critical point.

Since Riemannian distance functions are not smooth everywhere, critical
points and the Hessian cannot be defined in the usual way. In 1977, Grove
and Shiohama created a definition of critical point for distance functions
and used it to generalize Morse's Isotopy Lemma to this case. Their
generalization had a profound impact on Riemannian geometry. However,
without a definition of index, the remainder of Morse Theory cannot be
generalized. Here we propose a partial remedy to this situation. Before
stating it, we recall the definition of critical points.

\begin{definitionlet}
Let $M$ be a complete Riemannian $n$--manifold. For $x_{0}\in M,$ let $%
S_{x_{0}}$ be the unit tangent sphere at $x_{0}$ and $K\subset M$ be
compact. Set 
\begin{equation*}
\Uparrow _{x_{0}}^{K}\equiv \left\{ w\in S_{x_{0}}\ |\ w\text{ is tangent at 
}x_{0}\text{ to a minimal geodesic from }x_{0}\text{ to }K\right\} .
\end{equation*}%
A point $x_{0}$ in $M$ is \emph{regular} for $\mathrm{dist}\left( K,\cdot
\right) $ if there exists a $v$ in $T_{x_{0}}M$ such that $\sphericalangle
(v,\Uparrow _{x_{0}}^{K})>\frac{\pi }{2}$. Otherwise, $x_{0}$ is called 
\emph{critical} for $\mathrm{dist}\left( K,\cdot \right) .$
\end{definitionlet}

Since the set $\Uparrow _{x_{0}}^{K}$ can be quite unwieldy, for critical
points $x_{0}$ of $\mathrm{dist}\left( K,\cdot \right) $ we consider%
\begin{eqnarray*}
A\left( \Uparrow _{x_{0}}^{K}\right) &\equiv &\left\{ v\in
S_{x_{0}}|\sphericalangle \left( v,\Uparrow _{x_{0}}^{K}\right) \geq \frac{%
\pi }{2}\right\} \vspace*{0.05in} \\
&=&\left\{ v\in S_{x_{0}}|\sphericalangle \left( v,\Uparrow
_{x_{0}}^{K}\right) =\frac{\pi }{2}\right\} .
\end{eqnarray*}

Since $A(\Uparrow _{x_{0}}^{K})$ is an intersection of hemispheres in $%
S_{x_{0}},$ $A(\Uparrow _{x_{0}}^{K})$ is convex. In particular, if $%
\partial A(\Uparrow _{x_{0}}^{K})=\emptyset ,$ then $A(\Uparrow
_{x_{0}}^{K}) $ is a great subsphere of $S_{x_{0}}.$ Motivated by these
observations we define sub-index as follows.

\begin{definitionlet}
Let $x_{0}$ be a critical point of $\mathrm{dist}\left( K,\cdot \right) .$
The sub-index of $x_{0}$ is 
\begin{equation*}
\begin{array}{ll}
n & \hspace*{0.25in}\text{if }A(\Uparrow _{x_{0}}^{K})=\emptyset \vspace*{%
0.05in} \\ 
n-\dim \mathrm{span}\left\{ A(\Uparrow _{x_{0}}^{K})\right\} & \hspace*{%
0.25in}\text{if }A(\Uparrow _{x_{0}}^{K})\neq \emptyset \text{ and }\partial
A(\Uparrow _{x_{0}}^{K})=\emptyset \vspace*{0.05in} \\ 
\infty & \hspace*{0.25in}\text{if }\partial A(\Uparrow _{x_{0}}^{K})\neq
\emptyset .%
\end{array}%
\end{equation*}
\end{definitionlet}

For $r\in \mathbb{R},$ we set 
\begin{equation*}
B\left( K,r\right) \equiv \left\{ \left. x\in M\right\vert \text{ \textrm{%
dist}}\left( K,x\right) <r\right\} .
\end{equation*}

\begin{theoremlet}
\label{mainthm}Suppose that the critical points for $\mathrm{dist}\left(
K,\cdot \right) $ are isolated and that those with $\mathrm{dist}\left(
K,x_{0}\right) =c_{0}$ have sub-index $\geq \lambda .$ Then for all
sufficiently small $\varepsilon >0$, the inclusion $B\left(
K,c_{0}-\varepsilon \right) \hookrightarrow B\left( K,c_{0}+\varepsilon
\right) $ is $(\lambda -1)$-connected, that is, 
\begin{equation*}
\pi _{i}(B\left( K,c_{0}+\varepsilon \right) ,\text{ }B\left(
K,c_{0}-\varepsilon \right) )=0
\end{equation*}%
for $i=1,\ldots ,(\lambda -1)$.
\end{theoremlet}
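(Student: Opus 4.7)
The plan is to mimic the classical Morse-theoretic cell-attachment argument: show that $B(K,c_{0}+\varepsilon )$ deformation retracts onto $B(K,c_{0}-\varepsilon )\cup \bigcup _{j}D_{j}$, where each $D_{j}$ is a topological cell of dimension equal to the sub-index $\lambda _{j}\geq \lambda $ of the critical point $x_{j}$, attached along $\partial D_{j}\subset B(K,c_{0}-\varepsilon )$. Since attaching cells of dimension $\geq \lambda $ produces a $(\lambda -1)$-connected inclusion, this gives the theorem. Isolation of the critical points, together with compactness of a closed annulus about $c_{0}$, reduces matters to finitely many critical points $x_{1},\ldots ,x_{m}$ at level $c_{0}$; I shrink $\varepsilon $ so that no other critical points lie in $[c_{0}-\varepsilon ,c_{0}+\varepsilon ]$.

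\textbf{Identifying the cells.} At each $x_{j}$ of finite sub-index $\lambda _{j}$, set $V_{j}:=\mathrm{span}(\Uparrow _{x_{j}}^{K})\subset T_{x_{j}}M$. The definition of $A(\Uparrow _{x_{j}}^{K})$ combined with the hypothesis $\partial A(\Uparrow _{x_{j}}^{K})=\emptyset $ yields $A(\Uparrow _{x_{j}}^{K})=S_{x_{j}}\cap V_{j}^{\perp }$, so $\dim V_{j}=\lambda _{j}$; equivalently, $0$ lies in the relative interior of the convex hull of $\Uparrow _{x_{j}}^{K}$ inside $V_{j}$. Compactness of $\Uparrow _{x_{j}}^{K}$ then produces $c_{j}>0$ with $\max _{w\in \Uparrow _{x_{j}}^{K}}\langle v,w\rangle \geq c_{j}$ for every unit $v\in V_{j}$. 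I take $D_{j}:=\exp _{x_{j}}\{v\in V_{j}:|v|\leq \rho _{j}\}$, a $\lambda _{j}$-disk through $x_{j}$. By the first variation formula, for $v\in V_{j}$ of small norm one has $\mathrm{dist}(K,\exp _{x_{j}}(v))\leq c_{0}-c_{j}|v|+O(|v|^{2})$, so $\partial D_{j}\subset B(K,c_{0}-\varepsilon )$ once $\rho _{j}$ is fixed small and $\varepsilon <c_{j}\rho _{j}/2$.

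\textbf{The retraction and main obstacle.} The remaining task is to construct the deformation retract. Outside small balls $B(x_{j},\eta )$, regularity together with compactness furnish a Grove--Shiohama-type vector field whose flow strictly decreases $\mathrm{dist}(K,\cdot )$ and pushes the exterior region into $B(K,c_{0}-\varepsilon )$ in bounded time. Inside each $B(x_{j},\eta )$ I would use the straight-line homotopy $\exp _{x_{j}}(u_{V}+u_{V^{\perp }})\mapsto \exp _{x_{j}}(u_{V}+(1-t)u_{V^{\perp }})$ in normal coordinates, which projects the $V_{j}^{\perp }$-component to zero and lands on $D_{j}$ at $t=1$; the expansion $\mathrm{dist}(K,\exp _{x_{j}}(u))=c_{0}-\max _{w\in \Uparrow _{x_{j}}^{K}}\langle u,w\rangle +O(|u|^{2})$ has first-order term depending only on $u_{V}$, so this homotopy stays inside $B(K,c_{0}+\varepsilon )$ for small $\eta $. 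The hardest step is splicing these two deformations along $\partial B(x_{j},\eta )$, and this is where the uniform lower bound $c_{j}>0$---which is exactly what the hypothesis $\partial A(\Uparrow _{x_{j}}^{K})=\emptyset $ provides---is indispensable: it guarantees that off a thin $V_{j}^{\perp }$-tube around $D_{j}$, the $V_{j}$-component of $u$ already forces $\mathrm{dist}(K,\exp _{x_{j}}(u))<c_{0}-\varepsilon $, so the exterior Grove--Shiohama flow and the interior straight-line projection can be combined by a standard partition-of-unity argument. Once the global retract is built, $(B(K,c_{0}+\varepsilon ),B(K,c_{0}-\varepsilon ))$ is a relative CW complex whose cells have dimension $\lambda _{j}\geq \lambda $, yielding $\pi _{i}=0$ for $i\leq \lambda -1$.
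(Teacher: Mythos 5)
Your plan is genuinely different from the paper's: you aim to build an explicit relative CW structure on $(B(K,c_0+\varepsilon),B(K,c_0-\varepsilon))$ with one $\lambda_j$-cell at each critical point, whereas the paper never attaches cells --- it shows directly that every map $(E^k,\partial E^k)\to(B(K,c_0+\varepsilon),B(K,c_0-\varepsilon))$ with $k\leq\lambda-1$ can be compressed rel boundary into $B(K,c_0-\varepsilon)$. If your construction worked it would prove strictly more (it would give the full $H_i$ vanishing in degrees $\neq\lambda_j$), which is exactly the statement the authors explicitly disclaim in the remark following Theorem A ("We have no analogous result about relative homology in dimensions larger than $\lambda$..."). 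That remark is a warning sign.

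The concrete gap is the case $\partial A(\Uparrow_{x_0}^K)\neq\emptyset$ (sub-index $\infty$), which your argument silently excludes when you restrict to "$x_j$ of finite sub-index." The hypothesis "sub-index $\geq\lambda$" permits, and typically includes, such points. In that case your key estimate fails: with $V_j=\mathrm{span}(\Uparrow_{x_j}^K)$ it is no longer true that $\max_{w\in\Uparrow}\langle v,w\rangle\geq c_j>0$ for every unit $v\in V_j$ --- that positivity is exactly equivalent to $\partial A=\emptyset$, since it says the polar cone of $\Uparrow$ in $V_j$ is trivial. (Example: $\Uparrow=\{e_1,e_2\}\subset S^1$, $V_j=\mathbb{R}^2$, but $v=-(e_1+e_2)/\sqrt2$ has $\max_w\langle v,w\rangle<0$.) So there is no candidate "descending disk" $D_j$, and you have given no mechanism to show the inclusion is a weak homotopy equivalence in this case. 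This is not a corner case: the entire flow construction of Theorem \ref{solu flow thm}, occupying most of Section \ref{flow section}, exists precisely to handle it via the soul vector $w_s$.

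Two further issues, fixable but real. First, $\mathrm{dist}_K$ is not $C^1$ (let alone $C^2$) at $x_0$, so the expansion $c_0-\max_w\langle u,w\rangle+O(|u|^2)$ is not available; what you actually have is the one-sided, $\varepsilon$-linear bound of Lemma \ref{Taylorpolylemma}, $c_0+(-\cos\sphericalangle(v,\Uparrow)\pm\varepsilon)|u|$ for $|u|\leq\rho(\varepsilon)$, and you must run the argument with that. Second, "splice by a standard partition of unity" undersells the difficulty: your interior straight-line homotopy $u\mapsto u_V+(1-t)u_{V^\perp}$ does not decrease $\mathrm{dist}_K$ monotonically and does not fix $B(K,c_0-\varepsilon)\cap B(x_j,\eta)$, so it is not a strong deformation retract onto $D_j\cup B(K,c_0-\varepsilon)$, and matching it to the exterior gradient-like flow across $\partial B(x_j,\eta)$ is exactly the work the paper does via the Local Reduction Lemma (Lemma \ref{localreduction}), the Radial Homotopy, and the Angle Homotopy built from Proposition \ref{Euclidean angle isotopy prop}; note also that the paper's Case 2 avoids the complementary cone $C_{2r}A(\Uparrow_{x_0}^K)$ (the "neutral" set of dimension $n-\lambda$) rather than retracting onto your descending disk, and this dual choice is what makes the splicing tractable there.
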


\begin{remarknonum}
When all critical points with $\mathrm{dist}\left( K,x_{0}\right) =c_{0}$
have $\partial A(\Uparrow _{x_{0}}^{K})\neq \emptyset ,$ $\lambda $ is $%
\infty ,$ and the theorem asserts that $B\left( K,c_{0}-\varepsilon \right)
\hookrightarrow B\left( K,c_{0}+\varepsilon \right) $ is a weak homotopy
equivalence.
\end{remarknonum}

This extends an aspect of the classical result of Morse Theory about
nondegenerate critical points of smooth functions $f:M\longrightarrow 
\mathbb{R}.$ To see why, set $M^{r}\equiv \left\{ x\in M|f\left( x\right)
\leq r\right\} .$ According to the classical result of Morse Theory, if $%
x_{0}$ is a nondegenerate critical point of $f$ of index $\lambda $, then $%
M^{r+\varepsilon }$ has the homotopy type of a $CW$--complex obtained from $%
M^{r-\varepsilon }$ by attaching a $\lambda $--cell. This implies that%
\begin{equation*}
\pi _{i}(M^{r+\varepsilon },\text{ }M^{r-\varepsilon })=0\text{ for all }%
i\leq \lambda -1.
\end{equation*}%
Our theorem recovers this aspect of Morse Theory. On the other hand, in the
case of a smooth function, 
\begin{equation*}
H_{i}(M^{r+\varepsilon },\text{ }M^{r-\varepsilon })=0\text{ for all }i\neq
\lambda .
\end{equation*}%
We have no analogous result about relative homology in dimensions larger
than $\lambda $ for distance functions. For this reason, we call our notion
sub-index rather than index.

Our hypothesis that the critical points for $\mathrm{dist}\left( K,\cdot
\right) $ be isolated is implicitly present in the classical result, since
nondegenerate critical points of smooth functions are isolated.

Our theory yields a very rigid structure for critical points that impact the
fundamental group.

\begin{theoremlet}
\label{thm2}Suppose that the critical points for $\mathrm{dist}\left(
K,\cdot \right) $ are isolated and that for some $c_{0}>0$ and all
sufficiently small $\varepsilon >0,$ 
\begin{equation*}
\pi _{1}(B\left( K,c_{0}+\varepsilon \right) ,B\left( K,c_{0}-\varepsilon
\right) )\neq 0.
\end{equation*}

Then there is a critical point $x_{0}$ for $\mathrm{dist}\left( K,\cdot
\right) $ with $\mathrm{dist}\left( K,x_{0}\right) =c_{0}$ so that there are
only two minimal geodesics from $K$ to $x_{0}$ that make angle $\pi $ at $%
x_{0}$. Moreover, the ends of these geodesic segments are not conjugate
along the segments.
\end{theoremlet}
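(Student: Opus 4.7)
The plan is to argue by contrapositive of Theorem~\ref{mainthm}, translate ``sub-index $\leq 1$'' into explicit geometric data at the critical point, and then rule out conjugate endpoints via an isolation argument.

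First I would apply the contrapositive of Theorem~\ref{mainthm} with $\lambda=2$: the hypothesis $\pi_1(B(K,c_0+\varepsilon),B(K,c_0-\varepsilon))\neq 0$ produces a critical point $x_0$ at level $c_0$ whose sub-index is at most $1$. Sub-index $\infty$ is excluded, so $\partial A(\Uparrow_{x_0}^{K})=\emptyset$; sub-index $n$ is excluded since $n\geq 2$; and sub-index $0$ is impossible, because it would require $\mathrm{span}\{A\}=T_{x_0}M$ and hence $A=S_{x_0}$, while every $w\in\Uparrow_{x_0}^{K}$ satisfies $\sphericalangle(w,w)=0\neq\pi/2$, giving $A\cap\Uparrow_{x_0}^{K}=\emptyset$ and contradicting $\Uparrow_{x_0}^{K}\neq\emptyset$. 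Hence sub-index equals $1$ exactly, and writing $W:=\mathrm{span}\{A\}$, the set $A=S_{x_0}\cap W$ is a great $(n-2)$-sphere.

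Next I would decode this. Let $e_1$ span the line $W^\perp$. Since every $v\in A$ satisfies $\sphericalangle(v,w)=\pi/2$ for all $w\in\Uparrow_{x_0}^{K}$, we have $\Uparrow_{x_0}^{K}\subseteq W^\perp\cap S_{x_0}=\{+e_1,-e_1\}$. If only one of $\pm e_1$ were in $\Uparrow_{x_0}^{K}$, then $A$ would reduce to a closed hemisphere with nonempty boundary, contradicting $\partial A=\emptyset$. Therefore $\Uparrow_{x_0}^{K}=\{+e_1,-e_1\}$, which produces exactly two minimal geodesics $\gamma_1,\gamma_2$ from $K$ to $x_0$, meeting at angle $\pi$ at $x_0$.

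For the non-conjugacy conclusion, suppose for contradiction that $x_0$ is conjugate to $\gamma_1(0)$ along $\gamma_1$, and let $J$ be a nonzero Jacobi field along $\gamma_1$ with $J(0)=J(c_0)=0$. Since $J\perp\dot\gamma_1$, both $J'(0)$ and $J'(c_0)$ are nonzero vectors lying in $W$. Exponentiate the family $s\mapsto sJ'(0)$ at $\gamma_1(0)\in K$ to obtain geodesics $\gamma_s$ of length $c_0$, with endpoints $y_s:=\gamma_s(c_0)=x_0+O(s^2)$ whose leading second-order displacement is in the direction $J'(c_0)\in W\setminus\{0\}$. Thus $y_s\neq x_0$ for small $s\neq 0$, and $\mathrm{dist}(K,y_s)\leq c_0$ via $\gamma_s$. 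I would then show that each such $y_s$ is itself critical for $\mathrm{dist}(K,\cdot)$: the direction of $\gamma_s$ at $y_s$ (or, if $\gamma_s$ fails to be minimal, the direction of the branching geodesic produced by the focal structure) supplies an element of $\Uparrow_{y_s}^{K}$ close to $+e_1$, while perturbing $\gamma_2$ supplies, by continuity at its non-degenerate end, an element close to $-e_1$. Two such nearly antipodal directions force $y_s$ to be critical, producing a sequence of distinct critical points accumulating at $x_0$ and contradicting isolation.

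The hard part will be the last step, specifically verifying that $y_s$ is critical. The subtlety is to produce a genuine element of $\Uparrow_{y_s}^{K}$ close to $+e_1$: if $\gamma_s$ itself fails to be minimal from $K$ to $y_s$, one must invoke the fold/branching structure of the normal exponential map near the focal point $x_0$ to locate an alternative minimal geodesic with the required direction. This is precisely where the non-conjugacy content of the theorem lives, and a careful second-variation or focal-Jacobi analysis should close the gap.
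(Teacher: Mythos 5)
Your reduction to a sub-index-one critical point and to $\Uparrow_{x_0}^{K}=\{+e_1,-e_1\}$ is correct and matches the paper; the paper arrives at the antipodal pair directly from Theorem~\ref{mainthm} and the definition of sub-index, exactly as you do. The divergence, and the gap, is in the non-conjugacy step, where you argue against \emph{isolation} while the paper argues against the \emph{topological} hypothesis.

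The step that fails is the claim that each $y_s$ is critical. To be critical, $y_s$ must have the property that for \emph{every} $v\in S_{y_s}$ there is a $w\in\Uparrow_{y_s}^{K}$ with $\sphericalangle(v,w)\le \pi/2$. Two \emph{nearly} antipodal elements $w_1,w_2$ of $\Uparrow_{y_s}^{K}$ do not give this: if $\sphericalangle(w_1,w_2)=\pi-\delta$ with $\delta>0$, the vector pointing in the direction $-\left(\frac{w_1+w_2}{|w_1+w_2|}\right)$ makes angle $\frac{\pi}{2}+\frac{\delta}{2}>\frac{\pi}{2}$ with both $w_1$ and $w_2$, so $y_s$ is \emph{regular} unless $\Uparrow_{y_s}^{K}$ contains additional directions. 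Nothing in the Jacobi-field perturbation forces the two produced directions to be exactly antipodal, nor forces additional directions into $\Uparrow_{y_s}^{K}$, so the contradiction with isolation is not reached. There are also secondary issues you yourself flag (whether $\gamma_s$ is actually minimal past the conjugate parameter; whether $\mathrm{dist}(K,y_s)$ stays at level $c_0$), but the antipodality point is the one that cannot be patched by ``continuity at a non-degenerate end.'' More structurally: your route, if it worked, would prove that an isolated sub-index-one critical point can never have a conjugate end, which is a stronger statement than Theorem~\ref{thm2} and one the paper does not claim; the hypothesis $\pi_1\neq 0$ is doing essential work.

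The paper instead assumes conjugacy and then shows that every $1$--cell can still be pushed into $B(K,c_0)$, giving $\pi_1(B(K,c_0+\varepsilon),B(K,c_0-\varepsilon))=0$ and a contradiction. The engine is Lemma~\ref{Hlemma}: along directions in $\mathcal{K}=\ker(d\exp_{x_0})_v$ the index form can be pushed to $-\infty$, so the distance function still decreases to second order there. Consequently the ``bad'' set one must avoid is not all of the subspace containing $\Uparrow_{x_0}^{K}$ but only $\exp_{x_0}(\mathcal{K}^{\perp}\cap B(0,r))$, which has dimension $\le n-2$; transversality for the $1$--cell then applies exactly as in Case~2 of Theorem~\ref{mainthm}, combined with Lemma~\ref{case 2 Lemma}. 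If you want to complete your proof along the paper's lines, replace the ``produce nearby critical points'' step with: under conjugacy, use Lemma~\ref{Hlemma} to show the deformation retraction of Lemma~\ref{case 2 Lemma} works off $\exp_{x_0}(\mathcal{K}^{\perp})$, and move the $1$--cell off that set by transversality since $1+(n-2)<n$.
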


The theory of sub-index is beautifully exemplified by flat tori.

\begin{examplelet}
\label{torus}Let $M$ be the flat $n$-torus obtained from the standard
embedding of $\mathbb{Z}^{n}\hookrightarrow \mathbb{R}^{n},$ i.e. that with
fundamental domain $\left[ 0,1\right] ^{n}.$ For $K$ we take the equivalence
class of the point $\left( \frac{1}{2},\frac{1}{2},\ldots ,\frac{1}{2}%
\right) .$ The cut locus of $K$ is the equivalence class of the boundary of $%
\left[ 0,1\right] ^{n}.$ The critical points are the centers of subcubes of
the boundary of $\left[ 0,1\right] ^{n}.$ For example, the center of the $k$%
--dimensional subcube 
\begin{equation*}
\left[ 0,1\right] ^{k}\times \overset{\left( n-k\right) \text{ times }}{%
\overbrace{\left( 0,0,0,\ldots ,0\right) }}
\end{equation*}%
is critical. The sub-index of a critical point at the center of a $k$%
--dimensional subcube is $n-k.$ For instance, the equivalence class of the
corners $\left[ \left( 1,1,1,\ldots ,1\right) \right] $ is a maximum and has
sub-index $n.$ In light of the conjecture that the total Betti numbers of a
nonnegatively curved $n$--manifold is $\leq 2^{n}=\Sigma _{i}Betti_{i}\left(
T^{n}\right) ,$ it is intriguing that, for this example, the number of
critical points of sub-index $\lambda $ coincides with the $\lambda ^{th}$%
--Betti number of the torus.

A slide show illustrating this example in dimension 3 can be found at \cite%
{Herz}.
\end{examplelet}

The proof of Theorem \ref{mainthm} is divided into three cases:%
\begin{equation*}
\begin{array}{ll}
\text{Case 1}\text{:} & A(\Uparrow _{x_{0}}^{K})=\emptyset ,\vspace*{0.05in}
\\ 
\text{Case 2}\text{:} & A(\Uparrow _{x_{0}}^{K})\neq \emptyset \text{ and }%
\partial A(\Uparrow _{x_{0}}^{K})=\emptyset ,\vspace*{0.05in} \\ 
\text{Case 3}\text{: } & \partial A(\Uparrow _{x_{0}}^{K})\neq \emptyset .%
\end{array}%
\end{equation*}%
In Section 1, we establish notations and conventions. A lemma that reduces
the proof of Theorem \ref{mainthm} to a local problem is presented in
Section \ref{local reduction section}. In Section \ref{flow section}, we
study certain flows on $\mathbb{R}^{n}$ and $S^{n-1}\subset \mathbb{R}^{n}$
that we then transfer to $M$ via normal coordinates, in Section \ref%
{proofofresults}, where the proof of Theorem \ref{mainthm} is completed.

For a general idea of the proof, note that if $A(\Uparrow _{x_{0}}^{K})$ is
empty, all vectors along minimal geodesics emanating from $x_{0}$ point in a
direction of decrease for $\mathrm{dist}\left( K,\cdot \right) $. This means 
$x_{0}$ is an isolated local maximum. So any cell of dimension less than $n$
can be deformed into $B\left( K,c_{0}-\varepsilon \right) $.

For the other two cases, $A(\Uparrow _{x_{0}}^{K})$ is not empty, and for $%
k=1,\ldots ,(\lambda -1)$ we consider a $k$-dimensional cell $\iota
:E^{k}\longrightarrow $ $B\left( K,c_{0}+\varepsilon \right) $ with $\iota
\left( \partial E^{k}\right) \subset $ $B\left( K,c_{0}-\varepsilon \right) $%
. To prove the theorem, we construct a homotopy of $\iota $ into $B\left(
K,c_{0}-\varepsilon \right) $ that fixes $\iota |_{\partial E^{k}}$. When
the boundary of $A(\Uparrow _{x_{0}}^{K})$ is empty, $\mathrm{span}\left(
A(\Uparrow _{x_{0}}^{K})\right) \subset T_{x_{0}}M$ is a linear subspace of
dimension $n-\lambda .$ Transversality allows us to move $\iota \left(
E^{k}\right) $ away from $\exp _{x_{0}}\left( \mathrm{span}\left( A(\Uparrow
_{x_{0}}^{K})\right) \right) $.

If both $A(\Uparrow _{x_{0}}^{K})$ and its boundary are not empty, $%
A(\Uparrow _{x_{0}}^{K})$ contains a vector $w_{s}$ such that 
\begin{equation*}
A(\Uparrow _{x_{0}}^{K})\subset \left\{ \left. v\in S_{x_{0}}\text{ }%
\right\vert \text{ }\sphericalangle \left( v,w_{s}\right) \leq \frac{\pi }{2}%
\right\} .
\end{equation*}%
We argue that there is an extension of $-w_{s}$ to a vector field whose flow
moves $\iota \left( E^{k}\right) $ into $B\left( K,c_{0}-\varepsilon \right) 
$.

We prove Theorem \ref{thm2} in Sections \ref{conj section} and \ref{pi_1
sect}, and state alternative versions of Theorems \ref{mainthm} and \ref%
{thm2} in Section \ref{alt section}.

\bigskip

\noindent \textbf{Acknowledgement: }\emph{We are profoundly grateful to the
referee for pointing out that a preliminary draft contained an error in the
proof of Theorem \ref{solu flow thm}.}

\bigskip

\section{Background, Notations, and Conventions\label{background section}}

We assume throughout that $M$ is a complete Riemannian $n$--manifold. We
write $\mathrm{dist}_{K}\left( \cdot \right) $ for \textrm{dist}$\left(
K,\cdot \right) $ and set%
\begin{eqnarray*}
S\left( K,r\right) &\equiv &\left\{ \left. x\in M\text{ }\right\vert \text{ }%
\mathrm{dist}_{K}\left( x\right) =r\right\} \text{ and} \\
B\left( K,r\right) &\equiv &\left\{ \left. x\in M\text{ }\right\vert \text{ }%
\mathrm{dist}_{K}\left( x\right) <r\right\} .
\end{eqnarray*}

We use the terms \emph{segment }and \emph{minimal geodesic }interchangeably.

For $x_{0}\in M$, recall that $S_{x_{0}}$ is the unit tangent sphere at $%
x_{0}.$ For either $K\subset M$ or $K\in M$,%
\begin{equation*}
\Uparrow _{x_{0}}^{K}\equiv \left\{ w\in S_{x_{0}}\ |\ w\text{ is tangent at 
}x_{0}\text{ to a minimal geodesic from }x_{0}\text{ to }K\right\} .
\end{equation*}%
We let 
\begin{equation*}
\uparrow _{x_{0}}^{K}\in \Uparrow _{x_{0}}^{K}
\end{equation*}%
stand for any member of $\Uparrow _{x_{0}}^{K},$ and we let $U_{\theta }$ be
the $\theta $--neighborhood of $\Uparrow _{x_{0}}^{K}$ in $S_{x_{0}},$ that
is 
\begin{equation*}
U_{\theta }\equiv \left\{ \left. v\in S_{x_{0}}\text{ }\right\vert \text{ }%
\sphericalangle \left( v,\Uparrow _{x_{0}}^{K}\right) <\theta \right\} .
\end{equation*}

Throughout we assume that $x_{0}$ is an isolated critical point for $\mathrm{%
dist}_{K}$ with $\mathrm{dist}_{K}\left( x_{0}\right) =c_{0},$ and we denote
the injectivity radius at $x_{0}$ by $\mathrm{inj}_{x_{0}}.$ By \cite{Plaut}%
, $\mathrm{dist}_{K}$ is directionally differentiable, and in a direction $%
v\in S_{x_{0}},$ the derivative is 
\begin{equation*}
D_{v}(\mathrm{dist}_{K})=-\cos \sphericalangle (v,\Uparrow _{x_{0}}^{K}).
\end{equation*}%
An immediate consequence is

\begin{lemma}
\label{Taylorpolylemma}Given $x_{0}\in M$ and $\varepsilon >0$, there exists 
$\rho >0$ such that for all $v\in S_{x_{0}}$ 
\begin{equation*}
c_{0}+\left( -\cos \sphericalangle (v,\Uparrow _{x_{0}}^{K})-\varepsilon
\right) \cdot t\leq \mathrm{dist}_{K}\left( {\exp _{x_{0}}\left( tv\right) }%
\right) \leq c_{0}+\left( -\cos \sphericalangle (v,\Uparrow
_{x_{0}}^{K})+\varepsilon \right) \cdot t
\end{equation*}%
for all $t\in \lbrack 0,\rho ]$.
\end{lemma}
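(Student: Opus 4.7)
The plan is to read the statement as a first-order Taylor estimate for the one-parameter families $t \mapsto \mathrm{dist}_K(\exp_{x_0}(tv))$ at $t = 0$, with the role of a linear differential played by the (possibly nonlinear) function $\phi(v) := -\cos \sphericalangle(v, \Uparrow_{x_0}^K)$, and with the estimate required to hold \emph{uniformly} in $v \in S_{x_0}$. For each individual $v$, Plaut's directional derivative formula quoted just above the lemma already yields such an inequality on some $v$-dependent interval $[0,\rho_v]$. The entire task is thus to upgrade from pointwise to uniform.

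First I would verify that $\phi : S_{x_0} \to \mathbb{R}$ is continuous, in fact $1$-Lipschitz. The set $\Uparrow_{x_0}^K$ is closed in $S_{x_0}$ (limits of unit tangents to minimal geodesics from $x_0$ to $K$ are themselves tangent to minimal geodesics, by the standard Arzel\`a--Ascoli argument using completeness), so $v \mapsto \sphericalangle(v, \Uparrow_{x_0}^K)$ is $1$-Lipschitz on $S_{x_0}$, and composing with $-\cos$ preserves this.

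Next, fix $v_0 \in S_{x_0}$. Plaut's formula supplies $\rho_{v_0} > 0$ so that the displayed two-sided inequality holds at $v = v_0$ with $\varepsilon/3$ in place of $\varepsilon$, for all $t \in [0,\rho_{v_0}]$. To extend this to a whole neighborhood of $v_0$, I would use two ingredients: (a) $\mathrm{dist}_K$ is $1$-Lipschitz on $M$; and (b) since $d\exp_{x_0}|_0 = \mathrm{id}$, for any $\eta > 0$ there exist $\rho' > 0$ and a neighborhood $W_{v_0}$ of $v_0$ on which
\[
\mathrm{dist}\bigl(\exp_{x_0}(tv),\, \exp_{x_0}(tv_0)\bigr) \leq (1+\eta)\, t \, \|v - v_0\|
\]
for all $t \in [0,\rho']$. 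Splitting
\[
\mathrm{dist}_K(\exp_{x_0}(tv)) - c_0 - \phi(v)\,t
\]
into three pieces---the displacement from $\exp_{x_0}(tv_0)$ to $\exp_{x_0}(tv)$ (bounded by (a) and (b)), the pointwise Taylor residual at $v_0$ (bounded by the previous paragraph), and the change $\phi(v_0) - \phi(v)$ (bounded by Step~1)---a routine triangle-inequality computation shows that, after shrinking $W_{v_0}$ in terms of $\varepsilon$ and $\eta$, each piece is at most $\varepsilon t / 3$ for $v \in W_{v_0}$ and $t \in [0,\min\{\rho_{v_0},\rho'\}]$, giving the full inequality on that neighborhood.

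Finally, I would invoke compactness of $S_{x_0}$ to extract a finite subcover $\{W_{v_i}\}_{i=1}^{N}$ and set $\rho := \min_{i} \min\{\rho_{v_i},\rho'_i\}$; this $\rho$ works for every $v \in S_{x_0}$. The main obstacle is precisely the upgrade from pointwise to uniform estimates: Plaut's formula is intrinsically pointwise in $v$, so transferring the bound across nearby directions requires both the regularity of $\exp_{x_0}$ near $0$ and the Lipschitz control on $\mathrm{dist}_K$ and on $\phi$, with care in balancing the three error contributions.
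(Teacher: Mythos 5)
The paper gives no proof of this lemma, labeling it merely ``an immediate consequence'' of Plaut's directional derivative formula $D_v(\mathrm{dist}_K) = -\cos\sphericalangle(v,\Uparrow_{x_0}^K)$. Your proposal correctly identifies the one non-immediate point---the uniformity of $\rho$ over $v\in S_{x_0}$, since the directional derivative is intrinsically a pointwise statement---and supplies a valid compactness argument to bridge it. The ingredients are all sound: $\Uparrow_{x_0}^K$ is closed, so $\phi(v) = -\cos\sphericalangle(v,\Uparrow_{x_0}^K)$ is $1$-Lipschitz; $\mathrm{dist}_K$ is $1$-Lipschitz; and $\exp_{x_0}$ is $(1+\eta)$-bi-Lipschitz on a small ball around $0\in T_{x_0}M$, with the scale $\rho'$ not depending on $v_0$. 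Decomposing $\mathrm{dist}_K(\exp_{x_0}(tv)) - c_0 - \phi(v)t$ into the three pieces as you describe, and choosing $W_{v_0}$ so that each contributes at most $\varepsilon t/3$, does produce the two-sided estimate on $W_{v_0}\times[0,\min\{\rho_{v_0},\rho'\}]$, and the finite subcover gives the uniform $\rho$. One small cosmetic point: in bounding piece (c) you should use the spherical distance $\sphericalangle(v,v_0)$ on $S_{x_0}$ rather than $\|v-v_0\|$, since that is the metric in which $\phi$ is $1$-Lipschitz; the two are uniformly comparable, so the constant in your shrinking of $W_{v_0}$ changes but nothing else does. In short, your argument is correct and makes rigorous the uniformity that the authors leave implicit.
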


For simplicity, we only discuss the proof of Theorem \ref{mainthm} in the
special case when $x_{0}$ is the only critical point with $\mathrm{dist}%
_{K}\left( x_{0}\right) =c_{0}$. Our method easily adapts to the general
case with minor technical modifications. By compactness, it follows that
there is an $\varepsilon _{0}>0$ such that 
\begin{equation}
x_{0}\text{ is the only critical point for }\mathrm{dist}_{K}\text{ in }%
\overline{B\left( K,c_{0}+\varepsilon _{0}\right) }\setminus B\left(
K,c_{0}-\varepsilon _{0}\right) .  \label{eps_0 inequal}
\end{equation}

For $v,w\in S_{x_{0}}\subset T_{x_{0}}M$ we use $\mathrm{dist}\left(
v,w\right) $ or $\sphericalangle \left( v,w\right) ,$ depending on the
context, and whether we wish to emphasize the importance of $v,w\in
S_{x_{0}} $ or the importance of $v,w\in T_{x_{0}}M.$ A similar comment
applies to $v,w\in S^{n-1}\subset \mathbb{R}^{n}.$ Thus for $v,w\in
S_{x_{0}}\subset T_{x_{0}}M$ or $v,w\in S^{n-1}\subset \mathbb{R}^{n}$ we use%
\begin{equation*}
\sphericalangle \left( v,w\right) =\mathrm{dist}\left( v,w\right)
\end{equation*}%
interchangeably. However, for $v,a,w\in S_{x_{0}}\subset T_{x_{0}}M$ or $%
v,a,w\in S^{n-1}\subset \mathbb{R}^{n}$ 
\begin{equation*}
\sphericalangle \left( v,a,w\right)
\end{equation*}%
refers to the angle of the hinge in $S_{x_{0}}$ or $S^{n-1}$ with vertex $a.$

\section{The Local Reduction Lemma\label{local reduction section}}

\begin{lemma}
\label{localreduction}(Local Reduction Lemma) Let $\varepsilon _{0}$ be as
in Statement \ref{eps_0 inequal}. Suppose that for $r\in \left( 0,\frac{%
\mathrm{inj}_{x_{0}}}{2}\right) $, we have 
\begin{equation*}
\overline{B(x_{0},2r)}\subset B\left( K,c_{0}+\varepsilon _{0}\right)
\setminus \overline{B\left( K,c_{0}-\varepsilon _{0}\right) }.
\end{equation*}%
Then there is an $\eta >0$ and a deformation retract of $\overline{B\left(
K,c_{0}+\varepsilon _{0}\right) }$ to a subset of $U\equiv B\left(
K,c_{0}-\eta \right) \cup B(x_{0},r)$.
\end{lemma}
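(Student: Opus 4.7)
The plan is to construct a continuous vector field $\tilde X$ on $\overline{B(K,c_0+\varepsilon_0)}$ whose flow $\Phi_t$ keeps $\mathrm{dist}_K$ monotone non-increasing along trajectories and, for some uniform time $T>0$, satisfies $\Phi_T(\overline{B(K,c_0+\varepsilon_0)})\subset U$; the deformation is then $H(y,s):=\Phi_{sT}(y)$. By \ref{eps_0 inequal}, the compact set $R:=\overline{B(K,c_0+\varepsilon_0)}\setminus B(x_0,r/2)$ contains no critical points of $\mathrm{dist}_K$ above level $c_0-\varepsilon_0$.

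First I would fix a small $\eta\in(0,\varepsilon_0)$ and build a continuous unit vector field $X$ on $R\cap\{\mathrm{dist}_K\geq c_0-\eta\}$ with $D_X(\mathrm{dist}_K)\leq -c$ for a uniform $c>0$. At each such $y$ there is a unit $v_y\in T_yM$ with $\sphericalangle(v_y,\Uparrow_y^K)>\tfrac{\pi}{2}+\alpha_y$, and Lemma \ref{Taylorpolylemma} shows that the parallel extension of $v_y$ in a normal-coordinate chart at $y$ still strictly decreases $\mathrm{dist}_K$ on a small neighborhood. Compactness extracts a finite subcover, and a subordinate partition of unity patches the local fields; convexity of the ``good'' half-spaces $\{v:\sphericalangle(v,\Uparrow_y^K)>\pi/2\}$ in each $T_yM$ ensures convex combinations remain in the decreasing cone, and the uniform lower bound $c$ follows from compactness.

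Next I would extend $X$ to $\tilde X$ on all of $\overline{B(K,c_0+\varepsilon_0)}$ via cutoffs: $\phi\colon M\to[0,1]$ with $\phi\equiv 0$ on $\overline{B(x_0,r/2)}$ and $\phi\equiv 1$ off $B(x_0,r)$, and $\psi\colon M\to[0,1]$ with $\psi\equiv 0$ on $\{\mathrm{dist}_K\leq c_0-2\eta\}$ and $\psi\equiv 1$ on $\{\mathrm{dist}_K\geq c_0-\eta\}$. Setting $\tilde X:=\phi\psi X$ (interpreted as zero wherever the cutoffs vanish) yields a continuous field on $\overline{B(K,c_0+\varepsilon_0)}$ along whose flow $\mathrm{dist}_K$ is non-increasing. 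On the complement of $U$, $\phi\equiv\psi\equiv 1$, so $\mathrm{dist}_K$ drops at rate at least $c$; hence for $T>(\varepsilon_0+\eta)/c$, no trajectory can remain in the complement of $U$ throughout $[0,T]$ without $\mathrm{dist}_K$ decreasing by more than $\varepsilon_0+\eta$, contradicting $\mathrm{dist}_K\geq c_0-\eta$.

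The principal obstacle is that once a trajectory enters $U$ it might exit again through $\partial B(x_0,r)$. I would handle this by modifying $\tilde X$ inside $B(x_0,r)$ to include an inward radial correction -- say, a convex combination with $-\grad d(\cdot,x_0)$ -- so that $d(\cdot,x_0)$ is non-increasing there and trajectories entering $B(x_0,r)$ are trapped, reaching $\overline{B(x_0,r/2)}\subset B(x_0,r)\subset U$ in bounded time. A secondary technical point is the globalization from Lemma \ref{Taylorpolylemma}: because $y\mapsto\Uparrow_y^K$ is only upper semicontinuous, each $v_y$ must be chosen with a strict angular margin so that neighboring directions continue to decrease $\mathrm{dist}_K$ and the partition-of-unity convex combinations remain inside the decreasing half-space.
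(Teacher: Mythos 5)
Your proposal takes a genuinely different route from the paper's. The paper first applies the Isotopy Lemma (to the region $\bar V_{1}$ away from $x_{0}$) to deformation retract $\overline{B(K,c_{0}+\varepsilon _{0})}$ onto the thin shell $\overline{B\bigl(K,c_{0}+\frac{\cos (\alpha _{0})}{100}r\bigr)}$, and only \emph{then} flows with a gradient-like unit field; because the shell has height of order $r$, the time needed to descend below level $c_{0}-\eta $ is less than $r/50$, so a trajectory starting in $B(x_{0},\tfrac{3}{4}r)$ moves less than $r/50$ and cannot leave $B(x_{0},r)$. In contrast, you try to build a single field whose flow simultaneously decreases $\mathrm{dist}_{K}$ \emph{and} traps trajectories that enter $B(x_{0},r)$.

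The trapping step is where your argument has a gap. Your correction must be continuous, so it must vanish on $\partial B(x_{0},r)$ where it meets the unmodified field $\phi \psi X$, and there $\phi \psi =1$. But a gradient-like field $X$ at a point $z$ near $\partial B(x_{0},r)$ need only satisfy $\sphericalangle (X(z),\Uparrow _{z}^{K})<\alpha _{0}$; this places no constraint at all on the component of $X(z)$ pointing away from $x_{0}$, which can be close to $1$. To force $d(\cdot ,x_{0})$ to be non-increasing at a point where the unmodified field has outward radial component $a>0$, a convex combination $(1-\mu )\phi \psi X+\mu \bigl(-\grad\,d(\cdot ,x_{0})\bigr)$ requires $\mu \geq \phi \psi a/(1+\phi \psi a)$, which is bounded away from zero as you approach $\partial B(x_{0},r)$ from inside, contradicting $\mu \rightarrow 0$ there. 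So trajectories that enter $B(x_{0},r)$ can escape through $\partial B(x_{0},r)$ while still at level $\geq c_{0}-\eta $, and you cannot conclude $\Phi _{T}(y)\in U$. A further difficulty: any nontrivial inward radial component generically \emph{increases} $\mathrm{dist}_{K}$ (moving toward $x_{0}$, which is at the top level $c_{0}$), which would also break the monotonicity your uniform-time argument tacitly uses.

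The paper sidesteps both problems by controlling the flow \emph{time} rather than the flow \emph{direction}: after the preliminary flattening into a shell of height $\sim r/50$, the residual descent time is so short that the unmodified unit gradient-like flow cannot carry a point of $B(x_{0},\tfrac{3}{4}r)$ out of $B(x_{0},r)$; trapping is automatic and the descent direction is never tampered with. Your first two paragraphs (the compactness/partition-of-unity construction of the gradient-like field and the uniform descent rate outside $U$) are fine and essentially reproduce the paper's use of $\alpha $-regularity; the missing ingredient is the preliminary Isotopy-Lemma flattening step, without which trapping near $x_{0}$ cannot be arranged by a continuous modification of the field.
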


\begin{proof}
(Cf the proof of Lemma 55 in \cite{Pet}.) Following \cite{Pet}, we say that
for $\alpha \in \left( 0,\frac{\pi }{2}\right) ,$ $x$ is $\alpha $--regular
for $\mathrm{dist}_{K}$ provided there is a $v\in $ $S_{x}$ so that 
\begin{equation*}
\sphericalangle \left( v,w\right) <\alpha \text{ for all }w\in \Uparrow
_{x}^{K}.
\end{equation*}%
From Proposition 47 in \cite{Pet}, we see that the set of $\alpha $--regular
points for $\mathrm{dist}_{K}$ is open and has a unit vector field $X$ so
that for every integral curve $c$ of $X$ 
\begin{equation*}
\frac{\mathrm{dist}_{K}c\left( t\right) -\mathrm{dist}_{K}c\left( s\right) }{%
t-s}<-\cos \alpha <0
\end{equation*}%
for all $t,s$ in the domain of $c.$

Since 
\begin{equation*}
\bar{V}_{0}\equiv \overline{B\left( K,c_{0}+\varepsilon _{0}\right) }%
\setminus \left\{ B\left( K,c_{0}-\varepsilon _{0}\right) \cup B(x_{0},\frac{%
1}{100}r)\right\}
\end{equation*}%
is compact and free of critical points for $\mathrm{dist}_{K},$ it follows
that there is an $\alpha _{0}>0$ so that $\mathrm{dist}_{K}$ is $\alpha _{0}$%
--regular on $\bar{V}_{0}.$

Let $X$ be a unit vector field on $\bar{V}_{0}$ so that%
\begin{equation}
\frac{\mathrm{dist}_{K}c\left( t\right) -\mathrm{dist}_{K}c\left( s\right) }{%
t-s}<-\cos \alpha _{0}<0.  \label{alph_0 reg Inequal}
\end{equation}%
We also have that 
\begin{equation*}
\bar{V}_{1}\equiv \overline{B\left( K,c_{0}+\varepsilon _{0}\right) }%
\setminus \left\{ B\left( K,c_{0}+\frac{\cos \left( \alpha _{0}\right) }{100}%
r\right) \right\}
\end{equation*}%
is compact and free of critical points for $\mathrm{dist}_{K},$ so there is $%
\alpha _{1}>0$ such that $\mathrm{dist}_{K}$ is $\alpha _{1}$--regular on $%
\bar{V}_{1}.$ It follows from the Isotopy Lemma (see, e.g., Lemma 55 in \cite%
{Pet}) that $\overline{B\left( K,c_{0}+\varepsilon _{0}\right) }$ is
homeomorphic to and deformation retracts onto $\overline{B\left( K,c_{0}+%
\frac{\cos \left( \alpha _{0}\right) }{100}r\right) }.$ Let $\Phi $ be the
deformation retraction that accomplishes this, and let $\bar{\Psi}$ be the
flow of $X.$ It follows from Inequality \ref{alph_0 reg Inequal} and the
fact that $X$ is unit, that for all 
\begin{equation*}
y\in B\overline{\left( K,c_{0}+\frac{\cos \left( \alpha _{0}\right) }{100}%
r\right) }\setminus \left\{ B\left( K,c_{0}-\varepsilon _{0}\right) \cup
B(x_{0},\frac{1}{4}r)\right\}
\end{equation*}%
there is a first time $t_{y}\in \left( 0,\frac{r}{50}\right) $ that varies
continuously with $y$ so that 
\begin{equation}
\bar{\Psi}_{t_{y}}\left( y\right) \in \overline{B\left( K,c_{0}-\frac{\cos
\left( \alpha _{0}\right) }{100}r\right) }.  \label{gets below}
\end{equation}

If, in addition, $y\in B(x_{0},\frac{3}{4}r),$ then for all $t\in \left[
0,t_{y}\right] \subset \left[ 0,\frac{r}{50}\right] ,$ 
\begin{equation}
\bar{\Psi}_{t}\left( y\right) \text{ is in }B\left( x_{0},r\right) .
\label{Stays in  R ball}
\end{equation}%
Let $\psi :M\longrightarrow \left[ 0,1\right] $ be $C^{\infty }$ and satisfy 
\begin{equation*}
\psi |_{\overline{B(x_{0},\frac{1}{4}r)}}\equiv 0\text{ and }\psi
|_{M\setminus B(x_{0},\frac{1}{2}r)}\equiv 1.
\end{equation*}%
Set 
\begin{equation*}
\Psi _{t}\left( y\right) =\left\{ 
\begin{array}{ll}
\bar{\Psi}_{t_{y}\cdot \psi \left( y\right) \cdot t}\left( y\right) & \text{%
if }y\in M\setminus B(x_{0},\frac{1}{8}r) \\ 
y & \text{if }y\in \overline{B(x_{0},\frac{1}{4}r)}%
\end{array}%
\right.
\end{equation*}%
and $\eta =\frac{\cos \left( \alpha _{0}\right) }{200}r.$

By combining \ref{gets below} with the definition of $\Psi $ and the fact
that $X$ is unit, it follows that for $y\in B\overline{\left( K,c_{0}+\frac{%
\cos \left( \alpha _{0}\right) }{100}r\right) }\setminus \left\{ B\left(
K,c_{0}-\varepsilon _{0}\right) \cup B(x_{0},\frac{3}{4}r)\right\} $,%
\begin{equation*}
\Psi _{1}\left( y\right) \in B\left( K,c_{0}-\eta \right) .
\end{equation*}

Combining this with \ref{Stays in R ball} it follows that 
\begin{equation*}
\Psi _{1}\left( B\overline{\left( K,c_{0}+\frac{\cos \left( \alpha
_{0}\right) }{100}r\right) }\right) \subset B\left( K,c_{0}-\eta \right)
\cup B(x_{0},r).
\end{equation*}%
Thus concatenating $\Phi $ with $\Psi $ gives a deformation retraction of $%
\overline{B\left( K,c_{0}+\varepsilon _{0}\right) }$ onto a subset of $%
B\left( K,c_{0}-\eta \right) \cup B(x_{0},r).$
\end{proof}

\section{Useful Flows on $\mathbb{R}^{n}\label{flow section}$}

In this section, we study certain flows on $\mathbb{R}^{n}$ and $%
S^{n-1}\subset \mathbb{R}^{n}$ that in the next section, are transferred to $%
M$ via normal coordinates to prove Theorem \ref{mainthm}. The main results
are Proposition \ref{Euclidean angle isotopy prop} and Theorem \ref{solu
flow thm}, which are used in the proofs of Case 2 and Case 3 of Theorem \ref%
{mainthm} respectively.

\begin{proposition}
\label{Euclidean angle isotopy prop}Write $\mathbb{R}^{n}=\mathbb{R}%
^{p+1}\oplus \mathbb{R}^{q+1}$ and let $S^{p}\subset \mathbb{R}^{p}$ and $%
S^{q}\subset \mathbb{R}^{q}$ be the unit spheres in $\mathbb{R}^{p+1}$ and $%
\mathbb{R}^{q+1}$ respectively. Then on the unit sphere $S^{n-1}\subset $ $%
\mathbb{R}^{n}:$

\noindent 1. $\emph{dist}\left( S^{p},\cdot \right) :S^{n-1}\setminus
\left\{ S^{p}\cup S^{q}\right\} \longrightarrow \mathbb{R}$ is smooth and
has no critical points$.$

\noindent 2. Let $\Uparrow $ be any closed subset of $S^{p}$ so that $%
S^{p}\subset B\left( \Uparrow ,\alpha \right) $ for some $\alpha \in \left(
0,\frac{\pi }{2}\right) .$ Then \emph{dist}$\left( \Uparrow ,\cdot \right)
:S^{n-1}\longrightarrow \mathbb{R}$ has no critical points on $%
S^{n-1}\setminus \left\{ S^{p}\cup S^{q}\right\} .$ In fact, $\mathrm{\func{%
grad}}\left( \emph{dist}\left( S^{p},\cdot \right) \right) $ is
gradient-like for \emph{dist}$\left( \Uparrow ,\cdot \right) $ on $%
S^{n-1}\setminus \left\{ S^{p}\cup S^{q}\right\} .$
\end{proposition}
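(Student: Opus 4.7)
The plan is to parametrize $S^{n-1}\setminus(S^{p}\cup S^{q})$ by the join coordinates
\[
v=\cos\theta\cdot u_{1}+\sin\theta\cdot u_{2},\qquad u_{1}\in S^{p},\ u_{2}\in S^{q},\ \theta\in(0,\tfrac{\pi}{2}),
\]
and then reduce both claims to explicit spherical trigonometry inside the $2$-plane $\mathrm{span}\{u_{1},u_{2}\}$.

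For part 1, I would first observe that in these coordinates $u_{1}$ is the unique nearest point of $S^{p}$ to $v$ and $\mathrm{dist}(S^{p},v)=\theta$: the identity $\langle v,w\rangle=\cos\theta\,\langle u_{1},w\rangle$ for $w\in S^{p}$ makes this immediate. Smoothness off $S^{p}\cup S^{q}$ then follows because $\cos\theta$ equals the norm of the orthogonal projection of $v$ onto $\mathbb{R}^{p+1}$, which is smooth and takes values in $(0,1)$ precisely off $S^{p}\cup S^{q}$. Differentiating the curve $t\mapsto\cos(\theta+t)u_{1}+\sin(\theta+t)u_{2}$ at $t=0$ identifies the intrinsic $S^{n-1}$-gradient of $\mathrm{dist}(S^{p},\cdot)$ at $v$ as
\[
g(v)=-\sin\theta\cdot u_{1}+\cos\theta\cdot u_{2},
\]
a unit vector, so no critical points occur off $S^{p}\cup S^{q}$.

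For part 2, I would aim to show that $g(v)$ is gradient-like for $\mathrm{dist}(\Uparrow,\cdot)$ by verifying its directional derivative is strictly positive. The Plaut formula gives
\[
D_{g}\mathrm{dist}(\Uparrow,\cdot)(v)=-\cos\sphericalangle\bigl(g,\Uparrow_{v}^{\Uparrow}\bigr),
\]
so it suffices to show $\sphericalangle(g,\uparrow_{v}^{w})>\pi/2$ for every $w\in\Uparrow$ realizing $\mathrm{dist}(\Uparrow,v)$. Using $\langle u_{2},w\rangle=0$ for $w\in S^{p}$, the great-circle distance $d(v,w)=\arccos(\cos\theta\cos\beta)$ depends on $w$ only through $\beta\equiv\sphericalangle(u_{1},w)$ and is monotone in $\beta$; hence the $\Uparrow$-minimizers to $v$ coincide with the $\Uparrow$-minimizers to the foot $u_{1}\in S^{p}$. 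The hypothesis $S^{p}\subset B(\Uparrow,\alpha)$ with $\alpha<\pi/2$ then forces $\beta\le\alpha<\pi/2$ for each such $w$. A short computation of $\uparrow_{v}^{w}$, paired with $g$, should yield
\[
\cos\sphericalangle(g,\uparrow_{v}^{w})=\frac{-\sin\theta\cos\beta}{\sqrt{1-\cos^{2}\theta\cos^{2}\beta}}<0,
\]
delivering both conclusions at once. The only delicate point I anticipate is the identification of minimizers in the preceding step, and the constraint $\alpha<\pi/2$ is exactly what is needed to keep the final cosine strictly negative.
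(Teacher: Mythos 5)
Your proposal is correct and takes essentially the same route as the paper: spherical join coordinates, explicit computation of $\mathrm{grad}(\mathrm{dist}(S^{p},\cdot))$, the key observation that $\Uparrow$-minimizers to $v$ coincide with $\Uparrow$-minimizers to the foot $u_{1}\in S^{p}$ (forcing $\beta<\pi/2$ via the hypothesis $S^{p}\subset B(\Uparrow,\alpha)$), and an obtuse-angle estimate against each $\uparrow_{v}^{w}$ with $w$ a minimizer. The only stylistic difference is that the paper carries out the final angle estimate with two applications of the spherical law of cosines, whereas you substitute a direct Euclidean inner-product calculation that yields the cleaner closed formula $\cos\sphericalangle(g,\uparrow_{v}^{w})=-\sin\theta\cos\beta\big/\sqrt{1-\cos^{2}\theta\cos^{2}\beta}$; this is a repackaging of the same trigonometry, and your formula does check out.
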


\begin{proof}
Every point $P\in $ $S^{n-1}\setminus \left\{ S^{p}\cup S^{q}\right\} $ can
be written uniquely as 
\begin{equation*}
P=\left( X\sin \theta ,Y\cos \theta \right) ,\text{ }
\end{equation*}%
where $X\in S^{p},$ $Y\in S^{q},$ and $\theta \in \left( 0,\frac{\pi }{2}%
\right) .$ Then \textrm{dist}$\left( S^{p},\cdot \right) =\theta $ and hence
is smooth on $S^{n-1}\setminus \left\{ S^{p}\cup S^{q}\right\} $. Its
gradient is 
\begin{equation*}
\left. \mathrm{\func{grad}}\left( \mathrm{dist}\left( S^{p},\cdot \right)
\right) \right\vert _{P}=\left( X\cos \theta ,-Y\sin \theta \right)
=-\uparrow _{P}^{X};
\end{equation*}%
so Part $1$ is proven.

For Part $2,$ we start with $P=\left( X\sin \theta ,Y\cos \theta \right) \in
S^{n-1}\setminus \left\{ S^{p}\cup S^{q}\right\} ,$ and let $\Gamma $ be any
member of $\Uparrow $ with 
\begin{equation*}
\mathrm{dist}\left( \Gamma ,X\right) =\mathrm{dist}\left( \Uparrow ,X\right)
<\alpha <\frac{\pi }{2}.
\end{equation*}%
Applying the Law of Spherical Cosines to the right triangle $\Delta \left(
\Gamma ,X,P\right) $ gives%
\begin{equation}
\cos \left( \mathrm{dist}\left( \Gamma ,P\right) \right) =\cos \left( 
\mathrm{dist}\left( \Gamma ,X\right) \right) \cos \left( \mathrm{dist}\left(
X,P\right) \right) .  \label{right triangle equa}
\end{equation}%
Since 
\begin{equation}
\text{ }0<\mathrm{dist}\left( \Gamma ,X\right) <\frac{\pi }{2}\text{ \hspace{%
0.1in}and\hspace{0.1in} }0<\mathrm{dist}\left( X,P\right) <\frac{\pi }{2},
\label{accute dist
inequal}
\end{equation}%
the right-hand side of (\ref{right triangle equa}) is positive. It follows
that%
\begin{equation}
0<\cos \left( \mathrm{dist}\left( \Gamma ,P\right) \right) <\cos \left( 
\mathrm{dist}\left( \Gamma ,X\right) \right) .  \label{cos comp Inequal}
\end{equation}

A further consequence of Equation \ref{right triangle equa} is that for $%
\Gamma \in \Uparrow ,$ 
\begin{equation*}
\mathrm{dist}\left( \Gamma ,X\right) =\mathrm{dist}\left( \Uparrow ,X\right) 
\text{ if and only if }\mathrm{dist}\left( \Gamma ,P\right) =\mathrm{dist}%
\left( \Uparrow ,P\right) .
\end{equation*}%
Applying the Law of Spherical Cosines to the triangle $\Delta \left( \Gamma
,P,X\right) $ yields%
\begin{equation*}
\cos \left( \sphericalangle \left( X,P,\Gamma \right) \right) =\frac{\cos
\left( \mathrm{dist}\left( X,\Gamma \right) \right) -\cos \left( \mathrm{dist%
}\left( \Gamma ,P\right) \right) \cos \left( \mathrm{dist}\left( P,X\right)
\right) }{\sin \left( \mathrm{dist}\left( \Gamma ,P\right) \right) \sin
\left( \mathrm{dist}\left( P,X\right) \right) }.
\end{equation*}%
Combined with Inequality \ref{cos comp Inequal}, this gives $\sphericalangle
\left( X,P,\Gamma \right) \in \left( 0,\frac{\pi }{2}\right) .$ Since $%
\Gamma $ was chosen to be any member of $\Uparrow $ with $\mathrm{dist}%
\left( \Gamma ,P\right) =\mathrm{dist}\left( \Uparrow ,P\right) $, it
follows that 
\begin{equation*}
-\uparrow _{P}^{X}=\mathrm{\func{grad}}\left( \mathrm{dist}\left(
S^{p},\cdot \right) \right) |_{P}
\end{equation*}%
is gradient-like for \textrm{dist}$\left( \Uparrow ,\cdot \right) $ at $P.$
Since $P$ was an arbitrary point in $S^{n-1}\setminus \left\{ S^{p}\cup
S^{q}\right\} ,$ Part 2 follows.
\end{proof}

For the remainder of the section, we let $\Uparrow $ be a $\frac{\pi }{2}$%
--net in $S^{n-1}$ for which $\partial A(\Uparrow )\neq \emptyset .$ By
definition, $A(\Uparrow )$ consists of unit tangent vectors at least $\frac{{%
\pi }}{{2}}$ away from $\Uparrow $. So $A(\Uparrow )$ is the intersection of 
$\frac{\pi }{2}$--balls in the unit sphere $S^{n-1}.$ Together with the
hypothesis $\partial A(\Uparrow )\neq \emptyset ,$ it follows that there is
a vector $w_{s}$ in $A(\Uparrow )$ such that 
\begin{equation}
A(\Uparrow )\subset \overline{B\left( {w_{s},\frac{{\pi }}{{2}}}\right) }.
\label{ballaroundsoul}
\end{equation}%
After applying a linear isometry of $\mathbb{R}^{n},$ we may assume that ${%
w_{s}=e}_{1}\equiv \left( 1,0,\ldots ,0\right) \in \mathbb{R}^{n}.$

The rest of this section is devoted to proving the following theorem, which,
apart from Proposition \ref{Euclidean angle isotopy prop}, is the only
result of this section that is directly used in the remainder of the paper.

\begin{theorem}
\label{solu flow thm}Let $\Uparrow $ be a $\frac{\pi }{2}$--net in $S^{n-1}$
for which $\partial A(\Uparrow )\neq \emptyset $ and 
\begin{equation*}
A(\Uparrow )\subset \overline{B\left( {e}_{1}{,\frac{{\pi }}{{2}}}\right) }.
\end{equation*}

There is a constant $\alpha _{0}\in \left( 0,\frac{\pi }{2}\right) $ so that
given $R>0,$ there is a flow $\Omega _{t}$ of $\mathbb{R}^{n}$ so that for
all $t\in \left[ 0,1\right] $ and all $y\in B\left( 0,R\right) ,$%
\begin{eqnarray}
\Omega _{t}\left( y\right) &\subset &B\left( 0,\left\vert y\right\vert +%
\frac{R}{\sqrt{10}}\right) ,  \label{near origin Inequal} \\
\Omega _{1}\left( B\left( 0,R\right) \right) &\subset &\mathbb{R}%
^{n}\setminus B\left( 0,\frac{R}{\sqrt{10}}\right) ,
\label{away fr org act
inequal}
\end{eqnarray}%
\begin{equation}
\sphericalangle \left( \Omega _{1}(B\left( 0,R\right) ),\Uparrow \right)
\leq \alpha _{0}<\frac{\pi }{2},  \label{Less than alpha_2}
\end{equation}%
and $\Omega _{t}$ is the identity on 
\begin{equation*}
\left\{ \mathbb{R}^{n}\setminus B\left( 0,2R\right) \right\} .
\end{equation*}
\end{theorem}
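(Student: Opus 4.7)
The plan is to reduce by scaling to $R = 1$, construct a compactly supported vector field on $\mathbb{R}^n$ whose time-1 flow realizes the required map, and verify the four conditions using that the vector field is parallel to $e_1$.

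The first step produces $\alpha_0$ from compactness. Since $A(\Uparrow) \subset \overline{B(e_1, \pi/2)}$ and $\Uparrow$ is a $\pi/2$-net in $S^{n-1}$, any $v \in S^{n-1}$ with $\angle(v, e_1) > \pi/2$ lies outside $A(\Uparrow)$, whence $\angle(v, \Uparrow) < \pi/2$ strictly. Fixing a small $\delta_0 \in (0, \arctan(1/\sqrt{10})]$, the set
\[
W := \bigl\{v \in S^{n-1} : \angle(v, e_1) \geq \tfrac{\pi}{2} + \delta_0\bigr\}
\]
is compact and disjoint from $A(\Uparrow)$, so by continuity $\alpha_0 := \max_{v \in W}\angle(v, \Uparrow) < \pi/2$. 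I will arrange the flow so that $\Omega_1(y)/|\Omega_1(y)| \in W$ for every $y \in B(0, R)$.

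By scaling ($\Omega_t^{(R)}(y) := R\,\Omega_t^{(1)}(y/R)$), it suffices to treat $R = 1$. I construct the vector field as $V(y) = -c(y)\,e_1\,\chi(|y|)$, where $\chi$ is a standard bump, $\chi \equiv 1$ on $[0, 1]$ and $\chi \equiv 0$ on $[2, \infty)$, and $c \geq 0$ is to be specified. Because $V$ is parallel to $e_1$, the ODE $\dot y = V(y)$ preserves each hyperplane $\{y_\perp = w\}$, reducing the construction to a family of one-dimensional ODEs in $y_1$. In each slice I pick a smooth monotone endpoint map $y_1(0) \mapsto y_1(1)$ whose image satisfies
\[
y_1(1) \leq -|w|\tan\delta_0, \quad y_1(1)^2 + |w|^2 \geq 1/10, \quad y_1(1)^2 + |w|^2 \leq (|y| + 1/\sqrt{10})^2,
\]
and recover $c(y)$ from the $y_1$-derivative of this endpoint map. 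A short calculation using $\tan\delta_0 \leq 1/\sqrt{10}$ shows these three conditions admit a nonempty interval of $y_1(1)$ for every initial $(y_1(0), w)$ with $y_1(0)^2 + |w|^2 \leq 1$.

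With $V$ in hand, verification of the four conclusions is direct. The cutoff $\chi$ gives $\Omega_t \equiv \mathrm{id}$ outside $B(0, 2R)$. Preservation of $y_\perp$ together with monotonicity of $y_1(t)$ yields $|\Omega_t(y)|^2 = y_1(t)^2 + |w|^2 \leq \max(|y|^2, |\Omega_1(y)|^2)$, so $|\Omega_t(y)| \leq |y| + 1/\sqrt{10}$. The middle and right conditions on $y_1(1)$ in the display give $|\Omega_1(y)| \geq 1/\sqrt{10}$, and combined with the left they place $\Omega_1(y)/|\Omega_1(y)| \in W$, whence $\angle(\Omega_1(y), \Uparrow) \leq \alpha_0$ from the first step. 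The main obstacle is the design of $c(y)$: the required displacement varies sharply, from essentially zero on the negative $e_1$ axis to nearly $1 + 1/\sqrt{10}$ on the positive axis, yet $c$ must be smooth, nonnegative, supported in $B(0, 2)$, and yield a diffeomorphism at every intermediate time; producing a smooth global endpoint map consistent across all slices and verifying uniform nonemptiness of the target intervals is the technical heart of the construction.
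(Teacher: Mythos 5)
Your plan is a genuine alternative to the paper's construction, and I will first say how it differs. The paper takes the constant-speed flow $\Psi _{t}(y)=y-te_{1}$, cuts it off outside $B(0,2R)$ to obtain $\overline{\Psi }$, and then defines $\Omega _{t}(y)=\overline{\Psi }_{(t_{y}+R/\sqrt{10})t}(y)$ with $t_{y}=\max (0,e_{1}\cdot y)$: the magnitude of the field is constant, but each point is flowed for a $y$--dependent total time. Your proposal instead keeps the time fixed at $1$ and varies the magnitude, building an autonomous field $V(y)=-c(y)\chi (|y|)e_{1}$. That would, if completed, actually match the word ``flow'' in the statement better than the paper does (the paper's $\Omega $ is a one--parameter family of maps, not the flow of any vector field, and $\Omega _{1}$ even identifies points on the same $e_{1}$--line). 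Your $\delta _{0}=\arctan (1/\sqrt{10})$ reproduces exactly the paper's cone $K_{0}=\{\cos \sphericalangle (\cdot ,e_{1})\leq -\sqrt{1/11}\}$, and the way you deduce \ref{near origin Inequal}--\ref{Less than alpha_2} from the three endpoint constraints, using monotonicity of $y_{1}(t)$ and preservation of $y^{\perp }$, is sound.

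There is, however, a genuine gap, and it is the one you flag yourself: producing the smooth nonnegative $c$. The recipe ``recover $c(y)$ from the $y_{1}$--derivative of the endpoint map'' is not correct. For a one--dimensional autonomous ODE $\dot{y}_{1}=-c(y_{1},w)$, the time--$1$ map is not obtained from $c$ by integration of a derivative nor $c$ from the endpoint map by differentiation; one must solve an Abel--type functional equation on each orbit interval, then check that the resulting $c$ (i) is nonnegative, (ii) is jointly smooth in $(y_{1},w)$, and (iii) vanishes to infinite order at the boundary of its support, which in turn forces the chosen endpoint map to be tangent to the identity to infinite order there. You also have an extra requirement you do not list: since $c\geq 0$, the displacement $y_{1}(0)-y_{1}(1)$ must be $\geq 0$, which restricts the target interval further. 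Finally, the target interval you construct degenerates to a single point at $(y_{1}(0),w)=(0,0)$, so a smooth selection near that point is not automatic. None of this is impossible, but it is precisely the hard part, and it is left undone. The paper sidesteps all of it: there is nothing to solve for because the field is $-fe_{1}$ with $f$ a fixed bump function, and the $y$--dependence is pushed entirely into the (merely continuous) reparameterization of time $t_{y}+R/\sqrt{10}$, with the arrival estimates supplied by Lemmas \ref{Accute Lemma} and \ref{arrive lemma}. To close your argument you would need to carry out the Abel/flatness/smooth--selection step, or else adopt the paper's variable--time device.
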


We construct $\Omega $ by modifying the flow 
\begin{equation*}
\Psi _{t}:V\mapsto V-te_{1}
\end{equation*}%
generated by $-e_{1}.$ Before doing this, we establish two preliminary
lemmas concerning $\Psi .$

\begin{lemma}
\label{Accute Lemma}\noindent 1. If $\sphericalangle \left( e_{1}{,y}\right)
\leq \frac{\pi }{2},$ then 
\begin{equation*}
\left. \frac{d}{dt}\left\vert \Psi (y,t)\right\vert \right\vert _{t=0}\leq 0.%
\text{ }
\end{equation*}%
If $\sphericalangle \left( e_{1}{,y}\right) \geq \frac{\pi }{2},$ then%
\begin{equation*}
0\leq \left. \frac{d}{dt}\left\vert \Psi (y,t)\right\vert \right\vert
_{t=0}\leq 1.
\end{equation*}

\noindent 2. Let $y^{\perp }$ be the component of $y$ that is perpendicular
to $e_{1}.$ Then for $y\neq 0,$ 
\begin{equation*}
\left. \frac{d}{dt}\cos \sphericalangle \left( \Psi (y,t),e_{1}\right)
\right\vert _{t=0}=-\frac{\left\vert y^{\perp }\right\vert ^{2}}{\left\vert
y\right\vert ^{3}}.
\end{equation*}%
In particular, for $y\notin \mathrm{span}\left\{ e_{1}\right\} ,$ 
\begin{equation*}
t\mapsto \sphericalangle \left( \Psi (y,t),e_{1}\right)
\end{equation*}%
is a strictly increasing function.

\noindent 3. For $y=e_{1},$ 
\begin{equation*}
\sphericalangle \left( \Psi (y,t),e_{1}\right) =\left\{ 
\begin{array}{cc}
0 & \text{for }t\in \left( 0,1\right) \\ 
\pi & \text{for }t>1.%
\end{array}%
\right.
\end{equation*}

\noindent 4. Set 
\begin{equation*}
t_{y}\equiv \left\{ 
\begin{array}{ll}
0 & \text{if }e_{1}\cdot y\leq 0 \\ 
e_{1}\cdot y & \text{if }e_{1}\cdot y\geq 0.%
\end{array}%
\right.
\end{equation*}%
If $e_{1}\cdot y\geq 0,$ then 
\begin{equation*}
\Psi \left( y,t_{y}\right) =y^{\perp }.
\end{equation*}
\end{lemma}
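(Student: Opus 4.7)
The plan is to decompose $y$ orthogonally as $y = a e_1 + y^{\perp}$ with $a = e_1 \cdot y$, and reduce each part of the lemma to a short calculation in this coordinate form. Under the flow, $\Psi(y,t) = (a-t)e_1 + y^{\perp}$, so
\begin{equation*}
|\Psi(y,t)|^{2} = (a-t)^{2} + |y^{\perp}|^{2}, \qquad \Psi(y,t)\cdot e_{1} = a-t.
\end{equation*}
Everything else is a differentiation of these two expressions.

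For Part 1, I would differentiate $|\Psi(y,t)|$ at $t=0$ to obtain $-a/|y| = -\cos\sphericalangle(e_{1},y)$. The two inequalities then read off immediately: when $\sphericalangle(e_{1},y)\leq \pi/2$ the cosine is nonnegative so the derivative is nonpositive, and when the angle is $\geq \pi/2$ the derivative lies in $[0,1]$, the upper bound being $|\cos\sphericalangle(e_{1},y)|\leq 1$.

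For Part 2, I would write $\cos\sphericalangle(\Psi(y,t),e_{1}) = (a-t)/|\Psi(y,t)|$ and apply the quotient rule at $t=0$, using the derivative $-a/|y|$ of $|\Psi(y,t)|$ just computed. The numerator collapses to $-|y|^{2}+a^{2} = -|y^{\perp}|^{2}$, giving the stated formula. The monotonicity claim for $y\notin \mathrm{span}\{e_{1}\}$ follows because $|(\Psi(y,t))^{\perp}| = |y^{\perp}| > 0$ for every $t$, so the same computation with $y$ replaced by $\Psi(y,t)$ yields a strictly negative derivative of the cosine at every time, hence a strictly increasing angle. Parts 3 and 4 are immediate substitutions: for $y=e_{1}$, $\Psi(y,t)=(1-t)e_{1}$, whose direction is $+e_{1}$ for $t\in(0,1)$ and $-e_{1}$ for $t>1$; and for $a\geq 0$, $\Psi(y,t_{y}) = \Psi(y,a) = y - a e_{1} = y^{\perp}$.

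There is essentially no obstacle here; the lemma is a bookkeeping exercise about the straight-line translation flow, and the only mild point worth stating carefully is the sign convention in Part 1 (the derivative of $|\Psi|$ at $t=0$ being $-\cos\sphericalangle(e_{1},y)$) so that the two inequality ranges line up with the stated angle hypotheses.
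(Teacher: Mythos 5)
Your proposal is correct and takes essentially the same approach as the paper: both differentiate $|y-te_1|$ and $\cos\sphericalangle(e_1,y-te_1)$ directly, obtain $\frac{d}{dt}|y-te_1|\big|_{t=0}=-\frac{y\cdot e_1}{|y|}$, and deduce Parts 1 through 4 by direct substitution. Your added remark that $|(\Psi(y,t))^\perp|=|y^\perp|$ for all $t$ is a clean way to make the monotonicity claim in Part 2 explicit, which the paper leaves implicit.
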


\begin{proof}
Differentiation gives 
\begin{eqnarray*}
\frac{d}{dt}\left\vert y-te_{1}\right\vert &=&\frac{d}{dt}\left[ \left(
y-te_{1}\right) \cdot \left( y-te_{1}\right) \right] ^{1/2} \\
&=&\frac{1}{2}\left[ \left( y-te_{1}\right) \cdot \left( y-te_{1}\right) %
\right] ^{-1/2}\left( 2t-2y\cdot e_{1}\right) \\
&=&\frac{t-y\cdot e_{1}}{\left\vert y-te_{1}\right\vert }.
\end{eqnarray*}%
Evaluating at $t=0,$ we find%
\begin{equation}
\left. \frac{d}{dt}\left\vert y-te_{1}\right\vert \right\vert _{t=0}=-\frac{%
y\cdot e_{1}}{\left\vert y\right\vert },  \label{norm change 3}
\end{equation}%
proving Part 1.

Since 
\begin{equation*}
\cos \sphericalangle \left( e_{1},y-te_{1}\right) =\frac{y\cdot e_{1}-t}{%
\left\vert y-te_{1}\right\vert },
\end{equation*}%
\begin{equation}
\frac{d}{dt}\cos \sphericalangle \left( e_{1},y-te_{1}\right) =-\frac{y\cdot
e_{1}}{\left\vert y-te_{1}\right\vert ^{2}}\frac{d}{dt}\left\vert
y-te_{1}\right\vert -\frac{\left\vert y-te_{1}\right\vert -t\frac{d}{dt}%
\left\vert y-te_{1}\right\vert }{\left\vert y-te_{1}\right\vert ^{2}}.
\label{angle with
e_1 eqn}
\end{equation}%
Evaluating \ref{angle with e_1 eqn} at $t=0$ and using \ref{norm change 3}
gives 
\begin{eqnarray*}
\left. \frac{d}{dt}\cos \sphericalangle \left( e_{1},y-te_{1}\right)
\right\vert _{t=0} &=&\frac{y\cdot e_{1}}{\left\vert y\right\vert ^{2}}\cdot 
\frac{y\cdot e_{1}}{\left\vert y\right\vert }-\frac{\left\vert y\right\vert 
}{\left\vert y\right\vert ^{2}} \\
&=&\frac{\left( e_{1}\cdot y\right) ^{2}-\left\vert y\right\vert ^{2}}{%
\left\vert y\right\vert ^{3}} \\
&=&-\frac{\left\vert y^{\perp }\right\vert ^{2}}{\left\vert y\right\vert ^{3}%
},
\end{eqnarray*}%
proving Part 2.

Part 3 is an immediate consequence of the definition of $\Psi .${\Huge \ }

To establish Part 4, note that if $e_{1}\cdot y\geq 0,$ we have%
\begin{eqnarray}
e_{1}\cdot \Psi \left( y,t_{y}\right) &=&e_{1}\cdot y-t_{y}\left( e_{1}\cdot
e_{1}\right)  \notag \\
&=&e_{1}\cdot y-t_{y}  \label{eqn perrrrpp} \\
&=&0,  \notag
\end{eqnarray}%
since $t_{y}=e_{1}\cdot y.$ Letting the superscript $^{\perp }$ denote the
component of a vector perpendicular to $e_{1},$ we then have 
\begin{eqnarray*}
\Psi \left( y,t_{y}\right) &=&\left( \Psi \left( y,t_{y}\right) \right)
^{\perp }\text{, by Equation }\ref{eqn perrrrpp} \\
&=&y-t_{y}e_{1} \\
&=&y^{\perp },
\end{eqnarray*}%
proving Part 4.
\end{proof}

\begin{lemma}
\label{arrive lemma}Set 
\begin{equation*}
t_{y}\equiv \left\{ 
\begin{array}{ll}
0 & \text{if }e_{1}\cdot y\leq 0 \\ 
e_{1}\cdot y & \text{if }e_{1}\cdot y\geq 0.%
\end{array}%
\right.
\end{equation*}%
For all $y\in B\left( 0,R\right) $ and all $t\in \left[ 0,\frac{R}{\sqrt{10}}%
\right] ,$ 
\begin{equation*}
\cos \sphericalangle \left( \Psi _{t_{y}+\frac{R}{\sqrt{10}}}\left( y\right)
,e_{1}\right) \leq -\sqrt{\frac{1}{11}},
\end{equation*}%
\begin{equation*}
\left\vert \Psi _{t_{y}+t}\left( y\right) \right\vert \leq \left\vert
y\right\vert +\frac{R}{\sqrt{10}}
\end{equation*}%
and 
\begin{equation*}
\frac{R}{\sqrt{10}}\leq \left\vert \Psi _{t_{y}+\frac{R}{\sqrt{10}}}\left(
y\right) \right\vert .
\end{equation*}
\end{lemma}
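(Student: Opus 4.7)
The plan is to split into the two cases dictated by the piecewise definition of $t_y$ and compute each of the three estimates directly. In both cases the flow is the straight-line translation $\Psi_s(z)=z-se_1$, and the numerical constants $R/\sqrt{10}$ and $\sqrt{1/11}$ are matched so that a single elementary inequality closes each estimate.

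\emph{Case 1.} If $e_1\cdot y\le 0$, then $t_y=0$ and $\Psi_{t_y+t}(y)=y-te_1$. The upper norm bound is the triangle inequality $|y-te_1|\le|y|+t\le|y|+R/\sqrt{10}$. The lower norm bound follows from the identity $|y-(R/\sqrt{10})e_1|^2=(e_1\cdot y-R/\sqrt{10})^2+|y^\perp|^2$, whose first term is at least $R^2/10$ because $e_1\cdot y\le 0$. For the cosine, setting $a=-e_1\cdot y+R/\sqrt{10}\ge R/\sqrt{10}$ and $b=|y^\perp|$, one has
\[
\cos\sphericalangle\bigl(\Psi_{R/\sqrt{10}}(y),e_1\bigr)=-a/\sqrt{a^2+b^2};
\]
the required inequality is equivalent to $10a^2\ge b^2$, which holds because $a\sqrt{10}\ge R\ge|y|\ge b$.

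\emph{Case 2.} If $e_1\cdot y\ge 0$, then $t_y=e_1\cdot y$ and Part 4 of Lemma \ref{Accute Lemma} gives $\Psi_{t_y}(y)=y^\perp$, so $\Psi_{t_y+t}(y)=y^\perp-te_1$. The three estimates reduce to the same calculation as in Case 1 with the $e_1$-component set to zero: the triangle inequality for the upper bound, $\sqrt{|y^\perp|^2+R^2/10}\ge R/\sqrt{10}$ for the lower bound, and the choice $a=R/\sqrt{10}$, $b=|y^\perp|\le R=a\sqrt{10}$ for the cosine. There is no substantive obstacle; the only step worth flagging is the invocation of Part 4 of Lemma \ref{Accute Lemma} in Case 2, which unifies the two cases by normalizing $y$ to its perpendicular component at time $t_y$---without it the factor $e_1\cdot y$ would persist through the denominators and the appearance of the constant $\sqrt{1/11}$ would look ad hoc rather than as the tight consequence of $a\sqrt{10}\ge b$ with $|y|\le R$.
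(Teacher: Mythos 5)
Your proof is correct, and it handles the $e_1\cdot y\ge 0$ case by exactly the same computation as the paper (via Part~4 of Lemma~\ref{Accute Lemma} to normalize to $y^\perp$). The genuine difference is in the case $e_1\cdot y\le 0$: the paper reduces this case to the first one by invoking Parts~1 and~2 of Lemma~\ref{Accute Lemma} --- the angle $t\mapsto\sphericalangle(\Psi_t(y),e_1)$ is monotone increasing, and the norm $t\mapsto|\Psi_t(y)|$ is monotone nondecreasing once the angle passes $\pi/2$ --- together with the observation that $y^\perp$ lies on the backward flow line through $y$, so that the already-established bounds for $y^\perp$ transfer to $y$. You instead compute directly: with $a=-e_1\cdot y+R/\sqrt{10}\ge R/\sqrt{10}$ and $b=|y^\perp|$, you get $\cos\sphericalangle=-a/\sqrt{a^2+b^2}$ and close the estimate from $10a^2\ge R^2>b^2$, and the norm bounds are one-line consequences of the Pythagorean decomposition. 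Your route is more self-contained and elementary (it uses only Part~4 of Lemma~\ref{Accute Lemma}, not Parts~1 and~2), and it makes the constant $\sqrt{1/11}$ transparent as the tight consequence of $a\sqrt{10}\ge R>b$; the paper's route is more modular in that it reuses the first case verbatim rather than recomputing, at the cost of invoking the monotonicity machinery. Both are valid proofs of the same three inequalities.
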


\begin{proof}
First we prove the inequalities when $e_{1}\cdot y\geq 0.$

Part 4 of Lemma \ref{Accute Lemma} gives that if $e_{1}\cdot y\geq 0,$ then $%
y-t_{y}e_{1}=y^{\perp },$ the component of $y$ that is perpendicular to $%
e_{1}.$ So for $t\in \left[ 0,\frac{R}{\sqrt{10}}\right] $ and $y\in B\left(
0,R\right) ,$ 
\begin{eqnarray}
\left\vert y-\left( t_{y}+t\right) e_{1}\right\vert ^{2} &=&\left\vert
y^{\perp }-te_{1}\right\vert ^{2}  \notag \\
&=&\left\vert y^{\perp }\right\vert ^{2}+\left\vert t\right\vert ^{2}  \notag
\\
&\leq &\left\vert y\right\vert ^{2}+\left\vert t\right\vert ^{2}  \notag \\
&\leq &\left\vert y\right\vert ^{2}+\frac{R^{2}}{10},\text{ so}  \notag \\
\left\vert y-\left( t_{y}+t\right) e_{1}\right\vert &\leq &\sqrt{\left\vert
y\right\vert ^{2}+\frac{R^{2}}{10}}  \label{inter Inequal} \\
&\leq &\left\vert y\right\vert +\frac{R}{\sqrt{10}}  \notag
\end{eqnarray}

so the second inequality follows when $e_{1}\cdot y\geq 0.$

To prove the first inequality when $e_{1}\cdot y\geq 0,$ note 
\begin{eqnarray*}
\cos \sphericalangle \left( e_{1},y-\left( t_{y}+t\right) e_{1}\right)
&=&e_{1}\cdot \left( \frac{y^{\perp }-te_{1}}{\left\vert y-\left(
t_{y}+t\right) e_{1}\right\vert }\right) \\
&=&-\left( \frac{t}{\left\vert y-\left( t_{y}+t\right) e_{1}\right\vert }%
\right) \\
&\leq &\left( \frac{-t}{\sqrt{R^{2}+t^{2}}}\right) ,
\end{eqnarray*}%
by Inequality \ref{inter Inequal}. Hence 
\begin{equation*}
\cos ^{2}\sphericalangle \left( e_{1},y-\left( t_{y}+t\right) e_{1}\right)
\geq \frac{t^{2}}{R^{2}+t^{2}}.
\end{equation*}%
Thus for $t=\frac{R}{\sqrt{10}},$%
\begin{eqnarray*}
\cos ^{2}\sphericalangle \left( e_{1},y-\left( t_{y}+\frac{R}{\sqrt{10}}%
\right) e_{1}\right) &\geq &\frac{\frac{R^{2}}{10}}{R^{2}+\frac{R^{2}}{10}}
\\
&=&\frac{\frac{R^{2}}{10}}{\frac{11R^{2}}{10}} \\
&=&\frac{1}{11},
\end{eqnarray*}%
and 
\begin{equation}
\cos \sphericalangle \left( e_{1},y-\left( t_{y}+\frac{R}{\sqrt{10}}\right)
e_{1}\right) \leq -\sqrt{\frac{1}{11}},  \label{angle with e_1 Inequal}
\end{equation}%
proving the first inequality when $e_{1}\cdot y\geq 0.$

To prove the third inequality when $e_{1}\cdot y\geq 0,$ note that 
\begin{eqnarray}
\left\vert \Psi _{t_{y}+\frac{R}{\sqrt{10}}}\left( y\right) \right\vert ^{2}
&=&\left\vert y^{\perp }-\frac{R}{\sqrt{10}}e_{1}\right\vert ^{2}  \notag \\
&=&\left\vert y^{\perp }\right\vert ^{2}+\frac{R^{2}}{10}  \notag \\
&\geq &\frac{R^{2}}{10},\text{ so}  \notag \\
\left\vert \Psi _{t_{y}+\frac{R}{\sqrt{10}}}\left( y\right) \right\vert
&\geq &\frac{R}{\sqrt{10}}.  \label{norm bnded below}
\end{eqnarray}

To prove the inequalities when $e_{1}\cdot y\leq 0$, we use that $y^{\perp }$
is on the backward flow line of $\Psi $ that passes through $y.$ Combining
this with the fact that 
\begin{equation*}
t\mapsto \sphericalangle \left( \Psi (y,t),e_{1}\right)
\end{equation*}%
is an increasing function gives 
\begin{eqnarray*}
\cos \sphericalangle \left( \Psi _{t_{y}+\frac{R}{\sqrt{10}}}\left( y\right)
,e_{1}\right) &=&\cos \sphericalangle \left( \Psi _{\frac{R}{\sqrt{10}}%
}\left( y\right) ,e_{1}\right) \\
&\leq &\cos \sphericalangle \left( \Psi _{\frac{R}{\sqrt{10}}}\left(
y^{\perp }\right) ,e_{1}\right) \\
&\leq &-\sqrt{\frac{1}{11}}.
\end{eqnarray*}

Combining Part 1 of Lemma \ref{Accute Lemma} with $e_{1}\cdot y\leq 0$ and
the fact that $y^{\perp }$ is on the backward flow line of $\Psi $ that
passes through $y,$ we have 
\begin{eqnarray*}
\left\vert \Psi _{t_{y}+\frac{R}{\sqrt{10}}}\left( y\right) \right\vert
&=&\left\vert \Psi _{\frac{R}{\sqrt{10}}}\left( y\right) \right\vert \\
&\geq &\left\vert \Psi _{\frac{R}{\sqrt{10}}}\left( y^{\perp }\right)
\right\vert \\
&\geq &\frac{R}{\sqrt{10}},
\end{eqnarray*}%
by Inequality \ref{norm bnded below}, and 
\begin{eqnarray*}
\left\vert \Psi _{t_{y}+\frac{R}{\sqrt{10}}}\left( y\right) \right\vert
&=&\left\vert \Psi _{\frac{R}{\sqrt{10}}}\left( y\right) \right\vert \\
&=&\left\vert y-\frac{R}{\sqrt{10}}e_{1}\right\vert \\
&\leq &\left\vert y\right\vert +\frac{R}{\sqrt{10}},
\end{eqnarray*}%
as claimed.
\end{proof}

\begin{proof}[Proof of Theorem \protect\ref{solu flow thm}]
Set 
\begin{equation*}
K_{0}\equiv \left\{ \left. z\in S^{n-1}\text{ }\right\vert \text{ }\cos
\sphericalangle \left( z,e_{1}\right) \leq -\sqrt{\frac{1}{11}}\right\} .
\end{equation*}

Our hypothesis 
\begin{equation*}
A(\Uparrow )\subset \overline{B\left( {e}_{1}{,\frac{{\pi }}{{2}}}\right) }
\end{equation*}%
implies that there is an $\alpha _{0}\in \left( 0,\frac{\pi }{2}\right) $ so
that 
\begin{equation}
\sphericalangle \left( z,\Uparrow \right) <\alpha _{0}<\frac{{\pi }}{{2}}
\label{less than alpha_0 inequal}
\end{equation}%
for all $z\in K_{0}.$

Given $R>0,$ set%
\begin{equation*}
Z=\left\{ \mathbb{R}^{n}\setminus B\left( 0,2R\right) \right\}
\end{equation*}%
and 
\begin{equation*}
O=\overline{B\left( 0,\frac{3}{2}R\right) }.
\end{equation*}

Let $f:\mathbb{R}^{n}\longrightarrow \left[ 0,1\right] $ be $C^{\infty }$
and satisfy 
\begin{equation}
f|_{Z}\equiv 0\text{ and }f|_{O}\equiv 1.  \label{dfn of f eqn}
\end{equation}%
Set 
\begin{equation*}
X\equiv -f\cdot e_{1}
\end{equation*}%
and let $\overline{\Psi }$ be the flow of $X.$

By Lemma \ref{arrive lemma}, for all $y\in B\left( 0,R\right) $ and all $%
t\in \left[ 0,\frac{R}{\sqrt{10}}\right] ,$ 
\begin{eqnarray}
\cos \sphericalangle \left( \Psi _{t_{y}+\frac{R}{\sqrt{10}}}\left( y\right)
,e_{1}\right) &\leq &-\sqrt{\frac{1}{11}},\text{ }\left\vert \Psi
_{t_{y}+t}\left( y\right) \right\vert \leq \left\vert y\right\vert +\frac{R}{%
\sqrt{10}},\text{ and}  \label{psi both inequal} \\
\frac{R}{\sqrt{10}} &\leq &\left\vert \Psi _{t_{y}+\frac{R}{\sqrt{10}}%
}\left( y\right) \right\vert .  \notag
\end{eqnarray}%
Combining this with Inequality \ref{less than alpha_0 inequal}, we see that
for all $y\in B\left( 0,R\right) ,$%
\begin{equation}
\sphericalangle \left( \Psi _{t_{y}+\frac{R}{\sqrt{10}}}\left( y\right)
,\Uparrow \right) <\alpha _{0}.  \label{Psi angle Inequal}
\end{equation}

The second inequality in \ref{psi both inequal} combined with the definition
of $\overline{\Psi }$ gives that for all $y\in B\left( 0,R\right) $ and all $%
t\in \left[ 0,\frac{R}{\sqrt{10}}\right] ,$ 
\begin{equation*}
\overline{\Psi }_{t_{y}+t}\left( y\right) =\Psi _{t_{y}+t}\left( y\right) .
\end{equation*}%
So Inequalities \ref{psi both inequal} and \ref{Psi angle Inequal} give that
for all $y\in B\left( 0,R\right) $ and all $t\in \left[ 0,\frac{R}{\sqrt{10}}%
\right] ,$ 
\begin{eqnarray*}
\sphericalangle \left( \overline{\Psi }_{t_{y}+\frac{R}{\sqrt{10}}}\left(
y\right) ,\Uparrow \right) &<&\alpha _{0},\text{ }\left\vert \overline{\Psi }%
_{t_{y}+t}\left( y\right) \right\vert \leq \left\vert y\right\vert +\frac{R}{%
\sqrt{10}},\text{ and} \\
\frac{R}{\sqrt{10}} &\leq &\left\vert \overline{\Psi }_{t_{y}+\frac{R}{\sqrt{%
10}}}\left( y\right) \right\vert .
\end{eqnarray*}

So we define $\Omega $ to be 
\begin{equation*}
\Omega :\left( y,t\right) \mapsto \overline{\Psi }_{\left( t_{y}+\sqrt{\frac{%
R}{10}}\right) t}\left( y\right) ,
\end{equation*}%
and note that the previous three displayed lines combined with Part 1 of
Lemma \ref{Accute Lemma} give us Inequalities \ref{near origin Inequal}, \ref%
{away fr org act inequal}, and \ref{Less than alpha_2}, and Equation \ref%
{dfn of f eqn} gives that $\Omega _{t}$ is the identity on 
\begin{equation*}
\left\{ \mathbb{R}^{n}\setminus B\left( 0,2R\right) \right\} .
\end{equation*}
\end{proof}

\section{Proof of the Connectedness Theorem}

\label{proofofresults}

In this section, we prove Theorem \ref{mainthm}. Recall from Section \ref%
{background section} that there is an $\varepsilon _{0}>0$ such that $x_{0}$
is the only critical point for \textrm{dist}$_{K}$ in $\overline{B\left(
K,c_{0}+\varepsilon _{0}\right) }\setminus B\left( K,c_{0}-\varepsilon
_{0}\right) .$

The proof of Theorem \ref{mainthm} is divided into three cases corresponding
to the three cases in the definition of sub-index.


\textbf{Case 1:} Suppose $A(\Uparrow _{x_{0}}^{K})=\emptyset $. Then for all 
$v\in S_{x_{0}}$, we have $\sphericalangle (v,\Uparrow _{x_{0}}^{K})<\frac{%
\pi }{2}$, and by compactness of $S_{x_{0}},$ there is an $\alpha $ so that 
\begin{equation*}
\sphericalangle (v,\Uparrow _{x_{0}}^{K})<\alpha <\frac{\pi }{2}
\end{equation*}%
for all $v\in S_{x_{0}}.$ Combining this with Lemma \ref{Taylorpolylemma},
it follows that 
\begin{equation*}
\mathrm{dist}_{K}\left( {exp_{x_{0}}\left( tv\right) }\right) \leq c_{0}-%
\frac{t}{2}\cdot \cos \left( \alpha \right)
\end{equation*}%
for all $v\in S_{x_{0}}$ and all sufficiently small $t.$ In particular, the
distance between $x_{0}$ and $K$ decreases regardless of the direction we
travel away from $x_{0}$. Thus the point $x_{0}$ is a strict local maximum
for $\mathrm{dist}_{K}$.

Suppose $\iota :E^{k}\longrightarrow B\left( K,c_{0}+\varepsilon _{0}\right) 
$ is a cell with $\mathrm{\dim }\left( E^{k}\right) =k\leq n-1$ and $\iota
\left( \partial E^{k}\right) \in B\left( K,c_{0}-\varepsilon _{0}\right) .$
After applying Lemma \ref{localreduction}, we may assume that $\iota \left(
E^{k}\right) \subset $ $B\left( K,c_{0}-\eta \right) \cup B(x_{0},R)$ for
any sufficiently small $R>0.$

Use transversality (see, e.g., Theorem 14.7 of \cite{BrockJan}) to deform $%
\iota $ so that $x_{0}$ is not in its image. It follows that $\iota \left(
E^{k}\right) \subset $ $B\left( K,c_{0}\right) .$ Since there are no
critical points for $\mathrm{dist}_{K}$ on $B\left( K,c_{0}\right) \setminus
B\left( K,c_{0}-\varepsilon _{0}\right) ,$ it follows that we can further
deform $\iota $ into $B\left( K,c_{0}-\varepsilon _{0}\right) .$

\textbf{Setup for Case 2 (and mostly also for Case 3): }Here we describe our
setup for Case 2. With a few modifications, it will also be our setup for
Case 3. Suppose $\iota :E^{k}\longrightarrow B\left( K,c_{0}+\varepsilon
_{0}\right) $ is a cell with $\mathrm{\dim }\left( E^{k}\right) =k$ and $%
\iota \left( \partial E^{k}\right) \in B\left( K,c_{0}-\varepsilon
_{0}\right) .$ As before, we construct a homotopy of $\iota $ into $B\left(
K,c_{0}-\varepsilon _{0}\right) $ that fixes $\iota |_{\partial E^{k}}.$
Since there are no critical points for $\mathrm{dist}_{K}$ on $B\left(
K,c_{0}\right) \setminus B\left( K,c_{0}-\varepsilon _{0}\right) ,$ it is
sufficient to construct a homotopy of $\iota $ to a cell whose image is in $%
B\left( K,c_{0}\right) .$

Let $S^{p}$ be the unit sphere in $\mathrm{span}\left( {\Uparrow _{x_{0}}^{K}%
}\right) ,$ and set 
\begin{equation*}
\bar{U}_{\frac{\pi }{4}}=\left\{ \left. v\in S_{x_{0}}\text{ }\right\vert 
\text{ }\sphericalangle \left( v,S^{p}\right) \leq \frac{\pi }{4}\right\} .
\end{equation*}%
There is an $\tilde{\alpha}_{0}\in \left( 0,\frac{\pi }{2}\right) $ so that
for all $v\in \bar{U}_{\frac{\pi }{4}},$ 
\begin{equation}
\sphericalangle \left( v,{\Uparrow _{x_{0}}^{K}}\right) <\tilde{\alpha}_{0}<%
\frac{\pi }{2}.  \label{dfn of alpha_0}
\end{equation}

It follows from Lemma \ref{Taylorpolylemma} that there is an $R_{1}>0$ so
that for all $\rho \in \left[ 0,R_{1}\right] $ and all $y\in B\left(
x_{0},2\rho \right) \setminus $ $B\left( x_{0},\frac{\rho }{2}\right) $ with 
$\sphericalangle \left( \Uparrow _{x_{0}}^{y},\Uparrow _{x_{0}}^{K}\right)
\in \left( 0,\tilde{\alpha}_{0}\right) ,$

\begin{equation}
\mathrm{dist}_{K}\left( y\right) <c_{0}-\frac{\rho }{3}\cos \tilde{\alpha}%
_{0}.  \label{bleow c_0 Ineqaul}
\end{equation}

Choose $4r<\min \left\{ \frac{1}{2}\mathrm{inj}_{x_{0}},\text{ }\varepsilon
_{0},R_{1}\right\} .$ Then $\overline{B(x_{0},2r)}$ is contained in $B\left(
K,c_{0}+\varepsilon _{0}\right) \setminus \overline{B\left(
K,c_{0}+\varepsilon _{0}\right) }.$ So after applying Lemma \ref%
{localreduction}, we may assume 
\begin{equation*}
\iota \left( E^{k}\right) \subset B\left( K,c_{0}-\eta \right) \cup
B(x_{0},r).
\end{equation*}%
In particular, 
\begin{equation}
\iota \left( E^{k}\right) \setminus B(x_{0},r)\subset B\left( K,c_{0}-\eta
\right) .  \label{local red consq Statement}
\end{equation}

Finally, 
\begin{eqnarray*}
\mathrm{dist}_{K}B(x_{0},4r) &\geq &c_{0}-4r \\
&>&c_{0}-\varepsilon _{0},
\end{eqnarray*}%
and $\iota \left( \partial E^{k}\right) \subset B\left( K,c_{0}-\varepsilon
_{0}\right) .$ Thus $\iota \left( \partial E^{k}\right) \cap
B(x_{0},4r)=\emptyset .$

\textbf{Case 2:} Suppose $A(\Uparrow _{x_{0}}^{K})\neq \emptyset $, $%
\partial A(\Uparrow _{x_{0}}^{K})=\emptyset $, and 
\begin{equation*}
\mathrm{dim}\left( E^{k}\right) =k\leq n-\dim \mathrm{span}\left\{
A(\Uparrow _{x_{0}}^{K})\right\} -1.
\end{equation*}%
Define 
\begin{equation*}
C_{2r}A(\Uparrow _{x_{0}}^{K})\equiv \left\{ \left. \exp {_{x_{0}}\left( {%
tA(\Uparrow _{x_{0}}^{K})}\right) }\text{ }\right\vert {\ t\in \lbrack 0,2r]}%
\right\} .
\end{equation*}%
Note 
\begin{eqnarray*}
\dim E^{k}+\dim C_{2r}A(\Uparrow _{x_{0}}^{K}) &=&k+{\dim \mathrm{span}}%
\left\{ {A(\Uparrow _{x_{0}}^{K})}\right\} \\
&\leq &n-\dim \mathrm{span}\left\{ A(\Uparrow _{x_{0}}^{K})\right\} -1+{\dim 
\mathrm{span}}\left\{ {A(\Uparrow _{x_{0}}^{K})}\right\} \\
&<&n.
\end{eqnarray*}%
So by transversality (see, e.g., Theorem 14.7 of \cite{BrockJan}), there is
a homotopy $\iota _{t}$ of $\iota $ so that 
\begin{equation}
\iota _{1}\left( {E^{k}}\right) \bigcap C_{2r}A(\Uparrow
_{x_{0}}^{K})=\emptyset ,  \label{tranverslity statement}
\end{equation}%
and $\iota _{1}$ agrees with $\iota $ on $\iota _{1}^{-1}\left( M\setminus
B\left( {x_{0},3r}\right) \right) $. We may, moreover, choose $\iota _{t}$
so that for all $t,$ $\iota _{t}$ is arbitrarily close to $\iota .$ Abusing
notation we call $\iota _{1},$ $\iota .$ The last ingredient in the proof of
Case 2 is the following lemma.

\begin{lemma}
\label{case 2 Lemma}Set 
\begin{equation*}
\tilde{\eta}\equiv \mathrm{min}\left\{ \eta ,\frac{r}{2}\cos \tilde{\alpha}%
_{0}\right\} .
\end{equation*}%
There is a deformation retraction $H_{t}$ of $M\setminus C_{2r}A(\Uparrow
_{x_{0}}^{K})$ with the following properties:

\noindent 1. $H_{t}$ fixes both $M\setminus B(x_{0},2r)$ and $B\left(
K,c_{0}-\tilde{\eta}\right) \setminus B\left( {x_{0},\frac{{1}}{{2}}r}%
\right) .$

\noindent 2. $H_{t}$ restricts to a strong deformation retract of $%
B(x_{0},r)\setminus C_{r}A(\Uparrow _{x_{0}}^{K})$ to a subset of $B\left(
K,c_{0}-\frac{\tilde{\eta}}{2}\right) \setminus B\left( {x_{0},\frac{{1}}{{2}%
}r}\right) .$
\end{lemma}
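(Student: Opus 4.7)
The plan is to construct $H_t$ as a flow-based deformation. Via normal coordinates $\exp_{x_0}\colon T_{x_0}M \to M$, identify $T_{x_0}M$ with $\mathbb{R}^n$ and split $\mathbb{R}^n = \mathrm{span}(\Uparrow_{x_0}^K) \oplus \mathrm{span}(A(\Uparrow_{x_0}^K)) = \mathbb{R}^{p+1} \oplus \mathbb{R}^{q+1}$, so that $S^p$ and $S^q = A(\Uparrow_{x_0}^K)$ are the unit spheres of the two summands and $C_{2r}A(\Uparrow_{x_0}^K)$ corresponds to $\overline{B(0,2r)} \cap \mathbb{R}^{q+1}$. We shall build a vector field $X$ on $M$, supported in $B(x_0, 2r)$, whose orbits simultaneously push points angularly toward $\mathrm{span}(\Uparrow_{x_0}^K)$ and radially outward from $x_0$, and which vanishes smoothly on the open target $B(K, c_0 - \tilde\eta/2) \setminus B(x_0, r/2)$.

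For the core, take $\bar X(y) = \pi^\perp(y)$, where $\pi^\perp\colon \mathbb{R}^n \to \mathbb{R}^{p+1}$ is orthogonal projection. Writing $y = u + v$ with $u \in \mathbb{R}^{p+1}$, $v \in \mathbb{R}^{q+1}$, the flow of $\bar X$ satisfies $\dot u = u$, $\dot v = 0$; hence $u(t) = e^t u(0)$, $v(t) = v(0)$, so the direction-angle $\sphericalangle(\Uparrow_{x_0}^{\exp_{x_0}(y(t))}, S^p) = \arctan(|v(0)|/(e^t|u(0)|))$ strictly decreases to $0$ and $|y(t)|$ strictly increases whenever $u(0) \neq 0$. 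The component of $\bar X$ tangent to the sphere $S(0, \rho)$ equals $\cos\theta\cdot\bigl(-\mathrm{grad}(\mathrm{dist}(S^p, \cdot))\bigr)$, a positive multiple of the gradient-like field for $-\mathrm{dist}(\Uparrow_{x_0}^K, \cdot)$ supplied by Proposition \ref{Euclidean angle isotopy prop}. Multiply $\bar X$ by a smooth cutoff $\varphi$ equal to $1$ on $B(0, 5r/4)$, vanishing outside $B(0, 7r/4)$, and also vanishing on $\exp_{x_0}^{-1}(B(K, c_0 - \tilde\eta/2) \setminus B(x_0, r/2))$; pushing forward via $\exp_{x_0}$ produces $X$ on $M$.

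Let $\bar H_t$ denote the flow of $X$ on $M \setminus C_{2r}A(\Uparrow_{x_0}^K)$ and choose a continuous $T\colon M \setminus C_{2r}A(\Uparrow_{x_0}^K) \to (0, \infty)$ with the property that, for every $y \in B(x_0, r) \setminus C_rA(\Uparrow_{x_0}^K)$, the orbit $\bar H_{[0, T(y)]}(y)$ has entered the cutoff-vanishing target by time $T(y)$. The existence of such $T(y)$ follows from the monotone behavior above combined with inequality (\ref{bleow c_0 Ineqaul}): once an orbit achieves $\sphericalangle(\Uparrow_{x_0}^\cdot, S^p) \leq \pi/4$ (hence $\sphericalangle(\Uparrow_{x_0}^\cdot, \Uparrow_{x_0}^K) < \tilde\alpha_0$ by (\ref{dfn of alpha_0})) and radius $\rho \geq 3r/4$, one has $\mathrm{dist}_K \leq c_0 - (\rho/3)\cos\tilde\alpha_0 \leq c_0 - (r/4)\cos\tilde\alpha_0 \leq c_0 - \tilde\eta/2$, so the orbit has landed in the target and $X$ has turned off. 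Setting $H_t(y) := \bar H_{t T(y)}(y)$: property (1) follows because $X$ vanishes on $M \setminus B(x_0, 2r)$ and on $B(K, c_0 - \tilde\eta) \setminus B(x_0, r/2) \subset B(K, c_0 - \tilde\eta/2) \setminus B(x_0, r/2)$, so $\bar H_t$ and hence $H_t$ fixes both sets; property (2) follows by design; and the strong-retract property is automatic, since $X$ vanishes on the image $H_1(B(x_0, r) \setminus C_rA(\Uparrow_{x_0}^K))$.

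The main obstacle is producing the continuous $y$-dependent $T(y)$. The angle and radius along orbits of $\bar X$ are strictly monotone off $\mathbb{R}^{q+1}$; combined with continuous dependence of ODE solutions on initial data and the implicit function theorem, the first time at which $\sphericalangle \leq \pi/4$ and $\rho \geq 3r/4$ simultaneously hold is continuous in $y$ on $M \setminus C_{2r}A(\Uparrow_{x_0}^K)$, although this time grows without bound as $y \to C_{2r}A(\Uparrow_{x_0}^K)$. Because $C_{2r}A(\Uparrow_{x_0}^K)$ is excluded from the domain, the blow-up does not impair continuity of $H_t$ on $M \setminus C_{2r}A(\Uparrow_{x_0}^K)$, and the resulting $H_t$ is the required deformation retraction.
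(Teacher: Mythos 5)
Your approach via a single flow of $\bar X(y)=\pi^{\mathbb{R}^{p+1}}(y)$, which simultaneously grows $|y|$ and shrinks the angle to $\mathrm{span}(\Uparrow_{x_0}^K)$, is a genuine alternative to the paper's two-stage construction (a radial push out to radius $\geq r/2$ along radial geodesics, followed by a norm-preserving angle isotopy built from Proposition \ref{Euclidean angle isotopy prop}), and the explicit flow $\dot u=u,\ \dot v=0$, the monotonicity of radius and angle, and the appeal to Inequality \ref{bleow c_0 Ineqaul} are all sound. The gap is in the cutoff. You require $\varphi\equiv 1$ on $B(0,5r/4)$ and, at the same time, $\varphi\equiv 0$ on $\exp_{x_0}^{-1}\bigl(B(K,c_0-\tilde\eta/2)\setminus B(x_0,r/2)\bigr)$. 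These two sets overlap, so no such $\varphi$ exists. Concretely, take $v\in\Uparrow_{x_0}^K$: then $|rv|=r<5r/4$ puts $rv$ in the first set, while Lemma \ref{Taylorpolylemma} (with $\sphericalangle(v,\Uparrow_{x_0}^K)=0$) gives $\mathrm{dist}_K(\exp_{x_0}(rv))\leq c_0-\frac{r}{2}$ for small $r$, and since $\frac{r}{2}>\frac{r}{4}\cos\tilde\alpha_0\geq\frac{\tilde\eta}{2}$ and $|rv|=r>r/2$, the point $rv$ lies in the second set as well.

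The paper avoids this by separating levels: its $\varphi$ equals $1$ on $\overline{B(x_0,r)}\setminus B(K,c_0-\tilde\eta/2)$ (note the excision of the target region) and $0$ on $\bigl(M\setminus B(x_0,2r)\bigr)\cup\bigl(B(K,c_0-\tilde\eta)\setminus B(x_0,r/2)\bigr)$; the two conditions are compatible precisely because the $0$-set uses the stricter level $\tilde\eta$ while the $1$-set is cut at $\tilde\eta/2$, so they are disjoint. You need the analogous two-scale separation: require $\varphi\equiv 1$ on, say, $\overline{B(0,3r/2)}\setminus\exp_{x_0}^{-1}(B(K,c_0-\tilde\eta/2))$ and $\varphi\equiv 0$ on $\exp_{x_0}^{-1}(B(K,c_0-\tilde\eta))$ and outside $B(0,2r)$. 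With that fix you should also revisit the stopping time: once $\varphi$ vanishes only on the stricter level set $B(K,c_0-\tilde\eta)$, an orbit need not reach the $\varphi=0$ locus in finite time (the flow decays as $\varphi\to 0$), so define $T(y)$ as the first arrival time in the larger target $B(K,c_0-\tilde\eta/2)\setminus B(x_0,r/2)$, which your estimates do show happens in finite, continuously varying, time on $B(x_0,r)\setminus C_rA(\Uparrow_{x_0}^K)$, and extend $T$ by $0$ where $X$ already vanishes.
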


\begin{proof}
We construct $H_{t}$ by concatenating two homotopies, which we call the
Radial Homotopy and the Angle Homotopy.

To construct the Radial Homotopy, use radial geodesics from $x_{0}$ to
deform $B(x_{0},2r)\setminus \{x_{0}\}$ onto 
\begin{equation*}
B(x_{0},2r)\setminus B\left( {x_{0},\frac{{1}}{{2}}r}\right) .
\end{equation*}%
Since $C_{2r}A\left( \Uparrow _{x_{0}}^{K}\right) $ is a union of radial
geodesics, this restricts to a deformation of 
\begin{equation*}
B(x_{0},2r)\setminus C_{2r}A(\Uparrow _{x_{0}}^{K})
\end{equation*}%
onto 
\begin{equation*}
B(x_{0},2r)\setminus \left\{ {B\left( {x_{0},\frac{{1}}{{2}}r}\right) \cup
C_{2r}A(\Uparrow _{x_{0}}^{K})}\right\} .
\end{equation*}

To construct the Angle Homotopy, apply Part 2 of Proposition \ref{Euclidean
angle isotopy prop} with $S^{p}$ the unit sphere in $\mathrm{span}\left( {%
\Uparrow _{x_{0}}^{K}}\right) $ and $S^{q}$ the unit sphere in $\mathrm{span}%
\left( A\left( {\Uparrow _{x_{0}}^{K}}\right) \right) .$ This gives an
isotopy $\widehat{\mathcal{A}}_{t}$ of $S_{x_{0}}\setminus \left\{ S^{p}\cup
S^{q}\right\} $ onto a subset of 
\begin{equation*}
\bar{U}_{\frac{\pi }{4}}=\left\{ \left. v\in S_{x_{0}}\text{ }\right\vert 
\text{ }\sphericalangle \left( v,S^{p}\right) \leq \frac{\pi }{4}\right\} .
\end{equation*}%
Extending $\widehat{\mathcal{A}}_{t}$ radially gives an isotopy $\mathcal{A}%
_{t}$ of 
\begin{equation*}
\left\{ \left. v\in T_{x_{0}}M\setminus \mathrm{span}\left\{ {A(\Uparrow
_{x_{0}}^{K})}\right\} \text{ }\right\vert \text{ }{\frac{{1}}{{2}}r}\leq
\left\vert v\right\vert \leq 2r\right\}
\end{equation*}%
to a subset of 
\begin{equation*}
\left\{ \left. \text{ }v\in T_{x_{0}}M\right\vert \text{ }{\frac{{1}}{{2}}r}%
\leq \left\vert v\right\vert \leq 2r\text{ and }\frac{v}{\left\vert
v\right\vert }\in \bar{U}_{\frac{\pi }{4}}\right\} .
\end{equation*}

Pre- and post-composing $\mathcal{A}_{t}$ with $\exp _{x_{0}}$ then gives an
isotopy $\overline{\mathcal{A}}_{t}$ of ${B\left( {x_{0},2r}\right) }%
\setminus C_{2r}A(\Uparrow _{x_{0}}^{K}).$ By Inequalities \ref{dfn of
alpha_0} and \ref{bleow c_0 Ineqaul}, $\overline{\mathcal{A}}_{1}$ takes ${%
B\left( {x_{0},2r}\right) }\setminus \left\{ {B\left( {x_{0},\frac{{1}}{{2}}r%
}\right) \cup }C_{2r}A(\Uparrow _{x_{0}}^{K})\right\} $ to a subset of 
\begin{equation}
B\left( K,c_{0}-\frac{r}{3}\cos \tilde{\alpha}_{0}\right) \setminus B\left( {%
x_{0},\frac{{1}}{{2}}r}\right) \subset B\left( K,c_{0}-\frac{\tilde{\eta}}{2}%
\right) \setminus B\left( {x_{0},\frac{{1}}{{2}}r}\right) .
\label{into eta/2 in equal}
\end{equation}

Let $X$ be the vector field whose flow gives $\overline{\mathcal{A}}_{t},$
and let $\varphi :M\longrightarrow \mathbb{R}$ be $C^{\infty }$ and satisfy 
\begin{equation}
\varphi \left( x\right) =\left\{ 
\begin{array}{ll}
1 & \text{for }x\in \overline{B\left( x_{0},r\right) }\setminus B\left(
K,c_{0}-\frac{\tilde{\eta}}{2}\right) \\ 
0 & \text{for }x\in \left\{ M\setminus B\left( x_{0},2r\right) \right\} \cup
\left\{ B\left( K,c_{0}-\tilde{\eta}\right) \setminus B\left( x_{0},\frac{r}{%
2}\right) \right\} .%
\end{array}%
\right.  \label{dfn of phi}
\end{equation}%
Let $\widetilde{\mathcal{A}}_{t}$ be the flow generated by $\varphi X.$ Then 
$\widetilde{\mathcal{A}}_{t}$ fixes $\left\{ M\setminus B\left(
x_{0},2r\right) \right\} \cup \left\{ B\left( K,c_{0}-\tilde{\eta}\right)
\setminus B\left( x_{0},\frac{r}{2}\right) \right\} $.

It follows from \ref{into eta/2 in equal} and Part 2 of Proposition \ref%
{Euclidean angle isotopy prop} that $\widetilde{\mathcal{A}}_{1}$ takes ${%
B\left( {x_{0},r}\right) }\setminus \left\{ {B\left( {x_{0},\frac{{1}}{{2}}r}%
\right) \cup }C_{r}A(\Uparrow _{x_{0}}^{K})\right\} $ to a subset of $%
B\left( K,c_{0}-\frac{\tilde{\eta}}{2}\right) \setminus B\left( {x_{0},\frac{%
{1}}{{2}}r}\right) .$

Thus concatenating $\widetilde{\mathcal{A}}_{t}$ with the Radial Homotopy,
yields the desired homotopy $H_{t}.$
\end{proof}

To finish the proof of Case 2, we note that since $H_{t}$ fixes $\left\{
B\left( K,c_{0}-\tilde{\eta}\right) \setminus B\left( x_{0},\frac{r}{2}%
\right) \right\} $ and $\tilde{\eta}\leq \eta ,$ Statement \ref{local red
consq Statement} gives%
\begin{equation*}
H_{t}|_{\mathrm{image}\left( \iota \right) \cap M\setminus B(x_{0},r)}=%
\mathrm{id.}
\end{equation*}%
Combining this with \ref{local red consq Statement} we get,%
\begin{equation*}
H_{t}|_{\mathrm{image}\left( \iota \right) \cap M\setminus
B(x_{0},r)}\subset B\left( K,c_{0}-\eta \right) .
\end{equation*}%
By Statement \ref{tranverslity statement},%
\begin{equation*}
\left\{ \iota _{1}\left( {E^{k}}\right) {\cap B\left( {x_{0},2r}\right) }%
\right\} \bigcap C_{2r}A(\Uparrow _{x_{0}}^{K})=\emptyset .
\end{equation*}%
Combining this with Part 2 of Lemma \ref{case 2 Lemma} and our abuse of
notation that $\iota _{1}=\iota $, it follows that 
\begin{equation*}
H_{1}\left( \mathrm{image}\left( \iota \right) \cap B(x_{0},r)\right)
\subset B\left( K,c_{0}-\frac{\tilde{\eta}}{2}\right) .
\end{equation*}%
Therefore $H_{1}\left( \mathrm{image}\left( \iota \right) \right) \subset
B\left( K,c_{0}\right) ,$ and $H_{t}|_{\iota \left( \partial E^{k}\right)
}\equiv \mathrm{id.}$ Since there are no critical points for $\mathrm{dist}%
_{K}$ on $B\left( K,c_{0}\right) \setminus B\left( K,c_{0}-\varepsilon
_{0}\right) ,$ this completes the proof of Case 2.

\textbf{Case 3:} Suppose $A(\Uparrow _{x_{0}}^{K})\neq \emptyset $ and $%
\partial A(\Uparrow _{x_{0}}^{K})\neq \emptyset $. Let $\iota
:E^{k}\longrightarrow B\left( K,c_{0}+\varepsilon _{0}\right) $ be a cell
with $\iota \left( \partial E^{k}\right) \in B\left( K,c_{0}-\varepsilon
_{0}\right) .$ As before, we construct a homotopy of $\iota $ into $B\left(
K,c_{0}-\varepsilon _{0}\right) $ that fixes $\iota |_{\partial E^{k}}.$

Since $A(\Uparrow _{x_{0}}^{K})\neq \emptyset $ and $\partial A(\Uparrow
_{x_{0}}^{K})\neq \emptyset ,$ $\Uparrow _{x_{0}}^{K},$ viewed as a subset
of the unit tangent sphere at $x_{0}$ and after possibly applying a linear
isometry, satisfies the hypotheses of Theorem \ref{solu flow thm}. Let $%
\alpha _{0}$ be as in Theorem \ref{solu flow thm}.

By Lemma \ref{Taylorpolylemma}, there is an $R_{1}>0$ so that for all $\rho
\in \left[ 0,R_{1}\right] $ and all $y\in B\left( x_{0},4\rho \right)
\setminus $ $B\left( x_{0},\frac{\rho }{\sqrt{10}}\right) $ with $%
\sphericalangle \left( \Uparrow _{x_{0}}^{y},\Uparrow _{x_{0}}^{K}\right)
\in \left( 0,\alpha _{0}\right) ,$%
\begin{equation}
\mathrm{dist}_{K}\left( y\right) <c_{0}-\frac{\rho }{2\sqrt{10}}\cos \left(
\alpha _{0}\right) .  \label{below inequal 2}
\end{equation}

Choose $2r<\min \left\{ \frac{1}{2}\mathrm{inj}_{x_{0}},\text{ }\varepsilon
_{0},R_{1}\right\} .$ Then 
\begin{equation*}
\overline{B(x_{0},2r)}\subset B\left( K,c_{0}+\varepsilon _{0}\right)
\setminus \overline{B\left( K,c_{0}-\varepsilon _{0}\right) }.
\end{equation*}%
So after applying Lemma \ref{localreduction}$,$ we may assume that 
\begin{equation}
\iota \left( E^{k}\right) \subset B\left( K,c_{0}-\eta \right) \cup
B(x_{0},r).  \label{local reduct Ineqaul}
\end{equation}

In particular, 
\begin{equation}
\iota \left( E^{k}\right) \setminus B(x_{0},r)\subset B\left( K,c_{0}-\eta
\right) .  \label{2nd local red Ineqaul}
\end{equation}

As in Case 2, 
\begin{eqnarray*}
\mathrm{dist}_{K}B(x_{0},2r) &\geq &c_{0}-2r \\
&>&c_{0}-\varepsilon _{0},
\end{eqnarray*}%
and $\iota \left( \partial E^{k}\right) \subset B\left( K,c_{0}-\varepsilon
_{0}\right) .$ Thus 
\begin{equation}
\iota \left( \partial E^{k}\right) \cap B(x_{0},2r)=\emptyset .
\label{boundary far}
\end{equation}

Apply Theorem \ref{solu flow thm} with $R=2r$. Let $\Omega $ be the flow of $%
T_{x_{0}}M$ this produces. Let $\Phi $ be the flow of $M$ obtained by pre-
and post-composing $\Omega $ by $\exp _{x_{0}},$ and let $X$ be the vector
field that generates $\Omega .$

If $z\in \iota \left( E^{k}\right) \cap B(x_{0},2r),$ then by Inequalities %
\ref{near origin Inequal}, \ref{away fr org act inequal} and \ref{Less than
alpha_2}, we have%
\begin{equation*}
2\frac{r}{\sqrt{10}}\leq \mathrm{dist}_{x_{0}}\left( \Phi _{1}(z)\right) ,
\end{equation*}%
\begin{equation*}
\sphericalangle \left( \exp _{x_{0}}^{-1}\left( \Phi _{1}(z)\right)
,\Uparrow _{x_{0}}^{K}\right) \leq \alpha _{0}<\frac{\pi }{2},
\end{equation*}%
and%
\begin{equation}
\mathrm{dist}_{x_{0}}\left( \Phi _{t}(z)\right) \leq \mathrm{dist}%
_{x_{0}}\left( z\right) +\frac{2r}{\sqrt{10}},\text{ \label{where end up
Inequal}}
\end{equation}%
for all $t\in \left[ 0,1\right] .$ Therefore by Inequality \ref{below
inequal 2},%
\begin{equation}
\mathrm{dist}_{K}\left( \Phi _{1}(z)\right) <c_{0}-\frac{r}{2\sqrt{10}}\cos
\left( \alpha _{0}\right) .  \label{PHi_1 good Inequal}
\end{equation}%
Set $\tilde{\eta}=\mathrm{min}\left\{ \eta ,\frac{r}{2\sqrt{10}}\cos \left(
\alpha _{0}\right) \right\} .$ It follows that for all $z\in B(x_{0},2r),$
there is a first time $t_{z}$ that varies continuously with $z$ so that

\begin{equation}
\Phi _{t_{z}}(z)\in \overline{B\left( K,c_{0}-\frac{\tilde{\eta}}{2}\right) }%
.  \label{Phi arrives Statement}
\end{equation}

Let $\psi :M\longrightarrow \mathbb{R}$ be $C^{\infty }$ and satisfy 
\begin{equation*}
\psi =\left\{ 
\begin{array}{ll}
1 & B\left( x_{0},\frac{7}{4}r\right) \\ 
0 & M\setminus B\left( x_{0},\frac{9}{5}r\right) .%
\end{array}%
\right.
\end{equation*}%
Let $\tilde{\Phi}$ be the flow of $\psi X,$ and set 
\begin{equation*}
\Upsilon _{t}\left( z\right) =\tilde{\Phi}_{t_{z}\cdot t}(z).
\end{equation*}%
Combining the definition of $\Upsilon $ with Inequality \ref{where end up
Inequal}, we have 
\begin{equation*}
\Upsilon _{t}\left( z\right) =\Phi _{t_{z}\cdot t}(z),
\end{equation*}%
for all $t\in \left[ 0,1\right] $ and all $z\in B(x_{0},r).$ Therefore by %
\ref{Phi arrives Statement}, we have%
\begin{equation*}
\Upsilon _{1}(z)\in \overline{B\left( K,c_{0}-\frac{\tilde{\eta}}{2}\right) }
\end{equation*}%
for all $z\in B(x_{0},r).$

For $z\in \iota \left( E^{k}\right) \setminus B(x_{0},r)$, \ref{2nd local
red Ineqaul} gives 
\begin{equation*}
z\in B\left( K,c_{0}-\eta \right) \subset \overline{B\left( K,c_{0}-\frac{%
\tilde{\eta}}{2}\right) }.
\end{equation*}%
So $t_{z}=0,$ and 
\begin{equation*}
\Upsilon _{t}\left( z\right) =\Phi _{t_{z}\cdot t}(z)=z
\end{equation*}%
for all $t.$

Thus $\iota :E^{k}\longrightarrow M$ is homotopic to a cell in $B\left(
K,c_{0}\right) $ via a homotopy that fixes $\partial E^{k}.$ Since there are
no critical points for $\mathrm{dist}_{K}$ on $B\left( K,c_{0}\right)
\setminus B\left( K,c_{0}-\varepsilon _{0}\right) ,$ it follows that we can
further deform $\iota $ into $B\left( K,c_{0}-\varepsilon _{0}\right) .$ $%
\square $

\section{Lemmas Related to Conjugate Points\label{conj section}}


In this section, we prove a technical estimate, Lemma \ref{Hlemma} (below),
about conjugate points. We then use Lemma \ref{Hlemma} in the next section
to prove Theorem \ref{thm2}.

Throughout this section, suppose $x_{0}$ is a critical point for $\mathrm{%
dist}_{K}$ and $v\in \Uparrow _{x_{0}}^{K}$with $\mathrm{dist}_{K}\left(
x_{0}\right) =c_{0}.$ Let $\gamma _{v}:[0,c_{0}]\longrightarrow M$ be the
segment from $\gamma _{v}(0)=x_{0}$ to $\gamma _{v}(c_{0})=p\in K$ with $%
\gamma _{v}^{\prime }(0)=v$.

Our first lemma generalizes the fact that Jacobi fields are determined by
their boundary values on intervals that are free of conjugate points.

\begin{lemma}
\label{jacobifield} Suppose $w\in S_{x_{0}}$ is orthogonal to $\ker \left(
d\exp _{x_{0}}\right) _{v}.$ Then there is a unique Jacobi field $J_{w}$
along $\gamma _{v}$ so that 
\begin{equation*}
J_{w}\left( 0\right) =w\text{ \ and \ }J_{w}\left( c_{0}\right) =0.
\end{equation*}
\end{lemma}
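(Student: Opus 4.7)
The plan is to encode the claim as a linear algebra statement about a boundary-value evaluation map for Jacobi fields. Let $\mathcal{J}_0$ denote the $n$-dimensional space of Jacobi fields $J$ along $\gamma_v$ with $J(c_0)=0$, parameterized by $\xi=J'(c_0)\in T_pM$, and define the linear map $A:T_pM\to T_{x_0}M$ by $A(\xi)=J(0)$. The existence half of the lemma then reduces to showing that $\mathrm{image}(A)=(\ker(d\exp_{x_0})_{c_0v})^{\perp}$.

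The main step is to identify the adjoint $A^*$. I would use the fact that for any two Jacobi fields $J_1,J_2$ along $\gamma_v$ the Wronskian $W(J_1,J_2):=\langle J_1',J_2\rangle-\langle J_1,J_2'\rangle$ is constant in $t$. Take $J_1$ to be the Jacobi field with $J_1(0)=0$ and $J_1'(0)=\eta$, so that $J_1(c_0)=(d\exp_{x_0})_{c_0v}(c_0\eta)$, and let $J_2\in\mathcal{J}_0$ correspond to $\xi\in T_pM$. Evaluating $W(J_1,J_2)$ at $t=0$ and at $t=c_0$ and equating the two values yields
\[
\langle\eta,A\xi\rangle_{x_0}=-c_0\bigl\langle (d\exp_{x_0})_{c_0v}(\eta),\xi\bigr\rangle_p,
\]
so that, after the usual identifications, $A^*=-c_0\,(d\exp_{x_0})_{c_0v}$. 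Consequently $\mathrm{image}(A)=(\ker A^*)^{\perp}=(\ker(d\exp_{x_0})_{c_0v})^{\perp}$, and every $w$ orthogonal to $\ker(d\exp_{x_0})_{c_0v}$ admits some $\xi_w\in T_pM$ with $A(\xi_w)=w$, producing the required Jacobi field $J_w$.

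For uniqueness, I would reverse the geodesic, viewing $\tilde\gamma(t):=\gamma_v(c_0-t)$ as based at $p$, to identify $\ker A$ with the analogous kernel of the exponential map at $p$, which by the symmetry of conjugate points along a geodesic has the same dimension as $\ker(d\exp_{x_0})_{c_0v}$. Rank-nullity then shows that $A$ restricts to an isomorphism $(\ker A)^{\perp}\to\mathrm{image}(A)$, pinning down a unique $\xi_w\in(\ker A)^{\perp}$ and hence a unique $J_w$. I expect the only real obstacle to be the Wronskian-adjoint computation; once $A^*$ is identified with a multiple of $(d\exp_{x_0})_{c_0v}$, both existence and the description of $\mathrm{image}(A)$ follow by pure linear algebra.
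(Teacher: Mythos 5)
Your proof is correct and rests on the same core idea as the paper's---the Wronskian identity for Jacobi fields---but packages it more cleanly via the adjoint of a boundary-value map. The paper works directly with the subspaces $\mathcal{N}$ (Jacobi fields vanishing at $0$ and $c_0$) and $\mathcal{P}$ (Jacobi fields $P$ with $P(c_0)=0$ and $P'(c_0)\perp\{N'(c_0):N\in\mathcal{N}\}$): it shows the evaluation map $P\mapsto P(0)$ is injective on $\mathcal{P}$, uses the Wronskian to establish $\{P(0):P\in\mathcal{P}\}\subset(\ker(d\exp_{x_0})_v)^\perp$, and then invokes a dimension count to force equality. Your version extracts $A^*=-c_0\,(d\exp_{x_0})_{c_0v}$ from that same Wronskian computation and concludes $\mathrm{image}(A)=(\ker A^*)^\perp$ in one step, so no separate dimension count is needed for existence; and your $(\ker A)^\perp\subset T_pM$ is precisely the set of $P'(c_0)$ for $P\in\mathcal{P}$, so both proofs single out the same canonical representative. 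One point worth making explicit in either treatment: when $\gamma_v(c_0)$ is conjugate to $x_0$ the Jacobi field with the stated boundary values is not literally unique (one may add any $N\in\mathcal{N}$), and what both proofs actually deliver is the unique representative with $J'(c_0)\in(\ker A)^\perp$, equivalently $J\in\mathcal{P}$. This imprecision is harmless for the application, since the quantity $-g(J_w'(0),J_w(0))$ that feeds into Lemma~\ref{Hlemma} is unchanged by adding an $N\in\mathcal{N}$, because $g(N'(0),w)=0$ for $w\perp\ker(d\exp_{x_0})_v$.
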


\begin{proof}
Let $\mathcal{N}$ be the family of Jacobi fields $N$ so that 
\begin{equation*}
N\left( 0\right) =N\left( c_{0}\right) =0.
\end{equation*}%
Let $\mathcal{P}$ be the family of Jacobi fields $P$ so that 
\begin{equation*}
P\left( c_{0}\right) =0\text{ and }g\left( P^{^{\prime }}\left( c_{0}\right)
,N^{^{\prime }}\left( c_{0}\right) \right) =0\text{ for all }N\in \mathcal{N}%
.
\end{equation*}%
We have 
\begin{equation}
\ker \left( d\exp _{x_{0}}\right) _{v}=\left\{ \left. N^{^{\prime }}\left(
0\right) \text{ }\right\vert \text{ }N\in \mathcal{N}\right\} .
\label{character of kern eqn}
\end{equation}

Next we claim that 
\begin{equation}
\left\{ \left. P\left( 0\right) \text{ }\right\vert \ P\in \mathcal{P}%
\right\} \text{ is the orthogonal complement of }\ker \left( d\exp
_{x_{0}}\right) _{v}.  \label{orthog statement}
\end{equation}

First observe that the evaluation map 
\begin{equation}
P\mapsto P\left( 0\right) \text{ is injective.\label{eval statement}}
\end{equation}%
Indeed, if $P\left( 0\right) =0,$ then $P\in \mathcal{N}.$ Combined with $%
g\left( P^{^{\prime }}\left( c_{0}\right) ,N^{^{\prime }}\left( c_{0}\right)
\right) =0$ for all $N\in \mathcal{N},$ it follows that $P\equiv 0,$ and the
evaluation map is indeed injective.

It follows that 
\begin{equation*}
\mathrm{\dim }\left\{ \left. P\left( 0\right) \text{ }\right\vert \ P\in 
\mathcal{P}\right\} =\dim \left( \mathcal{P}\right) =n-1-\mathrm{\dim }\,%
\mathcal{N}.
\end{equation*}

For $P\in \mathcal{P}$ and $N\in \mathcal{N},$ we have 
\begin{eqnarray*}
\frac{\partial }{\partial t}\left( g\left( P,N^{^{\prime }}\right) -g\left(
P^{^{\prime }},N\right) \right) &=&g\left( P,N^{\prime \prime }\right)
-g\left( P^{\prime \prime },N\right) \\
&=&R\left( N,\dot{\gamma}_{v},\dot{\gamma}_{v},P\right) -R\left( P,\dot{%
\gamma}_{v},\dot{\gamma}_{v},N\right) \\
&=&0.
\end{eqnarray*}%
Since $P\left( c_{0}\right) =N\left( c_{0}\right) =0$ and $N\left( 0\right)
=0,$ 
\begin{equation*}
\left. g\left( P,N^{^{\prime }}\right) \right\vert _{c_{0}}=\left. g\left(
P^{^{\prime }},N\right) \right\vert _{c_{0}}=0,
\end{equation*}%
and 
\begin{equation}
\left. g\left( P,N^{^{\prime }}\right) \right\vert _{0}=\left. g\left(
P^{^{\prime }},N\right) \right\vert _{0}=0.  \label{P perp to N' eqn}
\end{equation}%
Together, Equations \ref{character of kern eqn} and \ref{P perp to N' eqn}
give us that $\left\{ P\left( 0\right) |\ P\in \mathcal{P}\right\} $ is
orthogonal to $\ker \left( d\exp _{x_{0}}\right) _{v}.$ On the other hand,
both $\left\{ P\left( 0\right) |\ P\in \mathcal{P}\right\} $ and$\left\{
\ker \left( d\exp _{x_{0}}\right) _{v}\right\} ^{\perp }$ have dimension $%
n-1-\mathrm{\dim }$\thinspace $\mathcal{N},$ thus Statement \ref{orthog
statement} holds.

Now given any $w\bot \ker \left( d\exp _{x_{0}}\right) _{v}$, just choose $%
J_{w}$ to be the unique $P\in \mathcal{P}$ with $P\left( 0\right) =w.$ The
uniqueness of $P$ follows from Statement \ref{eval statement}.
\end{proof}

\begin{lemma}
\label{boundary values lemma}Let $\gamma :\left[ 0,l\right] \longrightarrow
M $ be a segment. For $\varepsilon \in \left( 0,\frac{l}{2}\right] ,$ let 
\begin{equation*}
\mathcal{J}_{\varepsilon }=\left\{ \left. \text{Jacobi fields }J\text{ along 
}\gamma \text{ }\right\vert \text{ }\left\vert J\left( 0\right) \right\vert
=\left\vert J\left( \varepsilon \right) \right\vert =1\right\} .
\end{equation*}%
There is a $B>0$ so that for all $\varepsilon \in \left( 0,\frac{l}{2}\right]
$ and all $J\in \mathcal{J}_{\varepsilon },$%
\begin{equation*}
\left\vert J|_{\left[ 0,\varepsilon \right] }\right\vert \leq B.
\end{equation*}
\end{lemma}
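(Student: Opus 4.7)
The plan is to derive an explicit boundary-value formula for $J|_{[0,\varepsilon]}$ in terms of $J(0)$ and $J(\varepsilon)$, and then bound the resulting operators uniformly in $\varepsilon$. Fix a parallel orthonormal frame along $\gamma$ so that each $T_{\gamma(t)}M$ is identified with $\mathbb{R}^n$, and let $\Phi(t)$, $\Psi(t)$ be the matrix-valued solutions of the Jacobi matrix equation $X''(t) + R(t)X(t) = 0$ with initial data $(\Phi(0),\Phi'(0)) = (0,\mathrm{Id})$ and $(\Psi(0),\Psi'(0)) = (\mathrm{Id},0)$. Because $\gamma|_{[0,l]}$ is a segment, $\gamma(0)$ has no conjugate points in $(0,l)$, so $\Phi(\varepsilon)$ is invertible for every $\varepsilon \in (0,l/2]$. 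For any Jacobi field $J$, write $J(t) = \Psi(t)J(0) + \Phi(t)J'(0)$ and solve at $t = \varepsilon$ for $J'(0)$ to obtain, for $t \in [0,\varepsilon]$,
\begin{equation*}
J(t) = \Psi(t)J(0) + \Phi(t)\Phi(\varepsilon)^{-1}\bigl(J(\varepsilon) - \Psi(\varepsilon)J(0)\bigr).
\end{equation*}
For $J \in \mathcal{J}_\varepsilon$ this gives $|J(t)| \le \|\Psi(t)\| + \|\Phi(t)\Phi(\varepsilon)^{-1}\|\bigl(1 + \|\Psi(\varepsilon)\|\bigr)$.

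Since $\Psi$ is continuous on the compact interval $[0,l/2]$, its operator norm there is bounded by some $C_\Psi$. The task reduces to bounding $\|\Phi(t)\Phi(\varepsilon)^{-1}\|$ uniformly on the region $\{0 \le t \le \varepsilon \le l/2\}$, which I would handle by splitting into two regimes. For $\varepsilon \in [\varepsilon_0,l/2]$ with a fixed small $\varepsilon_0 > 0$, the map $(t,\varepsilon) \mapsto \Phi(t)\Phi(\varepsilon)^{-1}$ is continuous on a compact set (with $\Phi(\varepsilon)^{-1}$ well-defined everywhere there by the non-conjugacy above), hence bounded.

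For $\varepsilon \in (0,\varepsilon_0)$ I would exploit the matched vanishing of $\Phi(t)$ and $\Phi(\varepsilon)$ at the origin. Because $R(t)$ is smooth on the compact interval $[0,l]$, integrating the Jacobi matrix equation twice (bootstrapping from $\|\Phi(u)\| \le 2u$ on a small interval) gives a uniform Taylor estimate $\Phi(t) = t\,\mathrm{Id} + t^3 E(t)$ with $\|E(t)\|$ bounded on $[0,\varepsilon_0]$. Then for sufficiently small $\varepsilon$, $\Phi(\varepsilon)^{-1} = \varepsilon^{-1}\bigl(\mathrm{Id} + O(\varepsilon^2)\bigr)$, and for $t \in [0,\varepsilon]$,
\begin{equation*}
\Phi(t)\Phi(\varepsilon)^{-1} = (t/\varepsilon)\bigl(\mathrm{Id} + O(\varepsilon^2)\bigr),
\end{equation*}
whose operator norm is at most $1 + O(\varepsilon_0^2)$. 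Shrinking $\varepsilon_0$ makes this bound at most, say, $2$. Combining the two regimes yields the uniform constant $B$.

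The main technical obstacle is the small-$\varepsilon$ regime, where $\|\Phi(\varepsilon)^{-1}\|$ blows up like $\varepsilon^{-1}$; the point is that for $t \in [0,\varepsilon]$ the simple zero of $\Phi(t)$ at $t = 0$ exactly cancels this singularity, and the uniform higher-order Taylor remainder, available because $R$ is smooth on the compact $[0,l]$, makes the cancellation quantitative. Everything else is standard Jacobi-field bookkeeping together with a compactness argument.
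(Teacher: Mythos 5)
Your proof is correct, but it takes a genuinely different route from the paper. The paper argues by contradiction: if no uniform bound existed, one could extract a sequence $\varepsilon_i\to 0$ and Jacobi fields $J_{\varepsilon_i}\in\mathcal{J}_{\varepsilon_i}$ blowing up at interior maximum points $s_i\in[0,\varepsilon_i]$; two applications of the Mean Value Theorem then force $\frac{d^2}{dt^2}|J_{\varepsilon_i}|^2$ to be of order $-\varepsilon_i^{-2}(|J_{\varepsilon_i}(s_i)|^2-1)$ somewhere, whereas the Jacobi equation caps $|\frac{d^2}{dt^2}|J|^2|$ by a multiple of $\|R\|\cdot|J(s_i)|^2$, and since $\|R\|$ is bounded along the compact segment this is incompatible as $\varepsilon_i\to 0$. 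You instead argue directly: you write every Jacobi field in terms of the transition matrices $\Psi,\Phi$ of the Jacobi ODE, invert the boundary-value map (using that a segment has no interior conjugate points, so $\Phi(\varepsilon)$ is invertible for $\varepsilon\in(0,l)$), and then show that the potentially singular factor $\Phi(t)\Phi(\varepsilon)^{-1}$ is in fact uniformly bounded on $\{0\le t\le\varepsilon\}$ by a compactness argument away from $\varepsilon=0$ and a quantitative Taylor estimate $\Phi(t)=t\,\mathrm{Id}+t^3E(t)$ near $\varepsilon=0$, where the matched simple zeros of $\Phi(t)$ and $\Phi(\varepsilon)$ cancel the $\varepsilon^{-1}$ singularity. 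Both proofs ultimately rest on the same geometric inputs (boundedness of $R$ on the compact segment, absence of interior conjugate points), but yours is constructive and produces an explicit bound via operator norms of $\Psi$ and $\Phi(t)\Phi(\varepsilon)^{-1}$, while the paper's is shorter in bookkeeping at the cost of being a pure existence argument by contradiction. One small point worth stating explicitly in your write-up: $\Phi$ here includes the tangential block (which is just $t\,\mathrm{Id}$ on $\mathrm{span}\{\dot\gamma\}$), so the invertibility claim and the Taylor expansion apply to the full $n\times n$ matrix, not only its normal part; this is consistent but should be said, since the lemma quantifies over all Jacobi fields, not just normal ones.
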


\begin{proof}
Set 
\begin{equation*}
B_{\varepsilon }\equiv \max \left\{ \left. \left\vert J|_{\left[
0,\varepsilon \right] }\right\vert \text{ }\right\vert \text{ }J\in \mathcal{%
J}_{\varepsilon }\right\} .
\end{equation*}%
Then $B_{\varepsilon }$ is a continuous function of $\varepsilon \in \left(
0,\frac{l}{2}\right] .$ So the result follows provided 
\begin{equation*}
\lim_{t\rightarrow 0}\sup_{0<\varepsilon \leq t}B_{\varepsilon }<\infty .
\end{equation*}%
If not, there a sequence of $\varepsilon _{i}\rightarrow 0$ and a sequence
of Jacobi fields $\left\{ J_{\varepsilon _{i}}\right\} _{i=1}^{\infty }$
with 
\begin{eqnarray*}
J_{\varepsilon _{i}} &\in &\mathcal{J}_{\varepsilon _{i}}\text{ and } \\
\left\vert J_{\varepsilon _{i}}\left( s_{i}\right) \right\vert ^{2} &\geq
&i\longrightarrow \infty
\end{eqnarray*}%
for some $s_{i}\in \left[ 0,\varepsilon _{i}\right] .$ Without loss of
generality, we may further assume that 
\begin{equation*}
\left\vert J_{\varepsilon _{i}}\left( s_{i}\right) \right\vert ^{2}\geq
\left\vert J_{\varepsilon _{i}}\left( t\right) \right\vert ^{2}\text{ for
all }t\in \left[ 0,\varepsilon _{i}\right] .
\end{equation*}

By the Mean Value Theorem, 
\begin{equation*}
\frac{d}{dt}\left\vert J_{\varepsilon _{i}}\left( \sigma _{i}\right)
\right\vert ^{2}\geq \frac{\left\vert J_{\varepsilon _{i}}\left(
s_{i}\right) \right\vert ^{2}-1}{s_{i}}\geq \frac{\left\vert J_{\varepsilon
_{i}}\left( s_{i}\right) \right\vert ^{2}-1}{\varepsilon _{i}}
\end{equation*}%
for some $\sigma _{i}\in \left[ 0,s_{i}\right] ,$ and 
\begin{equation*}
\frac{d}{dt}\left\vert J_{\varepsilon _{i}}\left( \tau _{i}\right)
\right\vert ^{2}\leq \frac{1-\left\vert J_{\varepsilon _{i}}\left(
s_{i}\right) \right\vert ^{2}}{\varepsilon _{i}-s_{i}}\leq \frac{%
1-\left\vert J_{\varepsilon _{i}}\left( s_{i}\right) \right\vert ^{2}}{%
\varepsilon _{i}}
\end{equation*}%
for some $\tau _{i}\in \left[ s_{i},\varepsilon _{i}\right] ,$ provided $i$
is large enough so that $\left\vert J_{\varepsilon _{i}}\left( s_{i}\right)
\right\vert ^{2}>1.$

It follows there is a $\kappa _{i}\in \left[ \sigma _{i},\tau _{i}\right] $
such that 
\begin{eqnarray*}
\frac{d}{dt}\frac{d}{dt}\left\vert J_{\varepsilon _{i}}\left( \kappa
_{i}\right) \right\vert ^{2} &\leq &\frac{\frac{1-\left\vert J_{\varepsilon
_{i}}\left( s_{i}\right) \right\vert ^{2}}{\varepsilon _{i}}-\frac{%
\left\vert J_{\varepsilon _{i}}\left( s_{i}\right) \right\vert ^{2}-1}{%
\varepsilon _{i}}}{\tau _{i}-\sigma _{i}} \\
&=&\left( \left\vert J_{\varepsilon _{i}}\left( s_{i}\right) \right\vert
^{2}-1\right) \left( \frac{2}{\varepsilon _{i}}\right) \frac{-1}{\tau
_{i}-\sigma _{i}} \\
&\leq &\left( \left\vert J_{\varepsilon _{i}}\left( s_{i}\right) \right\vert
^{2}-1\right) \left( \frac{2}{\varepsilon _{i}}\right) \frac{-1}{\varepsilon
_{i}} \\
&=&-\frac{2}{\varepsilon _{i}^{2}}\left( \left\vert J_{\varepsilon
_{i}}\left( s_{i}\right) \right\vert ^{2}-1\right) .
\end{eqnarray*}%
On the other hand, 
\begin{eqnarray*}
\left. \frac{d}{dt}\frac{d}{dt}g\left( J_{\varepsilon _{i}},J_{\varepsilon
_{i}}\right) \right\vert _{\kappa _{i}} &=&2g\left( J_{\varepsilon
_{i}}^{\prime \prime }\left( \kappa _{i}\right) ,J_{\varepsilon _{i}}\left(
\kappa _{i}\right) \right) +2g\left( J_{\varepsilon _{i}}^{\prime }\left(
\kappa _{i}\right) ,J_{\varepsilon _{i}}^{\prime }\left( \kappa _{i}\right)
\right) \\
&=&\left. 2R\left( J_{\varepsilon _{i}},\dot{\gamma},\dot{\gamma}%
,J_{\varepsilon _{i}}\right) \right\vert _{\kappa _{i}}+2g\left(
J_{\varepsilon _{i}}^{\prime }\left( \kappa _{i}\right) ,J_{\varepsilon
_{i}}^{\prime }\left( \kappa _{i}\right) \right) .
\end{eqnarray*}%
Combining the previous two displays, 
\begin{eqnarray*}
-\frac{2}{\varepsilon _{i}^{2}}\left( \left\vert J_{\varepsilon _{i}}\left(
s_{i}\right) \right\vert ^{2}-1\right) &\geq &\left. 2R\left( J_{\varepsilon
_{i}},\dot{\gamma},\dot{\gamma},J_{\varepsilon _{i}}\right) \right\vert
_{\kappa _{i}}+2g\left( J_{\varepsilon _{i}}^{\prime }\left( \kappa
_{i}\right) ,J_{\varepsilon _{i}}^{\prime }\left( \kappa _{i}\right) \right)
\\
&\geq &\left. 2R\left( J_{\varepsilon _{i}},\dot{\gamma},\dot{\gamma}%
,J_{\varepsilon _{i}}\right) \right\vert _{\kappa _{i}}.
\end{eqnarray*}%
It follows that 
\begin{equation*}
\left\vert \left. R\left( J_{\varepsilon _{i}},\dot{\gamma},\dot{\gamma}%
,J_{\varepsilon _{i}}\right) \right\vert _{\kappa _{i}}\right\vert \geq 
\frac{1}{\varepsilon _{i}^{2}}\left( \left\vert J_{\varepsilon _{i}}\left(
s_{i}\right) \right\vert ^{2}-1\right) .
\end{equation*}%
But 
\begin{eqnarray*}
\left\vert \left. R\left( J_{\varepsilon _{i}},\dot{\gamma},\dot{\gamma}%
,J_{\varepsilon _{i}}\right) \right\vert _{\kappa _{i}}\right\vert &\leq
&\left\Vert R_{\kappa _{i}}\right\Vert \cdot \left\vert J_{\varepsilon
_{i}}\left( \kappa _{i}\right) \right\vert ^{2} \\
&\leq &\left\Vert R_{\kappa _{i}}\right\Vert \cdot \left\vert J_{\varepsilon
_{i}}\left( s_{i}\right) \right\vert ^{2}.
\end{eqnarray*}%
The previous two displays give 
\begin{eqnarray*}
\frac{1}{\varepsilon _{i}^{2}}\left( \left\vert J_{\varepsilon _{i}}\left(
s_{i}\right) \right\vert ^{2}-1\right) &\leq &\left\Vert R_{\kappa
_{i}}\right\Vert \left\vert J_{\varepsilon _{i}}\left( s_{i}\right)
\right\vert ^{2}\text{ or} \\
\frac{1}{\varepsilon _{i}^{2}}\left( 1-\frac{1}{\left\vert J_{\varepsilon
_{i}}\left( s_{i}\right) \right\vert ^{2}}\right) &\leq &\left\Vert
R_{\kappa _{i}}\right\Vert ,
\end{eqnarray*}%
which yields a contradiction, since as $i\rightarrow \infty ,$ $\left\Vert
R_{\kappa _{i}}\right\Vert \longrightarrow \left\Vert R_{0}\right\Vert $, $%
\varepsilon _{i}\rightarrow 0,$ and\linebreak\ $\frac{1}{\left\vert
J_{\varepsilon _{i}}\left( s_{i}\right) \right\vert ^{2}}\longrightarrow 0.$
\end{proof}

Given $w\in S_{x_{0}}$ and $H\in \mathbb{R},$ we set 
\begin{equation*}
T_{2,H}^{x_{0},w}(t)\equiv \mathrm{dist}_{K}x_{0}-t\cdot \cos
\sphericalangle (w,\Uparrow _{x_{0}}^{K})+\frac{{1}}{{2}}H\cdot t^{2}.
\end{equation*}

\begin{lemma}
\label{Hlemma} Let $c_{0}\equiv \mathrm{dist}_{K}x_{0}$ and let $H$ be:%
\newline
(1) $-g\left( {J_{w}^{\prime }(0),J_{w}(0)}\right) $ if $w\in S_{x_{0}}$ is
orthogonal to $\mathrm{ker}(d\exp _{x_{0}})_{v}$, and $J_{w}$ is as in Lemma %
\ref{jacobifield}. \newline
(2) any number if $w\in S_{x_{0}}$ is not orthogonal to $\mathrm{ker}(d\exp
_{x_{0}})_{v}$.\newline
There exists an interval $[0,m]$, depending on $w$ and $H$, for which 
\begin{equation*}
\mathrm{dist}_{K}\left( {\exp }_{x_{0}}\left( tw\right) \right) \leq
T_{2,H}^{x_{0},w}(t)+o(t^{2}).
\end{equation*}
\end{lemma}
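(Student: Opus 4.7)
The plan is to bound $\mathrm{dist}_{K}(\exp_{x_{0}}(tw))$ above by the length of an explicit geodesic variation ending at a point of $K$, then read off its Taylor expansion to order $t^{2}$. First I would pick $v\in\Uparrow_{x_{0}}^{K}$ realizing $\sphericalangle(w,v)=\sphericalangle(w,\Uparrow_{x_{0}}^{K})$---possible because $\Uparrow_{x_{0}}^{K}$ is a closed (hence compact) subset of $S_{x_{0}}$---and set $p=\gamma_{v}(c_{0})\in K$.

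For any smooth vector field $V$ along $\gamma_{v}$ with $V(0)=w$ and $V(c_{0})=0$, let $F(s,t)=\exp_{\gamma_{v}(t)}(sV(t))$ and let $L(s)$ denote the length of $t\mapsto F(s,t)$. Then $F(s,0)=\exp_{x_{0}}(sw)$ and $F(s,c_{0})=p$, so $\mathrm{dist}_{K}(\exp_{x_{0}}(sw))\leq L(s)$. Since the $s$-curves are geodesics, $\nabla_{s}\partial_{s}F\equiv 0$, and the first and second variation formulas give
\[
L(0)=c_{0},\qquad L'(0)=-\cos\sphericalangle(w,\Uparrow_{x_{0}}^{K}),\qquad L''(0)=I(V^{\perp},V^{\perp}),
\]
where $V^{\perp}$ is the component of $V$ normal to $\dot{\gamma}_{v}$ and $I$ is the index form on $\gamma_{v}$. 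Since $L(s)=c_{0}+L'(0)s+\tfrac{1}{2}L''(0)s^{2}+o(s^{2})$, the lemma reduces to choosing $V$ so that $L''(0)\leq H$.

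For case (1), take $V=J_{w}$ from Lemma \ref{jacobifield}. The tangential part is forced to be $J_{w}^{T}(t)=\cos\sphericalangle(w,v)(1-t/c_{0})\dot{\gamma}_{v}(t)$ by its boundary values and the fact that $g(J_{w},T)$ is affine along a geodesic. Integration by parts using the Jacobi equation yields $I(J_{w}^{\perp},J_{w}^{\perp})=-g({J_{w}^{\perp}}'(0),J_{w}^{\perp}(0))$; rewriting this in terms of $J_{w}'(0)$ and $w$ rather than their normal parts produces an auxiliary term
\[
L''(0)=-g(J_{w}'(0),w)-\frac{\cos^{2}\sphericalangle(w,v)}{c_{0}}=H-\frac{\cos^{2}\sphericalangle(w,v)}{c_{0}}\leq H,
\]
which has the right sign. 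For case (2), by the proof of Lemma \ref{jacobifield}, $\ker(d\exp_{x_{0}})_{v}=\{N'(0):N\in\mathcal{N}\}$ where $\mathcal{N}$ is the space of Jacobi fields along $\gamma_{v}$ vanishing at both ends, so the hypothesis furnishes $N\in\mathcal{N}$ with $g(N'(0),w)\neq 0$. Fix any smooth $Y$ along $\gamma_{v}$ with $Y(0)=w$, $Y(c_{0})=0$ and apply the construction with $V_{\lambda}=Y+\lambda N$, which has the same boundary values. Then $L''(0)=I(Y^{\perp},Y^{\perp})+2\lambda I(Y^{\perp},N)+\lambda^{2}I(N,N)$; the $\lambda^{2}$ term vanishes because $N$ is Jacobi with zero boundary values, while integration by parts identifies the cross term as $-g(w,N'(0))\neq 0$. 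Hence $L''(0)$ is a nonconstant affine function of $\lambda$ and can be pushed below any prescribed $H$.

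The main subtlety I anticipate is the tangential/normal bookkeeping in case (1): the $H$ in the statement is defined through the full inner product $g(J_{w}'(0),J_{w}(0))$, whereas the index form only produces the perpendicular parts, so one must carry along the discrepancy $-\cos^{2}\sphericalangle(w,v)/c_{0}$ and confirm its sign preserves $L''(0)\leq H$.
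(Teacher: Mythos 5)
Your proof is correct, and for case (2) it is genuinely different from — and considerably simpler than — the paper's argument. Case (1) runs along the same lines as the paper (take $V=J_w$ and integrate the index form by parts), although the paper simply drops the tangential term in the second-variation formula rather than tracking the explicit correction $-\cos^2\sphericalangle(w,v)/c_0$; both bookkeeping choices yield $L''(0)\leq H$. The real divergence is in case (2). The paper first treats $w\in\ker(d\exp_{x_0})_v$ by building a broken field $V_\varepsilon$ that equals a rescaled vanishing Jacobi field $J/|J(\varepsilon)|$ on $[\varepsilon,c_0]$ and an interpolating Jacobi field $Y_\varepsilon$ on $[0,\varepsilon]$, shows $-g(J'(\varepsilon),J(\varepsilon))/|J(\varepsilon)|^2\to-\infty$, and controls the extra terms uniformly in $\varepsilon$ via the auxiliary bound of Lemma~\ref{boundary values lemma}; it then patches the general case together with a second Jacobi field $U$ for $w_\perp$ and estimates the cross term. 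Your route replaces this entire limiting construction by a single algebraic observation: with $N\in\mathcal{N}$ chosen so that $g(N'(0),w)\neq 0$ (possible exactly because $w$ is not orthogonal to $\ker(d\exp_{x_0})_v=\{N'(0):N\in\mathcal{N}\}$) and $V_\lambda=Y+\lambda N$, the index form is \emph{affine} in $\lambda$ because $I(N,N)=0$, and the linear coefficient $-2g(w,N'(0))$ is nonzero. Since $N$ vanishes at both endpoints, all the $V_\lambda$ have the correct boundary values, and $L''(0)$ can be pushed below any prescribed $H$ by a suitable choice of $\lambda$, with no uniform Jacobi-field estimates and no $\varepsilon\to 0$ limit. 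This also subsumes the special case $w\in\ker(d\exp_{x_0})_v$ cleanly, since then one can take $N'(0)=w$ directly. What the paper's longer argument buys in exchange is more quantitative information (explicit rates as $\varepsilon\to 0$), but none of that is needed for the statement as posed, and your argument is both correct and shorter.
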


\begin{proof}
Given $w\in S_{x_{0}},$ choose $v\in \Uparrow _{x_{0}}^{K}$ so that 
\begin{equation*}
\sphericalangle \left( w,\Uparrow _{x_{0}}^{K}\right) =\sphericalangle
\left( w,v\right) .
\end{equation*}%
Let $W$ be a vector field along $\gamma _{v}$ with $W(0)=w$ and $W(c_{0})=0,$
and let\linebreak\ ${\tilde{\gamma}:}\left[ 0,c_{0}\right] \times \left(
-\varepsilon ,\varepsilon \right) \longrightarrow M$ be the variation of $%
\gamma _{v}$ obtained by exponentiating $W.$ Then by $1^{st}$--variation,%
\begin{equation*}
\left. \frac{{d\mathrm{Len}(\tilde{\gamma}_{s})}}{{ds}}\right\vert
_{s=0}=-\cos \sphericalangle (w,\Uparrow _{x_{0}}^{K}),{\ }
\end{equation*}%
{and by }$2^{nd}$--{variation,}%
\begin{eqnarray*}
\left. \frac{{d^{2}E(\tilde{\gamma}_{s})}}{{ds^{2}}}\right\vert _{s=0}
&=&\int_{0}^{c_{0}}\left\{ \left\vert W^{\prime }\right\vert ^{2}-R\left( W,%
\dot{\gamma}_{v},\dot{\gamma}_{v},W\right) -\left[ \frac{d}{dt}g\left( W,%
\dot{\gamma}_{v}\right) \right] ^{2}\right\} dt \\
&\leq &\int_{0}^{c_{0}}\left\{ \left\vert W^{\prime }\right\vert
^{2}-R\left( W,\dot{\gamma}_{v},\dot{\gamma}_{v},W\right) \right\} dt.
\end{eqnarray*}%
(See, e.g., \cite{CheegEbin} pages 5, 20.)

Thus it suffices to find a vector field $W$ along $\gamma _{v}$ with $W(0)=w$
and $W(c_{0})=0$ such that $I(W,W)\leq H$, where 
\begin{equation*}
I(W,W)\equiv \int_{0}^{c_{0}}\left\{ \left\vert W^{\prime }\right\vert
^{2}-R\left( W,\dot{\gamma}_{v},\dot{\gamma}_{v},W\right) \right\} dt
\end{equation*}%
is the index form on $\left( W,W\right) .$

Suppose $w$ is orthogonal to $\mathrm{ker}(d\exp _{x_{0}})_{v}$. By Lemma %
\ref{jacobifield}, there is a Jacobi field $J_{w}$ along $\gamma _{v}$ with $%
J_{w}(0)=w$ and $J_{w}(c_{0})=0$. Thus for $H=I(J_{w},J_{w})=-g\left( {\
J_{w}^{\prime }(0),J_{w}(0)}\right) $, the result holds.

Next we consider the special case when $w$ is in $\mathrm{ker}(d\exp
_{x_{0}})_{v}$. In this event, there is a nonzero Jacobi field $J$ along $%
\gamma _{v}$ such that $J(0)=J(c_{0})=0$ and $J^{\prime }(0)=w$. For $%
\varepsilon >0,$ define a vector field $V_{\varepsilon }$ by%
\begin{equation*}
V_{\varepsilon }(t)\equiv 
\begin{cases}
\frac{J(t)}{{|J(\varepsilon )|}} & \text{if }t\in \left[ \varepsilon ,c_{0}%
\right] \vspace*{0.05in} \\ 
Y_{\varepsilon }(t) & \text{if }t\in \lbrack 0,\varepsilon ],%
\end{cases}%
\end{equation*}%
where $Y_{\varepsilon }$ is the Jacobi field with $Y_{\varepsilon
}(\varepsilon )=\frac{{J(\varepsilon )}}{{|J(\varepsilon )|}}$ and $%
Y_{\varepsilon }(0)=J^{\prime }(0)=w$. Then the index form is given by 
\begin{eqnarray*}
I(V_{\varepsilon },V_{\varepsilon }) &=&g\left( {\frac{J^{\prime }(c_{0})}{%
|J(\varepsilon )|},\frac{{J(}c_{0}{)}}{{|J(\varepsilon )|}}}\right) -g\left( 
{\frac{{J^{\prime }(\varepsilon )}}{{|J(\varepsilon )|}},\frac{{%
J(\varepsilon )}}{{|J(\varepsilon )|}}}\right) \\[14pt]
&&+g\left( {Y_{\varepsilon }^{\prime }(\varepsilon ),Y_{\varepsilon
}(\varepsilon )}\right) -g\left( {Y_{\varepsilon }^{\prime
}(0),Y_{\varepsilon }(0)}\right) \\
&=&-\frac{{1}}{{|J(\varepsilon )|^{2}}}g\left( {J^{\prime }(\varepsilon
),J(\varepsilon )}\right) +g\left( {Y_{\varepsilon }^{\prime }(\varepsilon ),%
\frac{{J(\varepsilon )}}{{|J(\varepsilon )|}}}\right) -g\left( {%
Y_{\varepsilon }^{\prime }(0),J^{\prime }(0)}\right) .
\end{eqnarray*}%
The limit of the first term is 
\begin{eqnarray}
\lim_{\varepsilon \rightarrow 0}\frac{{-g\left( {J^{\prime }(\varepsilon
),J(\varepsilon )}\right) }}{{|J(\varepsilon )|^{2}}} &=&-\lim_{\varepsilon
\rightarrow 0}\frac{{g\left( {J^{\prime \prime }(\varepsilon ),J(\varepsilon
)}\right) +g\left( {J^{\prime }(\varepsilon ),J^{\prime }(\varepsilon )}%
\right) }}{{2g\left( {J^{\prime }(\varepsilon ),J(\varepsilon )}\right) }} 
\notag \\[14pt]
&=&-\infty ,  \label{the one that goes to inf inequal}
\end{eqnarray}%
since $J^{\prime }(0)=w\neq 0$ and $J\left( 0\right) =0.$

Given any $C>0,$ it follows that there is an $\varepsilon >0$ so that 
\begin{equation*}
I(V_{\varepsilon },V_{\varepsilon })\leq -C,
\end{equation*}%
provided we can find bounds on 
\begin{equation*}
g\left( {\ Y_{\varepsilon }^{\prime }(\varepsilon ),\frac{{J(\varepsilon )}}{%
{|J(\varepsilon )|}}}\right) \text{ and }g\left( {Y_{\varepsilon }^{\prime
}(0),J^{\prime }(0)}\right) \text{ }
\end{equation*}%
that are independent of $\varepsilon $.

Let $\{E_{i}\}_{i=1}^{n-1}$ be an orthonormal parallel frame for the normal
space of $\gamma _{v}$ with $E_{1}(0)=J^{\prime }(0)$. Write $%
J=\sum_{i=1}^{n-1}f_{i}E_{i}$ where each $f_{i}$ is a smooth function. Since 
$J(0)=0$, $f_{i}(0)=0$ for all $i$. Since $E_{1}(0)=J^{\prime }(0)$, $%
f_{1}^{\prime }(0)=1$ and $f_{i}^{\prime }(0)=0$ for all $i=2,...,n-1$.
Since $J$ is a Jacobi field with $J(0)=0$, 
\begin{equation*}
J^{\prime \prime }(0)=\sum_{i=1}^{n-1}\ f_{i}^{\prime \prime
}(0)E_{i}(0)=-R\left( {J(0),\dot{\gamma}_{v}(0)}\right) \dot{\gamma}%
_{v}(0)=0.
\end{equation*}%
So $f_{i}^{\prime \prime }(0)=0$ for all $i$. It follows that there exists
an interval on which 
\begin{equation*}
f_{1}(t)=t+\mathcal{O}(t^{3})\text{ \ and \ }f_{i}(t)=\mathcal{O}(t^{3})%
\text{ for }i=2,\dots ,n-1.
\end{equation*}%
We use this to approximate $\frac{{J(t)}}{{|J(t)|}}$. First note that 
\begin{equation*}
|J(t)|^{2}=\sum_{i=1}^{n-1}f_{i}^{2}(t)=t^{2}+\mathcal{O}(t^{4})=t^{2}\left( 
{1+\mathcal{O}(t^{2})}\right) .
\end{equation*}%
Taking the square root, we get 
\begin{equation*}
|J(t)|=\sqrt{t^{2}(1+\mathcal{O}(t^{2}))}=t\left( {1+\mathcal{O}(t^{2})}%
\right) =t+\mathcal{O}(t^{3}).
\end{equation*}%
%
%
%
%
%
%
%
%
%
%
%
%
%
%
%
%
%
%
%
%
%
%
%
%
%
%
%
%
%
%
%
%
%
%
%
%
%
%
%
%
%
%
%
%
%
%
%
%
%
%
%
%
%
%
%
%
%
%
%
%
%
%
%
%
%
%
%
%
%
%
%
%
%
%
%
%
%
%
%
%
%
%
%
%
%
%
%
%
%
%
%
%
%
%
%
%
%
%
%
%
%
%
%
%
%
%
%
%
%
%
%
%
%
%
%
%
%
%
%
%
%
%
%
%
%
%
%
%
%
%
%
%
%
%
%
%
%
%
%
%
%
Combining these 
\begin{eqnarray}
\frac{{J(t)}}{{|J(t)|}} &=&\frac{{\sum_{i=1}^{n-1}f_{i}(t)E_{i}(t)}}{{t+%
\mathcal{O}(t^{3})}}  \label{f} \\
&=&\left( 1+\mathcal{O}(t^{2})\right) E_{1}+{\sum_{i=2}^{n-1}\mathcal{O}%
(t^{2})E_{i}(t).}  \notag
\end{eqnarray}

In order to approximate $Y_{\varepsilon }^{\prime }$, we write $%
Y_{\varepsilon }=\sum_{i=1}^{n-1}h_{\varepsilon ,i}E_{i}$, where each $%
h_{\varepsilon ,i}$ is a smooth function that depends on $\varepsilon $. By
Lemma \ref{boundary values lemma}, there is $B>0$ so that for all $%
\varepsilon >0,$ 
\begin{equation}
\left\vert \left. h_{\varepsilon ,i}^{\prime \prime }\right\vert _{\left[
0,\varepsilon \right] }\right\vert =\left\vert \left. g\left( Y_{\varepsilon
}^{\prime \prime },E_{i}\right) \right\vert _{\left[ 0,\varepsilon \right]
}\right\vert =\left\vert \left. R(Y_{\varepsilon },\dot{\gamma}_{v},\dot{%
\gamma}_{v},E_{i})\right\vert _{\left[ 0,\varepsilon \right] }\right\vert
\leq B.  \label{boundong''}
\end{equation}%
Thus 
\begin{equation*}
\left\vert Y_{\varepsilon }^{\prime \prime }\right\vert ^{2}=\left\vert
\sum_{i=1}^{n-1}h_{\varepsilon ,i}^{\prime \prime }E_{i}\right\vert ^{2}\leq
\left( n-1\right) B^{2}.
\end{equation*}%
Since $Y_{\varepsilon }(0)=J^{\prime }(0)$ and $E_{1}(0)=J^{\prime }(0)$, we
have $h_{\varepsilon ,1}(0)=1$ and $h_{\varepsilon ,i}(0)=0$ for $i=2,\dots
,n-1$. Taylor's Theorem combined with Inequality \ref{boundong''} give us an
interval $[0,m]$, independent of $\varepsilon $, on which 
\begin{equation}
h_{\varepsilon ,1}(t)=1+h_{\varepsilon ,1}^{\prime }(0)t+\mathcal{O}(t^{2})
\label{h_1eps eqn}
\end{equation}%
and 
\begin{equation*}
h_{\varepsilon ,i}(t)=h_{\varepsilon ,i}^{\prime }(0)t+\mathcal{O}(t^{2})%
\text{ for }i=2,\dots ,n-1.
\end{equation*}%
Using Equation \ref{f}, we have 
\begin{equation}
\sum_{i=1}^{n-1}h_{\varepsilon ,i}\left( \varepsilon \right) E_{i}\left(
\varepsilon \right) =Y_{\varepsilon }(\varepsilon )=\frac{{J(\varepsilon )}}{%
{|J(\varepsilon )|}}=\left( 1+\mathcal{O}(\varepsilon ^{2})\right) E_{1}+{%
\sum_{i=2}^{n-1}\mathcal{O}(\varepsilon ^{2})E_{i}(\varepsilon ).\label{h_i
epsss eqn}}
\end{equation}

Combining the previous three displays gives 
\begin{equation}
1+h_{\varepsilon ,1}^{\prime }(0)\varepsilon +\mathcal{O}(\varepsilon
^{2})=h_{\varepsilon ,1}(\varepsilon )=1+\mathcal{O}(\varepsilon ^{2})\text{
and\label{h_1 eps ' eqn} }
\end{equation}%
\begin{equation*}
h_{\varepsilon ,i}^{\prime }(0)\varepsilon +\mathcal{O}(\varepsilon
^{2})=h_{\varepsilon ,i}(\varepsilon )=\mathcal{O}(\varepsilon ^{2})\text{
for }i\geq 2.
\end{equation*}%
So 
\begin{equation}
h_{\varepsilon ,i}^{\prime }(0)=\mathcal{O}(\varepsilon )\text{ for all }i.
\label{h_eps'
ineqaul}
\end{equation}%
Combining Equation \ref{h_eps' ineqaul} with $Y_{\varepsilon }^{\prime
}(0)=\sum_{i=1}^{n-1}h_{\varepsilon ,i}^{\prime }(0)E_{i}(0)$ and $%
\left\vert {J^{\prime }(0)}\right\vert =1$ gives 
\begin{equation}
\left\vert g\left( {Y_{\varepsilon }^{\prime }(0),J^{\prime }(0)}\right)
\right\vert ^{2}\leq \left\vert {Y_{\varepsilon }^{\prime }(0)}\right\vert
^{2}\leq \mathcal{O}(\varepsilon ^{2}).  \label{3rdterm}
\end{equation}

Next we estimate $Y_{\varepsilon }^{\prime }(\varepsilon
)=\sum_{i=1}^{n-1}h_{\varepsilon ,i}^{\prime }(\varepsilon
)E_{i}(\varepsilon )$ by bounding $h_{\varepsilon ,i}^{\prime }(\varepsilon
) $. Combining Inequality \ref{boundong''} with the fact that $%
h_{\varepsilon ,i}^{\prime }(0)=\mathcal{O}(\varepsilon )$, we have 
\begin{equation*}
\left\vert h_{\varepsilon ,i}^{\prime }(\varepsilon )\right\vert =\left\vert
h_{\varepsilon ,i}^{\prime }(0)+\int_{0}^{\varepsilon }h_{\varepsilon
,i}^{\prime \prime }(t)dt\right\vert \leq \mathcal{O}(\varepsilon ).
\end{equation*}%
Thus 
\begin{equation}
\left\vert g\left( {Y_{\varepsilon }^{\prime }(\varepsilon ),\frac{%
J(\varepsilon )}{|J(\varepsilon )|}}\right) \right\vert \leq |Y_{\varepsilon
}^{\prime }(\varepsilon )|\leq \mathcal{O}(\varepsilon ).  \label{2ndterm}
\end{equation}

Combining (\ref{the one that goes to inf inequal}), (\ref{3rdterm}), and (%
\ref{2ndterm}), we have 
\begin{eqnarray*}
I(V_{\varepsilon },V_{\varepsilon }) &=&-\frac{{1}}{{|J(\varepsilon )|^{2}}}%
g\left( {\ J^{\prime }(\varepsilon ),J(\varepsilon )}\right) +g\left( {\
Y_{\varepsilon }^{\prime }(\varepsilon ),\frac{J(\varepsilon )}{%
|J(\varepsilon )|}}\right) -g\left( {\ Y_{\varepsilon }^{\prime
}(0),J^{\prime }(0)}\right) \\[14pt]
&\leq &-\frac{{1}}{{|J(\varepsilon )|^{2}}}g\left( {\ J^{\prime
}(\varepsilon ),J(\varepsilon )}\right) +|\mathcal{O}(\varepsilon
)|\longrightarrow -\infty \text{ \ as }\varepsilon \rightarrow 0.
\end{eqnarray*}

To handle the general case, write $w=w_{\mathrm{tang}}+w_{\bot }$ with $w_{%
\mathrm{tang}}\in \mathrm{ker}(d\exp _{x_{0}})_{v}$ and $w_{\bot }\in \left( 
\mathrm{ker}(d\exp _{x_{0}})_{v}\right) ^{\perp }.$ Let $U$ and $J$ be
Jacobi fields along $\gamma _{v}$ with 
\begin{eqnarray*}
U(0) &=&w_{\bot },\text{ }U(c_{0})=0,\text{ } \\
J(0) &=&J(c_{0})=0,\text{ and }J^{\prime }(0)=w_{tang}.
\end{eqnarray*}%
As before, for $\varepsilon >0,$ we define a vector field $V_{\varepsilon }$
by%
\begin{equation*}
V_{\varepsilon }(t)\equiv 
\begin{cases}
\frac{J(t)}{{|J(\varepsilon )|}}\left\vert w_{\mathrm{tang}}\right\vert & 
\text{if }t\in \left[ \varepsilon ,c_{0}\right] \vspace*{0.05in} \\ 
Y_{\varepsilon }(t) & \text{if }t\in \lbrack 0,\varepsilon ],%
\end{cases}%
\end{equation*}%
where $Y_{\varepsilon }$ is the Jacobi field with $Y_{\varepsilon
}(\varepsilon )=\frac{{J(\varepsilon )}}{{|J(\varepsilon )|}}\left\vert w_{%
\mathrm{tang}}\right\vert $ and $Y_{\varepsilon }(0)=J^{\prime }(0)=w_{%
\mathrm{tang}}$. Notice that $\tilde{W}_{\varepsilon }=U+V_{\varepsilon }$
is a vector field along $\gamma _{v}$ that satisfies 
\begin{eqnarray*}
\tilde{W}_{\varepsilon }(0) &=&w\text{ and} \\
\tilde{W}_{\varepsilon }(c_{0}) &=&0.
\end{eqnarray*}%
So given $H\in \mathbb{R},$ it suffices to show that for $\varepsilon $
sufficiently small, $I(\tilde{W}_{\varepsilon },\tilde{W}_{\varepsilon
})\leq H$. Since 
\begin{equation*}
I(\tilde{W}_{\varepsilon },\tilde{W}_{\varepsilon })=I\left( U,U\right)
+2I\left( U,V_{\varepsilon }\right) +I\left( V_{\varepsilon },V_{\varepsilon
}\right) ,
\end{equation*}%
it follows from the first two cases, that it is sufficient to bound $%
I(U,V_{\varepsilon })$ from above by a constant that is independent of $%
\varepsilon .$

Since ${U(c_{0})=0,}$ 
\begin{equation*}
I(U,V_{\varepsilon })=-g\left( \frac{J^{\prime }(\varepsilon )}{{%
|J(\varepsilon )|}}\left\vert w_{\mathrm{tang}}\right\vert {,U(\varepsilon )}%
\right) +g\left( {Y_{\varepsilon }^{\prime }(\varepsilon ),U(\varepsilon )}%
\right) -g\left( {\ Y_{\varepsilon }^{\prime }(0),U(0)}\right) .
\end{equation*}%
Since $U$ does not depend on $\varepsilon $ and is bounded, Inequalities \ref%
{3rdterm} and \ref{2ndterm} give 
\begin{equation}
\left\vert g\left( {\ Y_{\varepsilon }^{\prime }(\varepsilon ),U(\varepsilon
)}\right) -g\left( {Y_{\varepsilon }^{\prime }(0),U(0)}\right) \right\vert
\leq \mathcal{O}(\varepsilon ).  \label{small guys inequal}
\end{equation}

To estimate $g\left( {\frac{J^{\prime }(\varepsilon )}{{|J(\varepsilon )|}}%
\left\vert w_{\mathrm{tang}}\right\vert ,U(\varepsilon )}\right) $, we write 
$J=\sum_{i=1}^{n-1}f_{i}E_{i}.$ As before, there is an interval on which 
\begin{equation*}
f_{1}(t)=|w_{tang}|\cdot t+\mathcal{O}(t^{3})\text{ \ and \ }f_{i}(t)=%
\mathcal{O}(t^{3})\text{ for }i\geq 2.
\end{equation*}%
Now write $U=\sum_{i=1}^{n-1}k_{i}E_{i}$, where each $k_{i}$ is a smooth
function. Since $U(0)\perp J^{\prime }(0)$, we have $k_{1}(0)=0$. Thus $%
k_{1}(t)=\mathcal{O}(t)$ and $k_{i}(t)=h_{i}(0)+\mathcal{O}(t)$ for $i\geq 2$
on a uniform interval. So for sufficiently small $\varepsilon ,$ 
\begin{eqnarray*}
g\left( {\frac{J^{\prime }(\varepsilon )}{{|J(\varepsilon )|}}\left\vert w_{%
\mathrm{tang}}\right\vert ,U(\varepsilon )}\right) &=&\frac{{\left\vert w_{%
\mathrm{tang}}\right\vert }}{{|J(\varepsilon )|}}\sum_{i=1}^{n-1}f_{i}^{%
\prime }(\varepsilon )k_{i}(\varepsilon ) \\
&=&\frac{{\left\vert w_{\mathrm{tang}}\right\vert }}{{|J(\varepsilon )|}}%
\left[ {\left( {|w_{tang}|+\mathcal{O}(\varepsilon ^{3})}\right) \mathcal{O}%
(\varepsilon )+\sum_{i=2}^{n-1}\mathcal{O}(\varepsilon ^{2})\left( {h_{i}(0)+%
\mathcal{O}(\varepsilon )}\right) }\right] \\
&=&\frac{{\mathcal{O}(\varepsilon )}}{{|J(\varepsilon )|}} \\
&\leq &L,
\end{eqnarray*}%
where $L$ is a constant that is independent of $\varepsilon $.

Combined with Inequality \ref{small guys inequal}, 
\begin{eqnarray}
I(U,V_{\varepsilon }) &=&-g\left( {\ \frac{J^{\prime }(\varepsilon )}{%
|J(\varepsilon )|}\left\vert w_{\mathrm{tang}}\right\vert ,U(\varepsilon )}%
\right) +g\left( {Y_{\varepsilon }^{\prime }(\varepsilon ),U(\varepsilon )}%
\right) -g\left( {Y_{\varepsilon }^{\prime }(0),U}\left( 0\right) \right) 
\notag \\
&\leq &L+|\mathcal{O}(\varepsilon )|.  \label{2nd}
\end{eqnarray}
\end{proof}

\section{Critical Points that Impact the Fundamental Group\label{pi_1 sect}}

In this section we restate Theorem \ref{thm2} and outline its proof.\vspace*{%
0.11in}

\noindent \textbf{Theorem D.}\emph{\ Suppose that the critical points for }$%
\mathrm{dist}\left( K,\cdot \right) $\emph{\ are isolated and that for some }%
$c_{0}>0$\emph{\ and all sufficiently small }$\varepsilon >0,$\emph{\ }%
\begin{equation*}
\pi _{1}(B\left( K,c_{0}+\varepsilon \right) ,B\left( K,c_{0}-\varepsilon
\right) )\neq 0.
\end{equation*}

\emph{Then there is a critical point }$x_{0}$\emph{\ for }$\mathrm{dist}%
\left( K,\cdot \right) $\emph{\ with }$\mathrm{dist}\left( K,x_{0}\right)
=c_{0}$\emph{\ so that there are only two minimal geodesics from }$K$\emph{\
to }$x_{0}$\emph{\ that make angle }$\pi $\emph{\ at }$x_{0}$\emph{.
Moreover, the ends of these geodesic segments are not conjugate along the
segments.}\vspace*{0.11in}

\begin{proof}[Outline of proof]
If for all sufficiently small $\varepsilon ,$ $\pi _{1}(B\left(
K,c_{0}+\varepsilon \right) ,B\left( K,c_{0}-\varepsilon \right) )\neq 0,$
Theorem \ref{mainthm} implies there is a critical point $x_{0}$ for $\mathrm{%
dist}_{K}$ of sub-index $1$ with $\mathrm{dist}_{K}\left( x_{0}\right)
=c_{0}.$ It follows that $\Uparrow _{x_{0}}^{K}$ is a pair of antipodal
points, say $v$ and $-v$.

Let $\gamma _{v}$ be the unique geodesic with $\gamma _{v}^{\prime }\left(
0\right) =v.$ It remains to show that $x_{0}$ is neither conjugate to $%
\gamma _{v}\left( c_{0}\right) $ along $\gamma _{v}$ nor to $\gamma
_{-v}\left( c_{0}\right) $ along $\gamma _{-v}.$ Suppose that $x_{0}$ is
conjugate to $\gamma _{v}\left( c_{0}\right) $ along $\gamma _{v}.$ Then $%
\mathcal{K}\equiv \mathrm{ker}(d\exp _{x_{0}})_{v}$ has dimension $\geq 1,$
and $\mathcal{K}^{\bot },$ the orthogonal complement of $\mathcal{K}$, has
dimension $\leq n-2.$ We will show this implies that for all sufficiently
small\emph{\ }$\varepsilon >0,$\emph{\ }%
\begin{equation*}
\pi _{1}(B\left( K,c_{0}+\varepsilon \right) ,B\left( K,c_{0}-\varepsilon
\right) )=0.
\end{equation*}

Suppose $\iota :E^{1}\longrightarrow B\left( K,c_{0}+\varepsilon \right) $
is a $1$--cell with $\iota \left( \partial E^{1}\right) \in B\left(
K,c_{0}-\varepsilon \right) .$ As in the proof of Theorem \ref{mainthm}, we
apply Lemma \ref{localreduction} to a sufficiently small $r<\frac{1}{4}\cdot 
\mathrm{inj}_{x_{0}}$ to deform $\iota $ so that 
\begin{equation*}
\iota \left( E^{1}\right) \subset \left\{ B\left( K,c_{0}-\frac{r}{2}\right)
\cup B(x_{0},r)\right\} .
\end{equation*}

Since%
\begin{eqnarray*}
\dim E^{1}+\dim \mathcal{K}^{\bot } &\leq &1+n-2 \\
&<&n,
\end{eqnarray*}%
we can use transversality, as in Case 2 of the proof of Theorem \ref{mainthm}%
, to move $\iota $ so that 
\begin{equation*}
\iota \left( E^{1}\right) \subset \left\{ B\left( K,c_{0}-\frac{r}{2}\right)
\cup B(x_{0},r)\right\} \setminus \exp _{x_{0}}\left\{ \left. v\in \mathcal{K%
}^{\bot }\text{ }\right\vert \text{ }\left\vert v\right\vert \leq r\right\} .
\end{equation*}

Then, as in Case 2 of the proof of Theorem \ref{mainthm}, we combine the
proof of Lemma \ref{case 2 Lemma} with Part $2$ of Lemma \ref{Hlemma} to
show that if $r$ is sufficiently small, then we can move $\iota $ so that 
\begin{equation*}
\iota \left( E^{1}\right) \subset B\left( K,c_{0}\right) .
\end{equation*}

This contradicts our hypothesis that $\pi _{1}(B\left( K,c_{0}+\varepsilon
\right) ,B\left( K,c_{0}-\varepsilon \right) )\neq 0$.
\end{proof}

\section{Other Versions of the Connectivity Results\label{alt section}}

We close pointing out that our techniques also yield the following
alternative versions of Theorems \ref{mainthm} and \ref{thm2}.

\begin{theorem}
Suppose $x_{0}$ is an isolated critical point for $\mathrm{dist}\left(
K,\cdot \right) $ with $\mathrm{dist}\left( K,x_{0}\right) =c_{0}$ and
sub-index $\lambda .$ Then for all sufficiently small $\varepsilon >0$, the
inclusion $B\left( K,c_{0}-\varepsilon \right) \hookrightarrow B\left(
K,c_{0}\right) \cup B\left( x_{0},\varepsilon \right) $ is $(\lambda -1)$%
-connected.

That is, 
\begin{equation*}
\pi _{i}(B\left( K,c_{0}\right) \cup B\left( x_{0},\varepsilon \right) ,%
\text{ }B\left( K,c_{0}-\varepsilon \right) )=0
\end{equation*}%
for $i=0,1,\ldots ,(\lambda -1)$.
\end{theorem}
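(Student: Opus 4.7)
My plan is to recycle the three-case analysis from the proof of Theorem \ref{mainthm}, noting that the hypothesis $\iota(E^k)\subset B(K,c_0)\cup B(x_0,\varepsilon)$ is essentially the conclusion of the Local Reduction Lemma, differing only in that $B(K,c_0)$ replaces $B(K,c_0-\eta)$. Thus most of the deformation machinery of Section \ref{proofofresults} transfers without modification, and I will simply skip the initial appeal to Lemma \ref{localreduction}, compensating at the end with the $\alpha$-regular flow on $\overline{B(K,c_0)}\setminus B(K,c_0-\varepsilon_0)$.

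Concretely, I would fix $\varepsilon_0>0$ so that $x_0$ is the only critical point of $\mathrm{dist}_K$ in $\overline{B(K,c_0+\varepsilon_0)}\setminus B(K,c_0-\varepsilon_0)$, and consider a cell $\iota:E^k\longrightarrow B(K,c_0)\cup B(x_0,\varepsilon)$ with $k\leq\lambda-1$ and $\iota(\partial E^k)\subset B(K,c_0-\varepsilon)$. If $A(\Uparrow_{x_0}^{K})=\emptyset$ (so $\lambda=n$), Lemma \ref{Taylorpolylemma} gives $B(x_0,\varepsilon)\setminus\{x_0\}\subset B(K,c_0)$ for $\varepsilon$ small, and transversality (valid for $k\leq n-1$) pushes $\iota$ off $\{x_0\}$, placing $\iota(E^k)\subset B(K,c_0)$. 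If $A(\Uparrow_{x_0}^{K})\neq\emptyset$ but $\partial A(\Uparrow_{x_0}^{K})=\emptyset$, so $\lambda=n-\dim\mathrm{span}\,A(\Uparrow_{x_0}^{K})$, I would shrink $\varepsilon$ below the constants $R_1,\,\mathrm{inj}_{x_0}/2$ appearing in the setup for Case 2, use transversality (the dimension count $k+\dim\mathrm{span}\,A(\Uparrow_{x_0}^K)\leq\lambda-1+\dim\mathrm{span}\,A(\Uparrow_{x_0}^K)<n$ is the same as before) to achieve $\iota(E^k)\cap C_{\varepsilon}A(\Uparrow_{x_0}^{K})=\emptyset$, and apply the angle homotopy from the proof of Lemma \ref{case 2 Lemma}, bumped to be supported in $B(x_0,\varepsilon)$, to move the local part of $\iota(E^k)$ into $B(K,c_0-\tilde\eta)$. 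Finally, if $\partial A(\Uparrow_{x_0}^{K})\neq\emptyset$ (so $\lambda=\infty$ and every $k$ must be handled), I would apply Theorem \ref{solu flow thm} with $R=\varepsilon$, transfer the resulting flow $\Omega$ to $M$ via $\exp_{x_0}$, and bump it off outside $B(x_0,2\varepsilon)$; just as in Case 3 of the proof of Theorem \ref{mainthm}, this yields $\iota(E^k)\cap B(x_0,\varepsilon)\subset B(K,c_0-\tilde\eta)$. In all three situations $\iota(E^k)\subset B(K,c_0)$, after which I would finish with the $\alpha_0$-regular gradient-like flow of Proposition 47 of \cite{Pet} on the critical-point-free region $\overline{B(K,c_0)}\setminus B(K,c_0-\varepsilon_0)$, deforming $\iota(E^k)$ into $B(K,c_0-\varepsilon)$ while fixing $\iota|_{\partial E^k}$.

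The main obstacle I expect is verifying that every stage of the homotopy stays inside the prescribed set $B(K,c_0)\cup B(x_0,\varepsilon)$ rather than escaping into the larger $B(K,c_0+\varepsilon_0)$ as in Theorem \ref{mainthm}. Two features save us: the local deformations in Cases 2 and 3 are, by construction, supported in $B(x_0,\varepsilon)$ via bump functions exactly as in Lemma \ref{case 2 Lemma} and Theorem \ref{solu flow thm}, so they move no point outside that ball; and the concluding gradient-like flow strictly decreases $\mathrm{dist}_K$, so it preserves $B(K,c_0)$. Combining these, the entire homotopy is supported in $B(K,c_0)\cup B(x_0,\varepsilon)$, which is precisely what is required for vanishing of $\pi_i(B(K,c_0)\cup B(x_0,\varepsilon),\,B(K,c_0-\varepsilon))$ in the relevant range.
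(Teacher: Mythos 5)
Your overall blueprint—reuse the three cases of Theorem \ref{mainthm}, localize the deformations near $x_0$, and finish with the gradient-like flow—is the right idea, and your Case~1 argument is fine.  But the crucial step, ``after the local deformation $\iota(E^k)\subset B(K,c_0)$,'' is not justified by what you have written, and this is exactly where the containment in $B(K,c_0)\cup B(x_0,\varepsilon)$ is at stake.  In Case~3 you apply Theorem \ref{solu flow thm} with $R=\varepsilon$ and bump the flow off outside $B(x_0,2\varepsilon)$.  That homotopy is supported in $B(x_0,2\varepsilon)$, not in $B(x_0,\varepsilon)$ as your concluding paragraph asserts, and Inequality \ref{near origin Inequal} already shows that points of $B(x_0,\varepsilon)$ are carried out to radius $|y|+\varepsilon/\sqrt{10}>\varepsilon$; during that passage $\mathrm{dist}_K$ need not yet have dropped below $c_0$, so the homotopy leaves $B(K,c_0)\cup B(x_0,\varepsilon)$.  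In Case~2 the opposite problem occurs: if you really do bump the angle/radial homotopy so its support lies in $B(x_0,\varepsilon)$, then the bump function must vanish near $S(x_0,\varepsilon)$, and the points of $\iota(E^k)$ close to $S(x_0,\varepsilon)$ with $\mathrm{dist}_K\geq c_0$ (nothing in the hypothesis rules these out) are essentially fixed, hence stay outside $B(K,c_0)$.  Either way, $H_1(\iota(E^k))\subset B(K,c_0)$ does not follow.

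The missing ingredient is a compactness argument that plays the role the Local Reduction Lemma played in Theorem \ref{mainthm}.  Since $\iota(E^k)$ is compact and contained in the open set $B(K,c_0)\cup B(x_0,\varepsilon)$, there exist $\delta>0$ and $r<\varepsilon$ with
\begin{equation*}
\iota(E^k)\subset B(K,c_0-\delta)\cup B(x_0,r).
\end{equation*}
This is precisely the form produced by Lemma \ref{localreduction}; now take $r$ small enough (in particular $2r(1+1/\sqrt{10})<\varepsilon$ for Case~3) and replace $\tilde\eta$ in the bump constructions by $\min\{\tilde\eta,\delta\}$, so that $H_t$ (respectively $\Phi_t$) is supported in $B(x_0,2r)\subset B(x_0,\varepsilon)$ and fixes $\iota(E^k)\setminus B(x_0,r)\subset B(K,c_0-\delta)\setminus B(x_0,r/2)$.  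With this replacement, every stage of the homotopy lies in $B(x_0,\varepsilon)\cup\bigl(B(K,c_0-\delta)\bigr)\subset B(K,c_0)\cup B(x_0,\varepsilon)$, and after it $\iota(E^k)\subset B(K,c_0)$, so the concluding gradient-like flow applies as you describe.  Without this step, your argument is incomplete.
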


\begin{theorem}
Suppose $x_{0}$ is an isolated critical point for $\mathrm{dist}\left(
K,\cdot \right) $ with $\mathrm{dist}\left( K,x_{0}\right) =c_{0}$ and that
for all sufficiently small $\varepsilon >0,$ 
\begin{equation*}
\pi _{1}(B\left( K,c_{0}\right) \cup B\left( x_{0},\varepsilon \right)
,B\left( K,c_{0}-\varepsilon \right) )\neq 0.
\end{equation*}

Then there are only two minimal geodesics from $K$ to $x_{0}$ that make
angle $\pi $ at $x_{0}$. Moreover, the ends of these geodesic segments are
not conjugate along the segments.
\end{theorem}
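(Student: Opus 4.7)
The plan is to mirror the outline given for Theorem \ref{thm2}, substituting the first theorem of this section (the local version of the Connectedness Theorem) for Theorem \ref{mainthm}. I apply that local theorem directly to the isolated critical point $x_0$: the assumption that $\pi_1(B(K,c_0)\cup B(x_0,\varepsilon), B(K,c_0-\varepsilon))\neq 0$ for all sufficiently small $\varepsilon$ forces the sub-index $\lambda$ of $x_0$ to satisfy $\lambda\leq 1$. Since $\lambda=n$ requires $A(\Uparrow_{x_0}^K)=\emptyset$ (giving $\lambda=n\geq 2$ when $n\geq 2$) and $\lambda=\infty$ is excluded, we have $\lambda = n-\dim\mathrm{span}\{A(\Uparrow_{x_0}^K)\}=1$. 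Combined with $\partial A(\Uparrow_{x_0}^K)=\emptyset$ and the hemisphere-intersection form of $A$, a short convexity argument gives $A(\Uparrow_{x_0}^K)=\mathrm{span}(\Uparrow_{x_0}^K)^{\perp}\cap S_{x_0}$, so $\dim\mathrm{span}(\Uparrow_{x_0}^K)=1$. Criticality of $x_0$ rules out $\Uparrow_{x_0}^K$ lying in an open hemisphere, so $\Uparrow_{x_0}^K=\{v,-v\}$ for some unit $v$, yielding exactly two minimal geodesics from $K$ to $x_0$ meeting at angle $\pi$.

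For the non-conjugacy claim I argue by contradiction. Suppose $x_0$ is conjugate to $\gamma_v(c_0)$ along $\gamma_v$; the case of $\gamma_{-v}$ is symmetric. Set $\mathcal{K}=\ker(d\exp_{x_0})_v$, so $\dim\mathcal{K}\geq 1$ and $\dim\mathcal{K}^{\perp}\leq n-2$. I will show this forces $\pi_1(B(K,c_0)\cup B(x_0,\varepsilon), B(K,c_0-\varepsilon))=0$, contradicting the hypothesis. Given a $1$-cell $\iota:E^1\to B(K,c_0)\cup B(x_0,\varepsilon)$ with $\iota(\partial E^1)\subset B(K,c_0-\varepsilon)$, I first choose $r\in(0,\varepsilon)$ small and apply a local reduction analogous to Lemma \ref{localreduction} (adapted to the target $B(K,c_0)\cup B(x_0,\varepsilon)$) to arrange $\iota(E^1)\setminus B(x_0,r)\subset B(K,c_0-\eta)$ for some $\eta>0$. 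Then, since $\dim E^1+\dim\mathcal{K}^{\perp}\leq 1+(n-2)<n$, I use transversality (relative to $\partial E^1$) to deform $\iota$ so that $\iota(E^1)\cap\exp_{x_0}(\mathcal{K}^{\perp}\cap\overline{B(0,r)})=\emptyset$.

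After the transversality step, every point of $\iota(E^1)\cap B(x_0,r)$ has the form $\exp_{x_0}(tw)$ with $w\in S_{x_0}$ not orthogonal to $\mathcal{K}$. For any such $w$, Part $2$ of Lemma \ref{Hlemma} gives, for every $H\in\mathbb{R}$, an interval $[0,m]$ depending on $w$ and $H$ on which
\begin{equation*}
\mathrm{dist}_K(\exp_{x_0}(tw))\leq c_0-t\cos\sphericalangle(w,\Uparrow_{x_0}^K)+\frac{1}{2}Ht^2+o(t^2).
\end{equation*}
I combine an angle-homotopy analogous to the one in Lemma \ref{case 2 Lemma} (pushing the image away from $\exp_{x_0}(\mathcal{K}^{\perp})$ into a compact region of directions bounded away from $\mathcal{K}^{\perp}$) with compactness of that region to extract uniform versions of $m$ and of the $o(t^2)$ remainder, and then choose $H$ sufficiently negative so that $\mathrm{dist}_K(\exp_{x_0}(tw))<c_0$ on the relevant portion of the cell. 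Concatenating this with the radial homotopy of Lemma \ref{case 2 Lemma} and a subsequent gradient-like flow (as in the proof of Lemma \ref{localreduction}) deforms $\iota$ into $B(K,c_0-\varepsilon)$ while fixing $\partial E^1$, producing the contradiction.

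The main obstacle will be uniformizing the estimate from Part $2$ of Lemma \ref{Hlemma} over the compact family of directions surviving the angle-homotopy. Both the interval $[0,m]$ and the $o(t^2)$ remainder depend on $w$ through the construction of the Jacobi fields $U$, $J$, and $Y_{\varepsilon}$ in the proof of that lemma, so continuous dependence of those constructions on $w$, together with the uniform bound provided by Lemma \ref{boundary values lemma}, must be tracked carefully in order to produce a single $t$-interval on which the desired decrease holds over all of $\iota(E^1)\cap B(x_0,r)$.
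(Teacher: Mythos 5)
Your proposal follows the same route that the paper indicates: it applies the first theorem of Section \ref{alt section} to force $\lambda=1$, deduces $\Uparrow_{x_0}^K=\{v,-v\}$, and then mirrors the conjugacy argument from the outline of Theorem \ref{thm2} with Lemma \ref{localreduction}, transversality, Lemma \ref{case 2 Lemma}'s homotopies, and Part 2 of Lemma \ref{Hlemma}. That is essentially the approach the authors intend, and your reconstruction of the first half (identifying $A(\Uparrow_{x_0}^K)$ as $\mathrm{span}(\Uparrow_{x_0}^K)^{\perp}\cap S_{x_0}$ when $\partial A=\emptyset$, ruling out Cases 1 and 3, and using criticality to pass from $\mathrm{span}(\Uparrow_{x_0}^K)=\mathbb{R}v$ to $\Uparrow_{x_0}^K=\{v,-v\}$) is sound.

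There is, however, a concrete slip in the conjugacy half. You write that $\dim\mathcal{K}^{\perp}\leq n-2$ and then conclude, after transversality, that ``every point of $\iota(E^1)\cap B(x_0,r)$ has the form $\exp_{x_0}(tw)$ with $w$ not orthogonal to $\mathcal{K}$.'' These two claims are incompatible when $\dim\mathcal{K}=1$. Since $\mathcal{K}\subset v^{\perp}$, the full orthogonal complement of $\mathcal{K}$ inside $T_{x_0}M$ has dimension $n-\dim\mathcal{K}$, which equals $n-1$ when $\dim\mathcal{K}=1$; a $1$-cell cannot be made transverse to an $(n-1)$-dimensional cone in an $n$-manifold. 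The bound $\dim\mathcal{K}^{\perp}\leq n-2$ is correct only if $\mathcal{K}^{\perp}$ denotes the orthocomplement of $\mathcal{K}$ inside $v^{\perp}$ (this is what the paper uses), and after removing $\exp_{x_0}\bigl(\mathcal{K}^{\perp}\cap v^{\perp}\bigr)$ the surviving directions are \emph{not} all non-orthogonal to $\mathcal{K}$; for example $w=\pm v$ survives and is orthogonal to $\mathcal{K}$. What the transversality actually delivers is a dichotomy: either $w\notin v^{\perp}$, in which case $\cos\sphericalangle(w,\Uparrow_{x_0}^K)>0$ and Lemma \ref{Taylorpolylemma} gives a first-order decrease, or $w\in v^{\perp}$ with a nonzero $\mathcal{K}$-component, in which case Part 2 of Lemma \ref{Hlemma} applies. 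Your angle-homotopy then needs to push directions into a set where one of $|\langle w,v\rangle|$ or the $\mathcal{K}$-component of $w$ is bounded below, and the uniformization of the $o(t^2)$ and of the interval $[0,m]$ in Lemma \ref{Hlemma} must be carried out only over the compact subfamily handled by the second alternative; the first alternative is uniform by Lemma \ref{Taylorpolylemma} directly. You correctly flag the uniformization issue as the main remaining work, but the step ``all surviving $w$ are non-orthogonal to $\mathcal{K}$'' as literally stated is false and must be replaced by this two-case analysis.
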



\begin{thebibliography}{99}
\bibitem{BrockJan} T. Brocker and K. Janich, \emph{Introduction to
Differential Topology}, Cambridge University Press (2007), 143-152.

\bibitem{BBI} D. Burago, Y. Burago, and S. Ivanov, \emph{A course in metric
geometry}, volume 33 of \emph{Graduate Studies in Mathematics}, AMS (2001),
122.

\bibitem{BGP} Y. Burago, M. Gromov, and G. Perelman, \emph{A.D. Alexandrov
spaces with curvatures bounded from below}, I, Uspechi Mat. Nauk. \textbf{47}
(1992), 3--51.

\bibitem{CheegEbin} J. Cheeger and D. Ebin, \emph{Comparison Theorems in
Riemannian Geometry, }North-Holland, 1975.

\bibitem{CheegGrom} J. Cheeger and D. Gromoll, \emph{On the structure of
complete manifolds of nonnegative curvature}, Ann. of Math. \textbf{96}
(1972), 413--443.

\bibitem{Grom1} M. Gromov, \emph{Curvature, diameter and betti numbers},
Comment. Math. Helv. \textbf{56} (1981), 179-195.

\bibitem{Grom 2} M. Gromov, \emph{Metric structures for Riemannian and
non-Riemannian spaces}, Progress in Mathematics, \textbf{152}, Birkh\"{a}%
user, 1999. Based on the 1981 French original [ MR0682063 (85e:53051)], with
appendices by M. Katz, P. Pansu and S. Semmes. Translated from the French by
Sean Michael Bates.

\bibitem{GrovShio} K. Grove and K. Shiohama, \emph{A generalized sphere
theorem}, Ann. of Math. \textbf{106} (1977), 201-211.

\bibitem{Herz} B. Herzog, \emph{The sub-index of critical points of distance
functions: 3-D torus example, }%
https://sites.google.com/site/frederickhwilhelmjr/slides/sub-index/3-D-torus

\bibitem{Plaut} C. Plaut, \emph{Metric spaces of curvature $\geq k$}, \emph{%
Handbook of geometric topology}, Elsevier Science B.V. (2002), 819-898.

\bibitem{Pet} P. Petersen, \emph{Riemannian Geometry,} $2^{nd}$ Ed. New
York: Springer Verlag, 2006.
\end{thebibliography}
\end{document}